\newtheorem{theorem}{Theorem}[section]
\newtheorem{corollary}{Corollary}
\newtheorem{lemma}[theorem]{Lemma}
\newtheorem{remark}{Remark}
\numberwithin{equation}{section}\allowdisplaybreaks
\begin{document}

\title{\large\bf  Dispersive  blow-up for solutions to three dimensional generalized Zakharov-Kuznetsov equations}
\author{\normalsize \bf Yingzhe Ban$^{a}$ and \bf Minjie  Shan$^{b,*}$\\
\footnotesize
\it $^a$ Institute of Applied Physics and Computational Mathematics, Beijing 100088, P.R. China, \\
\footnotesize
\it $^b$ College of Science, Minzu University of China, Beijing 100081, P.R. China. \\  \\ \footnotesize
\it E-mails: banyingzhe22@gecaep.ac.cn, smj@muc.edu.cn   \\
} 
\date{} \maketitle
\thispagestyle{empty}

\begin{abstract}
We illustrate the dispersive blow up phenomena of the solutions of three dimensional generalized Zakharov-Kuznetsov equations. In particular, we construct smooth initial data such that, the associated global solutions fail to be $C^{1}$ at time $t$ in a null set containing all rational numbers, but are $C^{1}$ at all times $t$ which are generic irrational numbers. The key ingredient are to construct linear solutions which exhibit such phenomena and to prove nonlinear smoothing estimates for the full nonlinear model.
\\
{\bf Keywords:}  Zakharov-Kuznetsov equations, dispersive blow-up, well-posedness, weighted Sobolev spaces 
 \\
{\bf MSC 2020:}  primary 35Q53; secondary 35B44 
\end{abstract}
\section{Introduction}
\subsection{Statement of main results}
We consider the Cauchy problem for three dimensional  generalized Zakharov-Kuznetsov equations (gZK)
\begin{equation}
	\left\{
	\begin{aligned}
		&\partial_{t}u +\partial_{x}\Delta u +u^k\partial_{x} u = 0, \quad \\
		&u(0,x,y)=u_0(x,y),\ \ \ (x,y)\in \mathbb{R}\times\mathbb{R}^2, \ t\in\mathbb{R} \label{gZK} \\
	\end{aligned}
	\right.
\end{equation}
 where $k\in\mathbb{N}^+$. We will mainly focus on the case $k=1$, which is most physically relevant.
 
 Our main result is the existence of dispersive blow up solution to \eqref{gZK}. 
 \begin{theorem}\label{thm: main}
 Consider \eqref{gZK} for the case $k=1$, there exists smooth initial data $u_{0}\in C^{\infty}(\mathbb{R}^{3})$, such that the associated solution is global and one has 
 \begin{equation}
 \begin{cases}
 u(t)\in C^{1}(\mathbb{R}^{3}), \quad t>0, t\in \mathbb{R}/X,\\
 u(t)\notin C^{1}(\mathbb{R}^{3}), \quad t>0, t\in X.
 \end{cases}\nonumber
 \end{equation}
where $X$ is a measure zero set containing all rational numbers and $\mathbb{R}/X$ contains all generic irrational numbers.
 \end{theorem}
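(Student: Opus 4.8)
The plan is to reduce the nonlinear statement to a dispersive blow-up phenomenon for the linear group and then to argue that the Duhamel correction is too regular to disturb it. Write $S(t)=e^{it\omega(D)}$ for the propagator of the linearization $\partial_t u+\partial_x\Delta u=0$, whose Fourier multiplier is $e^{it\omega(\zeta)}$ with the anisotropic phase $\omega(\zeta)=\xi(\xi^2+|\eta|^2)$, $\zeta=(\xi,\eta)\in\mathbb{R}\times\mathbb{R}^2$. The first task is a single-time linear blow-up. Fix $t^\ast>0$ and a smooth symbol $g$ with $g(\zeta)\sim|\zeta|^{-\mu}$ at infinity, where $\mu$ is tuned so that $\psi:=\mathcal{F}^{-1}g$ is a Bessel-type potential that is continuous but fails to be $C^1$ at the origin. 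Setting $\widehat{u_0}(\zeta)=e^{-it^\ast\omega(\zeta)}g(\zeta)$ gives $S(t)u_0=\mathcal{F}^{-1}\!\big[e^{i(t-t^\ast)\omega}g\big]$, so that $S(t^\ast)u_0=\psi\notin C^1$. The crux is then a smoothing lemma in the spirit of Bona--Saut: for $t\neq t^\ast$ the surviving oscillation $e^{i(t-t^\ast)\omega(\zeta)}$ has enough curvature to promote the slow decay of $g$ to smoothness, so $S(t)u_0\in C^\infty(\mathbb{R}^3)$; I would prove this by stationary phase / van der Corput estimates for $\omega$, also recording the quantitative degeneration $\|S(t)u_0\|_{C^1}\lesssim|t-t^\ast|^{-\alpha}$ as $t\to t^\ast$.

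Next I would superpose these profiles over the rationals to populate the singular set. Enumerate $\mathbb{Q}_{>0}=\{t_n\}$ and put $u_0=\sum_n c_n\,\mathcal{F}^{-1}\!\big[e^{-it_n\omega}g\big]$, where $c_n>0$ is chosen diagonally, i.e.\ summable against $\|\mathcal{F}^{-1}[e^{-it_n\omega}g]\|_{C^n}$, so that $u_0\in C^\infty(\mathbb{R}^3)$ lies in the weighted Sobolev class required by the well-posedness theory. Then $S(t)u_0=\sum_n c_n\,\mathcal{F}^{-1}[e^{i(t-t_n)\omega}g]$, and I claim the following. At a rational $t=t_m$ the term $n=m$ equals $c_m\psi\notin C^1$, while the rapid decay of $c_n$ dominates the $C^1$-growth of the near-resonant terms, so $\sum_{n\neq m}c_n\,\mathcal{F}^{-1}[e^{i(t_m-t_n)\omega}g]$ converges in $C^1$; hence $S(t_m)u_0\notin C^1$. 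At an irrational $t$ every factor is smooth and $\sum_n c_n\|\mathcal{F}^{-1}[e^{i(t-t_n)\omega}g]\|_{C^1}\lesssim\sum_n c_n|t-t_n|^{-\alpha}$, which converges unless $t$ is abnormally well approximated by the $t_n$. A Borel--Cantelli argument shows that the exceptional set $X$ on which the series diverges is a meager, Lebesgue-null set; by construction $X\supseteq\mathbb{Q}_{>0}$, and its complement is a dense $G_\delta$ of full measure made of the generic irrationals on which $S(t)u_0\in C^1$.

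It remains to pass from the linear group to the actual solution. By the global well-posedness theory for $k=1$ in weighted Sobolev spaces, the smooth decaying datum $u_0$ generates a global solution, and Duhamel's formula writes $u(t)=S(t)u_0-\int_0^tS(t-s)\big(u\,\partial_x u\big)(s)\,ds=:S(t)u_0+z(t)$. The decisive ingredient is a nonlinear smoothing estimate showing that the bilinear term $z(t)$ is strictly smoother than the data, in particular $z(t)\in C^1(\mathbb{R}^3)$ for every $t$. Granting it, the $C^1$ regularity of $u(t)$ is exactly that of $S(t)u_0$, and the dichotomy of the previous paragraph furnishes the set $X$ claimed in the theorem.

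The main obstacle is this last step: proving the nonlinear smoothing for the full three-dimensional model, that is, controlling $\int_0^t S(t-s)\big(u\,\partial_x u\big)\,ds$ in $C^1$ uniformly and globally in time. This requires multilinear dispersive estimates (local smoothing, Strichartz, and bilinear refinements) adapted to the anisotropic, partially degenerate phase $\omega$---the loci $\xi=0$ and the vanishing of the relevant curvature being the delicate points---together with persistence of the weighted regularity along the flow. A secondary difficulty is making the Diophantine bookkeeping of the superposition step precise enough to guarantee that $X$ is simultaneously null, meager, and a superset of $\mathbb{Q}_{>0}$.
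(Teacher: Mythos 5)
Your architecture is exactly the paper's: a single-profile linear dispersive blow-up with a quantitative smoothing rate away from the singular time, a superposition over an enumeration of $\mathbb{Q}^{+}$ with rapidly decaying coefficients, and a nonlinear smoothing estimate for the Duhamel term that transfers the dichotomy to the full equation. Even your choice of profile essentially coincides with the paper's: with $\mu=4$ your Bessel-type $\psi$ is (up to constants) the paper's $\varphi=e^{-2\sqrt{x^2+|y|^2}}$, and your $\mathcal{F}^{-1}[e^{-it_n\omega}g]$ is the paper's $W(-j/k)\varphi$ in \eqref{initval000}. The one genuine methodological difference is in how the linear smoothing for $t\neq t^{*}$ is proved: you propose nonstationary phase (which is viable here, since $\partial_\xi\omega=3\xi^2+|\eta|^2\gtrsim|\zeta|^2$ never vanishes away from the origin, even though the Hessian of $\omega$ degenerates on $\xi=0$ and on $3\xi^2=|\eta|^2$), whereas the paper conjugates by the exponential weight $e^{x+y_1+y_2}$ so that the conjugated propagator acquires an explicit Gaussian factor $e^{-t(\xi^2+(\eta_1+\xi)^2+(\eta_2+\xi)^2)}$, yielding the clean quantitative rate $t^{-|\alpha|/2}e^{3t}$ of Lemma \ref{linbuplema} that makes the double sum over $(j,k)$ converge painlessly.

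The genuine gaps are precisely the two points you defer. First, the set $X$ is not produced by a Borel--Cantelli argument in the paper: the theorem requires $\mathbb{R}\setminus X$ to contain \emph{all} generic irrationals in the Diophantine sense \eqref{generic0}, and the paper gets this directly by pairing the explicit weights $e^{-e^{k}}e^{-j^2}$ with the lower bound $|t-j/k|\gtrsim(|j|+|k|)^{-3}$; your ``diagonal'' choice of $c_n$ against an abstract enumeration gives a null exceptional set but does not by itself identify its complement with the generic irrationals, so this bookkeeping must be done with the $(j,k)$-indexing rather than waved at. Second, and more seriously, the nonlinear smoothing statement you ``grant'' is the paper's main technical content (Theorem \ref{ZKnonlinSmooth}): in 3D the equation cannot be symmetrized, there are no Strichartz estimates in the $y$-directions, and a single application of the dual local smoothing bound \eqref{linear estimate:Kato-2} leaves terms like $\langle x\rangle^{1/2+}u\,D_x^{3/2+}u_x$ that exceed the $H^{5/2-}$ regularity of the solution. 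The paper resolves this by iterating Duhamel's formula inside the estimate and by commuting fractional weights with the propagator (Lemmas \ref{lem:weightZK} and \ref{lem:weight-error}, extended to weights of size $\langle x\rangle^{\alpha+1/2+}$), none of which is present in your outline; without it the reduction of $u(t)$ to $W(t)u_0$ modulo $C^1$ is unproved.
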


Here, we say an (irrational) number $\gamma$ is generic if it cannot be well approximated by rational numbers in the sense 
\begin{equation}\label{generic0}
|k_{1}+k_{2}\gamma|\gtrsim \frac{1}{(|k_{1}|+|k_{2}|)\ln (|k_{1}|+|k_{2}|)}, \quad \forall k_{1},k_{2}\in \mathbb{Z}. 
\end{equation}
We refer to Definition 1.1 in \cite{DGG17} for more details and we note that it is known that irrational numbers are almost surely generic irrational. One may also compare \eqref{generic0} with the Dirichlet approximation theorem.\\

One should not confuse dispersive blow up with those (dynamical) finite time blow up. Dispersive blow up, in general, means the solution of a certain dispersive equation loses regularity at certain time, but the flow can still be continued after those times. Most dispersive blow up results for nonlinear linear dispersive equations, including the one in this article, are the consequence of following two facts.
\begin{enumerate}
\item The associated linear solution exhibits dispersive blow up phenomena.
\item The Duhamel part of the nonlinear solutions exhibits nonlinear smoothing phenomena.
\end{enumerate}
The first one is of constructive nature, and the second one is obtained by refined analysis of local well-posedness.

Theorem \ref{thm: main} will essentially follows from the following two Theorems. The first one regards the construction of linear solution.
\begin{theorem}\label{thm: linear}
There exists $u_{0}\in C^{\infty}(\mathbb{R}^{3})$, such that 
\begin{equation}
\begin{cases}
W(t)u_{0}\in C^{\infty}(\mathbb{R}^{3}), \quad t>0, t\in \mathbb{R}/X,\\
W(t)u_{0}\notin C^{1}(\mathbb{R}^{3}), \quad \  t>0, t\in X.
\end{cases}\nonumber
\end{equation}
where $X$ is a measure zero set containing all rational numbers and $\mathbb{R}/X$ contains all generic irrational numbers.
\end{theorem}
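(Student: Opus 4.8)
The plan is to realize the linear dispersive blow-up by superposing, over a dense countable family of ``focusing times,'' copies of a single mildly singular profile whose dispersion has been run backwards in time. Diagonalizing $W(t)$ in the Fourier variables $(\xi,\eta)\in\mathbb{R}\times\mathbb{R}^{2}$ gives
\begin{equation}
W(t)f=\mathcal{F}^{-1}\!\left[e^{it\phi(\xi,\eta)}\,\widehat{f}\,\right],\qquad \phi(\xi,\eta)=\xi^{3}+\xi|\eta|^{2}.\nonumber
\end{equation}
I would fix a radial profile $\Phi$ with $\widehat{\Phi}(\zeta)=\langle\zeta\rangle^{-s}$ for $s$ in the range making $\Phi\in C^{0}(\mathbb{R}^{3})\setminus C^{1}(\mathbb{R}^{3})$ (so $s$ slightly below $4$), enumerate the positive rationals as $\{t_{j}=p_{j}/q_{j}\}_{j\ge1}$, and set
\begin{equation}
u_{0}=\sum_{j\ge1}\lambda_{j}\,W(-t_{j})\Phi ,\nonumber
\end{equation}
with weights $\lambda_{j}>0$ decaying fast in $q_{j}$, to be fixed at the end.

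The first step is a quantitative dispersive smoothing estimate: for $\tau\neq0$ and every $k$,
\begin{equation}
\|W(\tau)\Phi\|_{C^{k}(\mathbb{R}^{3})}\lesssim_{k}|\tau|^{-\sigma(k)}\nonumber
\end{equation}
for some $\sigma(k)>0$; that is, the flow turns the mildly singular $\Phi$ into a $C^{\infty}$ function with at most polynomial blow-up of its $C^{k}$ norms as $\tau\to0$. Granting this, $u_{0}\in C^{\infty}$ because each summand is smooth and $\sum_{j}\lambda_{j}\|W(-t_{j})\Phi\|_{C^{k}}<\infty$ once $\lambda_{j}$ beats $|t_{j}|^{-\sigma(k)}$. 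At a rational time $t=t_{j_{0}}$ one has $W(t_{j_{0}})u_{0}=\lambda_{j_{0}}\Phi+\sum_{j\neq j_{0}}\lambda_{j}W(t_{j_{0}}-t_{j})\Phi$; distinct rationals satisfy $|t_{j_{0}}-t_{j}|\ge(q_{j_{0}}q_{j})^{-1}$, so the remainder sum converges in $C^{1}$ while the isolated term $\lambda_{j_{0}}\Phi$ fails to be $C^{1}$ at the origin. Hence $W(t)u_{0}\notin C^{1}$ for every rational $t>0$, forcing $\mathbb{Q}_{>0}\subset X$.

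The third step is smoothness at generic irrational times. For such $t$ every $t-t_{j}$ is nonzero, so each summand of $W(t)u_{0}=\sum_{j}\lambda_{j}W(t-t_{j})\Phi$ is $C^{\infty}$; the only issue is convergence in $C^{k}$ as $t_{j}\to t$. Here the genericity \eqref{generic0} enters: taking $k_{1}=-p_{j}$, $k_{2}=q_{j}$ yields $|t-t_{j}|\gtrsim(q_{j}^{2}\ln q_{j})^{-1}$, whence $\|W(t-t_{j})\Phi\|_{C^{k}}\lesssim(q_{j}^{2}\ln q_{j})^{\sigma(k)}$ and the series converges in every $C^{k}$; thus $W(t)u_{0}\in C^{\infty}$. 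The exceptional set $X$ is then the union of $\mathbb{Q}_{>0}$ with the Liouville-type numbers violating \eqref{generic0}, a null set whose complement contains all generic irrationals, and choosing $\lambda_{j}$ to decay fast enough in $q_{j}$ makes all the above series converge simultaneously.

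The main obstacle is the smoothing estimate of the second step, since the gZK phase is genuinely degenerate in three dimensions: its Hessian satisfies
\begin{equation}
\det\nabla^{2}\phi(\xi,\eta)=8\,\xi\,(3\xi^{2}-|\eta|^{2}),\nonumber
\end{equation}
which vanishes on the hyperplane $\xi=0$ and on the cone $3\xi^{2}=|\eta|^{2}$, so the ordinary stationary/non-stationary phase dichotomy breaks down exactly there. The heart of the argument will be to decompose frequency space away from and near this degeneracy, exploiting the non-vanishing of the group velocity $\nabla\phi=(3\xi^{2}+|\eta|^{2},2\xi\eta)$ off the origin together with van der Corput estimates adapted to the degenerate directions, in order to extract the quantitative decay $|\tau|^{-\sigma(k)}$. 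By contrast, the superposition and the Diophantine bookkeeping are routine once this smoothing is in hand.
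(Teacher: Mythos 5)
Your construction has the same architecture as the paper's: the authors take the conical profile $\varphi=e^{-2\sqrt{x^{2}+y_{1}^{2}+y_{2}^{2}}}$ and set $u_{0}=\sum_{k}\sum_{j,\gcd(j,k)=1}e^{-e^{k}}e^{-j^{2}}W(-j/k)\varphi$, then run exactly your three steps (smoothing of $W(\tau)\varphi$ for $\tau\neq0$, isolation of the singular term at rational times, Diophantine control of the series at generic irrational times). The problem is that your load-bearing step is only asserted, and in the form you state it it is false. The global norm $\|W(\tau)\Phi\|_{C^{k}(\mathbb{R}^{3})}$ is infinite once $k$ is large: writing $\partial^{\alpha}W(\tau)\Phi(X)=\int e^{i\tau\phi(\zeta)+iX\cdot\zeta}(i\zeta)^{\alpha}\langle\zeta\rangle^{-s}\,d\zeta$ and choosing $X=-\tau\nabla\phi(\zeta_{0})$, stationary phase at $\zeta_{0}$ produces a contribution of size about $|\tau|^{-3/2}|\zeta_{0}|^{|\alpha|-s-3/2}$, which tends to infinity with $|\zeta_{0}|$ (equivalently with $|X|$) as soon as $|\alpha|>s+3/2$. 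This is the three-dimensional analogue of the fact that $\mathrm{Ai}^{(k)}(x)\sim|x|^{k/2-1/4}$ as $x\to-\infty$: dispersed profiles are smooth but their derivatives grow polynomially at spatial infinity. Since your convergence arguments ($u_{0}\in C^{\infty}$, convergence of the remainder sum at rational times, convergence in $C^{k}$ at generic times) all sum precisely these norms, the proof does not close as written. A secondary slip: the weights must decay in both $p_{j}$ and $q_{j}$ (for fixed denominator there are infinitely many numerators); the paper uses $e^{-e^{k}}e^{-j^{2}}$.

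The gap is repairable in two ways, and the comparison is instructive. One fix is to localize: $C^{\infty}(\mathbb{R}^{3})$ is a local property, and for $X$ in a fixed compact set the stationary points of $\tau\phi+X\cdot\zeta$ are confined to $|\zeta|\lesssim(|X|/|\tau|)^{1/2}$, while outside that ball one has $|\partial_{\xi}\phi|=3\xi^{2}+|\eta|^{2}\geq|\zeta|^{2}$, so repeated non-stationary-phase integration by parts gives $\|W(\tau)\Phi\|_{C^{k}(K)}\lesssim_{K,k}|\tau|^{-\sigma(k)}$. Note that the degeneracy of $\det\nabla^{2}\phi$ on $\{3\xi^{2}=|\eta|^{2}\}$, which you identify as the heart of the difficulty, is irrelevant here: only the lower bound on the \emph{gradient} of the phase is needed, and it holds everywhere. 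The paper avoids oscillatory integrals entirely by conjugating with the exponential weight $e^{\pm(x+y_{1}+y_{2})}$: the multiplier for $w=e^{x+y_{1}+y_{2}}W(t)\varphi$ acquires the real damping factor $e^{-t(\xi^{2}+(\eta_{1}+\xi)^{2}+(\eta_{2}+\xi)^{2})}e^{3t}$, whence
\begin{equation}
\bigl\|\partial^{\alpha}e^{x+y_{1}+y_{2}}W(t)\varphi\bigr\|_{L^{2}}\lesssim t^{-|\alpha|/2}e^{3t}\bigl\|e^{x+y_{1}+y_{2}}\varphi\bigr\|_{L^{2}},\qquad t>0,\nonumber
\end{equation}
with the opposite weight for $t<0$; the polynomial growth of derivatives at spatial infinity is absorbed into the weight, and Sobolev embedding then gives $C^{\infty}$ membership together with the quantitative $t^{-|\alpha|/2}$ blow-up rate your bookkeeping requires. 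Either repair works, but you must actually prove the smoothing lemma in a form whose norms are finite before the superposition argument can be run.
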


See \eqref{initval000} for the explicit construction of $u_{0}$, we note that $u_{0}$ is both smooth and localized.\\

Here we use $W(t):=e^{-t\partial_{x}\Delta}$ to denote the linear propagator for the linear part of \eqref{gZK}. By Duhamel formula, the solution  to \eqref{gZK} can be written as 
\begin{equation}
u(t)=W(t)u_{0}+\int_{0}^{t}W(t-t')(u\partial_{x}u)(t')dt'.\nonumber
\end{equation}

\begin{theorem} \label{lwp}
	Let $r_1,r_2\in(0, 1)$ and $s\in (2,3)$. Assume that $u_0\in Z_{s,(r_1,r_2)}$, then there exists a positive time $T=T(\|u_0\|_{Z_{s,(r_1,r_2)}})>0$, such that \eqref{gZK} with $k=1$ has a unique solution
	$$u(t,x,y)\in C\big([0,T]; Z_{s,(r_1,r_2)})$$
which depends continuously upon  $u_0$. Furthermore, if one imposes that $u_{0}$ is small enough in $Z_{s,(r_1,r_2)}$, then one can choose $T=\infty$, i.e. the solution is global.
\end{theorem}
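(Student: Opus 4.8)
The plan is to solve \eqref{gZK} with $k=1$ by a contraction mapping argument for the Duhamel operator
\[
\Phi(u)(t) = W(t)u_0 + \int_0^t W(t-t')\,(u\partial_x u)(t')\,dt',
\]
on a closed ball of a resolution space built over $[0,T]$. Since the nonlinearity $u\partial_x u=\tfrac12\partial_x(u^2)$ carries a full $x$-derivative and the range $s\in(2,3)$ does not grant an unconditional embedding $\partial_x u\in L^\infty$ (that would need $s>5/2$), a pure energy/algebra estimate cannot close. I would therefore enlarge the $C([0,T];Z_{s,(r_1,r_2)})$ norm by a package of dispersive auxiliary norms — a Kato-type local smoothing norm recovering one $x$-derivative, together with maximal-function and Strichartz norms — so that the derivative produced by the nonlinearity is paid for by the smoothing gain of $W(t)$, while the algebra property of $H^s$ ($s>3/2$) and the embedding $H^s\hookrightarrow L^\infty$ handle the undifferentiated factor.

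First I would assemble the linear estimates for $W(t)=e^{-t\partial_x\Delta}$, whose Fourier symbol is $e^{it\xi(\xi^2+|\eta|^2)}$. The unweighted bounds are standard for this $L^2$-subcritical three dimensional flow: $L^2$-conservation, local smoothing of the form $\|\partial_x^{1/2}W(t)u_0\|_{L^\infty_xL^2_{ty}}\lesssim\|u_0\|_{L^2}$, its dual retarded version for the Duhamel term, and the maximal/Strichartz estimates. The delicate ingredient is the \emph{weighted} estimate controlling $\langle x\rangle^{r_1}W(t)u_0$ and $\langle y\rangle^{r_2}W(t)u_0$ in $L^2$ (these weights being imposed separately in $Z_{s,(r_1,r_2)}$). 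Viewing a weight as a Fourier multiplier, $x$ acts as $i\partial_\xi$ and meets the phase through $\partial_\xi[\xi(\xi^2+|\eta|^2)]=3\xi^2+|\eta|^2$, while $y_j$ produces $2\xi\eta_j$; thus each integer weight commuted through $W(t)$ costs a factor of $t$ times a \emph{second-order} multiplier. This is exactly why fractional weights $r_1,r_2\in(0,1)$ are used: interpolating (via Stein–Weiss and the truncated-weight device of Nahas–Ponce and Fonseca–Linares–Ponce) these identities against pure $L^2$-conservation yields
\[
\|\langle x\rangle^{r_1}W(t)u_0\|_{L^2}\lesssim (1+|t|)^{r_1}\big(\|u_0\|_{H^{2r_1}}+\|\langle x\rangle^{r_1}u_0\|_{L^2}\big),
\]
and analogously for $\langle y\rangle^{r_2}$ with $H^{2r_2}$; since $2r_1,2r_2<2<s$, both are dominated by $\|u_0\|_{Z_{s,(r_1,r_2)}}$.

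With these in hand I would run the nonlinear estimates on $\Phi$. The $H^s$ component is closed by the fractional Leibniz/Kato–Ponce rule to bound $\|\partial_x(u^2)\|_{H^{s-1}}$ and by feeding the $x$-derivative into the retarded smoothing estimate, the remaining factors being absorbed by the maximal-function, Strichartz, and $C_TH^s$ norms. For the weighted component one commutes $\langle x\rangle^{r_1}$ (resp. $\langle y\rangle^{r_2}$) into the Duhamel integral, applies the weighted linear bound above slicewise in $t$, and distributes the fractional weight across $u\cdot\partial_x u$ by a fractional Leibniz rule for weights; every resulting term is of the shape (weight on one factor)$\times$($H^s$ or smoothing norm of the other). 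Choosing $T$ small makes the nonlinear contribution a strict contraction on the ball, which gives existence, uniqueness, and — from the analogous difference estimate — continuous dependence on $u_0$.

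The main obstacle is precisely this weighted estimate and its nonlinear counterpart: because $W(t)$ does not commute with the spatial weights, each unit of weight forces a loss of two derivatives (with a $t$-growing factor), and one must verify that the fractional weights together with $s\in(2,3)$ leave enough regularity budget after the nonlinear $\partial_x$ has already been discharged by local smoothing. For the global statement, smallness of $\|u_0\|_{Z_{s,(r_1,r_2)}}$ turns the quadratic Duhamel term into a self-improving bound: the $L^2$-subcritical structure lets a bootstrap keep the (unweighted) $H^s$-type norm small for all time, after which the weighted norm is seen to remain finite on every bounded interval (growing at most polynomially in $t$), so the local solution continues to $T=\infty$.
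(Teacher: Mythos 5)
Your proposal follows essentially the same route as the paper: a contraction for the Duhamel map on a ball of a resolution space combining $C([0,T];Z_{s,(r_1,r_2)})$ with Kato local-smoothing, maximal-function and Strichartz norms (the paper's space $X_T$), with the key weighted ingredient being exactly the Fonseca--Linares--Ponce-type commutation of the fractional weights $|x|^{r_1},|y_j|^{r_2}$ through $W(t)$ at the cost of $2r_i<s$ derivatives and a factor growing in $t$ (the paper's Lemma \ref{lem:weightZK} and Lemma \ref{lem:weight-error}), and with the derivative in $u\partial_xu$ discharged against the smoothing norms exactly as you describe. This matches the paper's proof in all essential respects, so no further comparison is needed.
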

Here $Z_{s, (r_{1},r_{2})}$ denotes the fractional weighted Sobolev spaces by
\begin{equation}
Z_{s,(r_{1},r_{2})}=H^{s}(\mathbb{R}^{3}) \cap L^{2}((|x|^{2r_{1}}+|y|^{2r_{2}})dxdy), \nonumber
\end{equation}
where $s,r_{1},r_{2}\in \mathbb{R}$.

The second one is on  nonlinear smoothing estimate.

\begin{theorem} \label{ZKnonlinSmooth}
Let $r_1,r_2\in(0, 1)$. Assume that
 $$u_0 \in  C^{\infty}(\mathbb{R}^3) \cap \bigcap\limits_{s\in \left[2,\frac52\right)} Z_{s,(r_1,r_2)} $$ 
such that the solution  $u(t)$ of \eqref{gZK} with $k=1$ is global in time, then the Duhamel term
	\begin{equation}
		\begin{aligned}
			z_1(t):=u(t)- W(t)u_0=\int_0^t W(t-t')(u\partial_x u)(t')dt'
			\nonumber
		\end{aligned}
	\end{equation}
	is in $C^1(\mathbb{R}^3)$ for all $t>0$.
\end{theorem}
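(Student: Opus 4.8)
The plan is to show that the Duhamel term gains enough regularity to embed into $C^1$. Since in three dimensions $H^{5/2+\ep}(\mathbb{R}^3)\hookrightarrow C^1(\mathbb{R}^3)$ for any $\ep>0$, it suffices to prove that $z_1(t)\in H^{5/2+\ep}$ for some small $\ep>0$ and every $t>0$. The hypotheses guarantee, through Theorem \ref{lwp} together with persistence of the $Z_{s,(r_1,r_2)}$--regularity, that $u(t)\in H^s\cap L^2((|x|^{2r_1}+|y|^{2r_2})\,dxdy)$ for every $s<5/2$; the weighted information is what will ultimately be converted into the extra derivatives, since the pointwise regularity alone only yields $H^{5/2-}$ for $u$ and hence, after the $\partial_x$ in the nonlinearity, a mere $H^{3/2-}$ for $z_1$ without any smoothing.

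First I would pass to the interaction representation. Writing $u(t)=W(t)v(t)$ and using $u\partial_x u=\tfrac12\partial_x(u^2)$, the Duhamel term becomes, on the Fourier side,
\begin{equation}
\widehat{W(-t)z_1}(t,\zeta)=\frac{i\xi}{2}\int_0^t\!\!\int e^{-it'\Omega(\zeta,\zeta_1)}\,\widehat v(t',\zeta_1)\,\widehat v(t',\zeta-\zeta_1)\,d\zeta_1\,dt',
\nonumber
\end{equation}
with $\zeta=(\xi,\eta)$ and resonance function
\begin{equation}
\Omega(\zeta,\zeta_1)=\phi(\zeta)-\phi(\zeta_1)-\phi(\zeta-\zeta_1)=3\xi\xi_1\xi_2+\xi_1|\eta_2|^2+\xi_2|\eta_1|^2+2\xi\,\eta_1\cdot\eta_2,
\nonumber
\end{equation}
where $\phi(\xi,\eta)=\xi^3+\xi|\eta|^2$ and $\zeta_2=\zeta-\zeta_1$. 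A Littlewood--Paley decomposition reduces matters to a frequency-localized bilinear estimate for the output piece at frequency $|\zeta|\sim N$, and the aim is a net gain of strictly more than one derivative relative to $u$, uniformly in $N$ and summable in $N$.

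The core dichotomy is resonant versus non-resonant interactions. On the non-resonant set, where $|\Omega|$ is comparable to the largest cubic power of the frequencies (e.g.\ $|\Omega|\gtrsim N^3$ in the generic regime where all three frequencies are $\sim N$), I would integrate by parts in $t'$ (a normal-form / Poincar\'e--Dulac transformation), producing a factor $\Omega^{-1}$ and hence a gain of about two derivatives---far more than needed. The price is a boundary term, handled by the same bilinear estimate at the endpoints, and a term containing $\partial_{t'}v$; replacing $\partial_t u$ by $-\partial_x\Delta u-u\partial_x u$ via \eqref{gZK} yields one manageable bilinear contribution and one trilinear contribution, the latter requiring an additional but softer multilinear estimate.

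The main obstacle, and the place where the weights are essential, is the degenerate locus $\{\Omega\approx 0\}$, on which the time integration by parts is unavailable. Here $\Omega$ vanishes along thin, anisotropic subvarieties: either where $\xi_1$ or $\xi_2$ is small (the high$\,\times\,$low interactions, where the multiplier $\xi\,\Omega^{-1}$ degenerates to a gain of only one derivative and deteriorates further as the low frequency tends to the origin), or where the transverse quadratic part cancels the cubic term $3\xi\xi_1\xi_2$. The plan is to exploit that the bound $u(t)\in L^2((|x|^{2r_1}+|y|^{2r_2})\,dxdy)$ is equivalent to fractional differentiability of order $r_1$ in $\xi$ and $r_2$ in $\eta$ of $\widehat u$, giving H\"older-type continuity of the profile $\widehat v$ in the Fourier variables; this controls $\int \widehat v(\zeta_1)\widehat v(\zeta_2)$ over the small-measure resonant slab and over the near-origin region, compensating quantitatively for the absent dispersive gain. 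Matching the anisotropic exponents $(r_1,r_2)$ to the anisotropic geometry of the degeneracy of $\Omega$, and verifying that the resulting gain still exceeds one derivative after summation in $N$, is the delicate step; assembling the resonant and non-resonant contributions then yields $z_1(t)\in H^{5/2+\ep}$ and therefore $z_1(t)\in C^1(\mathbb{R}^3)$ for all $t>0$.
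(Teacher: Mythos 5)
Your overall reduction (prove $z_1(t)\in H^{5/2+}(\mathbb{R}^3)$ and conclude by Sobolev embedding, the point being a gain of $1+$ derivatives over the naive $H^{3/2-}$ regularity of $u\partial_x u$) coincides with the paper's, and your resonance function $\Omega$ is computed correctly. But the mechanism you propose for the gain --- normal forms on the non-resonant set plus Fourier-side regularity of $\widehat{u}$ coming from the weights on the resonant set --- is a genuinely different route, and its decisive step is missing. The variety $\{\Omega=0\}$ for 3D ZK is a codimension-one set that meets the regime where all three frequencies are comparable and of size $N$ (for instance $\xi_1=\xi_2=\xi/2\sim N$ with $\eta_1\cdot\eta_2\approx -\tfrac34\xi^2-\tfrac12|\eta|^2$); there the multiplier $\xi$ costs a full derivative, integration by parts in $t'$ is unavailable, and the entire required gain of $2+$ derivatives must come from the smallness of the slab $\{|\Omega|\lesssim \lambda\}$ combined with $D_\xi^{r_1}\widehat{u},\,D_\eta^{r_2}\widehat{u}\in L^2$ with $r_1,r_2<1$. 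You do not quantify this, and it is doubtful it closes: one-dimensional fractional $L^2$-regularity of order $r_1<1$ in $\xi$ does not even yield pointwise H\"older control of $\widehat{u}$ unless $r_1>1/2$, and the measure of the resonant slab (after Cauchy--Schwarz in $\zeta_1$) does not by itself produce more than one derivative of gain uniformly over the high-high-high interactions. The boundary terms and the $\partial_{t'}v$ terms of the normal form also generate trilinear expressions for which you assert, but do not supply, the needed multilinear estimates. As written, this is a program with its central estimate unproven, not a proof.

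For comparison, the paper avoids any resonance analysis: it gains one derivative per application of the local smoothing effect \eqref{linear estimate:Kato-2}--\eqref{linear estimate:Kato-3}, applied twice through a Duhamel iteration (once for $D_x^{5/2+}z_1$, and once more to control the resulting worst factor $\|D_x^{3/2+}u_x\|_{L^\infty_xL^2_{yT}}$), and it uses the weights purely in physical space, to convert the $L^1_xL^2_{yT}$ norm demanded by the smoothing estimate into an $L^2_{xyT}$ norm via the factor $\langle x\rangle^{1/2+}$. The weighted information is then propagated by Lemma \ref{lem:weightZK} and Lemma \ref{lem:weight-error}, and combined with the weighted Kato--Ponce inequality, the interpolation inequality \eqref{lem:interpolaion1}, the maximal function estimate \eqref{linear estimate:maximal} and the Strichartz estimate. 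If you wish to pursue the Fourier-side route you would need, at a minimum, a bilinear estimate on the resonant set giving a gain strictly exceeding one derivative in the high-high-high regime; this is exactly what is not available for ZK, which is why both this paper and its 2D predecessor proceed through local smoothing rather than normal forms.
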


\begin{remark}
This theorem  indicates that the Duhamel term corresponding to the nonlinear equation \eqref{gZK} is  more regular than the linear term. The dispersive blow-up phenomenon is caused by singularities from the linear component part of the equation.
\end{remark}

Compared with previous results, our solutions to the 3D gZK equations, (also for the associated linear solution), display dispersive blow up at all rational time, which is dense rather than discrete in $\mathbb{R}^{+}$.  The nonlinear smoothing estimate for 3D gZK equations in the current article is also quite different compared with 2D case handled in 
\cite{2020_Linares_blowupforZK}, since gZK equations can not be symmetrized in 3D,  and associated Strichartz estimates in $y$-direction  no longer hold. We need to combine  the local smooth effect estimate and Duhamel's principle through multiple iterations. 

 Specifically, in order to show the Duhamel term lies in $H^{5/2+}(\mathbb{R}^3)$, by using the local smooth effect estimate (\ref{linear estimate:Kato-2}), we need to control the $L^1_xL^2_{yT}$ norms of $D_x^{3/2+}(uu_x)$ and $D_{y_j}^{3/2+}(uu_x)$.  By adding weight $\left<x\right>^{\frac12+}$,  we can  convert $L^1_x$ norm to  $L^2_x$ norm. Then, it can be found that the highest derivative terms will be $D^{3/2+} _xu_x$ and $D^{3/2+} _{y_j}u_x$, but the regularity for solutions is $5/2-$. For the sake of higher regularity, we expect to utilize the local smooth effect estimate again. In three dimensional case, we can not directly use Strichartz estimate  in $L^{\infty}_{xy}$-norm.  In fact, $L_T^2L^{\infty}_{xy}$ estimate holds provided that there are  derivatives in $x$-direction (see \eqref{linear estimate: strichartz cor1}). If we try to use Strichartz estimates to control $\|D_x^{5/2+}u\|_{L^r_{xyT}}$, it will be seen that we need to control $\|D_x^{2+a}(uu_x)\|_{L^2_{xyT}}$ where $0<a<1/2$. However, the worst $\|u_{xxx}\|_{L^2_TL^{\infty}_{xy}}$  is impossible to be bounded with $5/2$ regularity. Therefore, we have to adopt a distinct approach in three dimensional case. The strategy is combining  the local smooth effect estimate and Duhamel's principle through multiple iterations. This will, of course, be at the expense of controlling $\|\langle x \rangle^{1/2+}u\|_{L^2_xL^{\infty}_{yT}}$ which can be estimated by using Lemma \ref{lem:weightZK} and Lemma \ref{lem:weight-error}.

Our proof of nonlinear smoothing estimate will also extend to the general case $k\geq 2$. And those cases are indeed easier and no weight in the physical space are needed.

\begin{theorem} \label{gZKDuhamelkg2}
Let $k\geq2$ and $s=2,3,\cdots$ be given. Assume that $u(t)\in C([-T, T];H^s(\mathbb{R}^3))$ is the solution to \eqref{gZK} with initial data $u_0 \in H^s(\mathbb{R}^3)$. Denote
$$z_k(t)=\int_0^t W(t-t')(u^k\partial_x u)(t')dt',$$
then  we have
	$$z_k(t)\in C([-T, T];H^{s+1}(\mathbb{R}^3)).$$
\end{theorem}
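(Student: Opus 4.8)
The plan is to establish the uniform bound $\sup_{t\in[-T,T]}\|z_k(t)\|_{H^{s+1}}<\infty$; continuity of $t\mapsto z_k(t)$ into $H^{s+1}$ then follows from the standard continuity of the Duhamel operator together with this a priori bound. Since $s$ is an integer it suffices to control $\|\partial^\alpha z_k\|_{L^\infty_TL^2_{xy}}$ for all multi-indices $|\alpha|\le s+1$, so the exact Leibniz rule applies and no fractional calculus is required. The driving mechanism is the local smoothing effect of $W(t)=e^{-t\partial_x\Delta}$: its phase $\xi_1|\xi|^2$ satisfies $\partial_{\xi_1}(\xi_1|\xi|^2)=2\xi_1^2+|\xi|^2\gtrsim|\xi|^2$, producing a full one-derivative gain. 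I will use three consequences of this: the homogeneous estimate $\|D^{s+1}W(t)u_0\|_{L^\infty_xL^2_{yT}}\lesssim\|u_0\|_{H^s}$ (with $D=D_x$ or $D=D_{y_j}$), the retarded estimate gaining two derivatives into $L^\infty_xL^2_{yT}$, and the retarded estimate \eqref{linear estimate:Kato-2} gaining one derivative from $L^1_xL^2_{yT}$ data into $L^\infty_TL^2_{xy}$.

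The first stage is to show that the solution itself obeys the two auxiliary bounds $\|D^{s+1}u\|_{L^\infty_xL^2_{yT}}<\infty$ and $\|u\|_{L^2_xL^\infty_{yT}}<\infty$. Writing $u=W(t)u_0+z_k$, the linear part of the first quantity is controlled by $\|u_0\|_{H^s}$ via the homogeneous smoothing estimate, while for $z_k$ I apply the two-derivative retarded estimate, reducing matters to $\|D^{s-1}(u^ku_x)\|_{L^1_xL^2_{yT}}$. By Leibniz this nonlinearity places at most $s$ derivatives on a single copy of $u$, i.e. a factor $D^su$ that already lies in $L^2_{xy}$ by the given energy bound $u\in C([-T,T];H^s)$; pairing it in $L^2_xL^2_{yT}$ against the remaining factor $u^k$ in $L^2_xL^\infty_{yT}$ closes this stage with no iteration. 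The maximal norm $\|u\|_{L^2_xL^\infty_{yT}}$ is finite by the maximal-function estimates for $W(t)$ underlying the local theory of Theorem \ref{lwp}; here, unlike the $k=1$ case behind Theorem \ref{ZKnonlinSmooth}, no spatial weight $\langle x\rangle^{1/2+}$ is needed.

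In the second stage I return to $z_k$ and apply \eqref{linear estimate:Kato-2}, obtaining $\|z_k\|_{L^\infty_TH^{s+1}}\lesssim\|D^s(u^ku_x)\|_{L^1_xL^2_{yT}}$ summed over $D=D_x,D_{y_j}$. Expanding by Leibniz, the decisive contribution is the top-order term $u^k\,\partial^\beta u$ with $|\beta|=s+1$: this is the only term in which a derivative of order $s+1$ falls on $u$, and it cannot be bounded by the energy norm because $u\notin H^{s+1}$. This is the main obstacle, and it is overcome precisely by the norms produced in the first stage: I place $\partial^\beta u$ in $L^\infty_xL^2_{yT}$ (local smoothing) and distribute $u^k$ as two copies in $L^2_xL^\infty_{yT}$ (maximal function) and $k-2$ copies in $L^\infty_{xyT}$ (energy and Sobolev, using $s\ge2>3/2$), so that Hölder lands the product in $L^1_xL^2_{yT}$. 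It is exactly here that $k\ge2$ is essential: the two surplus copies of $u$ supply the $L^1_x$ integrability for free, whereas for $k=1$ a single copy only reaches $L^2_x$ and forces the weight. All remaining terms of $D^s(u^ku_x)$ carry at most $s$ derivatives on every factor and are routine, the highest such factor going in $L^2_xL^2_{yT}$ by the energy bound and the rest into $L^2_xL^\infty_{yT}$ and $L^\infty_{xyT}$, with the extra copies of $u$ again furnishing the $x$-integrability. Thus the only genuinely delicate point is the single top-order term, whose control is purchased by trading the missing $H^{s+1}$ regularity of $u$ for the dispersive gain encoded in the local smoothing norm.
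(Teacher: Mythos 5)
Your proposal is correct and follows essentially the same route as the paper: the dual Kato smoothing estimate \eqref{linear estimate:Kato-2} converts the gain of one derivative into a bound on $\|D^{s}(u^k\partial_x u)\|_{L^1_xL^2_{yT}}$, the Leibniz expansion isolates the top-order term $u^k\,\partial^{\beta}u$ with $|\beta|=s+1$, which is placed in $L^\infty_xL^2_{yT}$ against the $k\ge 2$ copies of $u$ in maximal-function/$L^\infty$ norms supplying the $L^1_x$ integrability. The only difference is presentational: the paper treats $s=2$ and cites the local well-posedness theory for the auxiliary bounds $\|u_{xxx}\|_{L^\infty_xL^2_{yT}}$ and $\|u\|_{L^{k}_xL^\infty_{yT}}$, whereas you re-derive the smoothing bound $\|D^{s+1}u\|_{L^\infty_xL^2_{yT}}$ directly from Duhamel and \eqref{linear estimate:Kato-3}, which makes the argument self-contained for general integer $s$.
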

And one can easily combine Theorem \ref{thm: linear} with Theorem \ref{gZKDuhamelkg2} to extend Theorem \ref{thm: main} to the $k\geq 2$ case.

\subsection{Background}

The Zakharov-Kuznetsov (ZK) equation (with $k=1$ in \eqref{gZK})  models the propagation of ionic-acoustic waves in magnetized plasma \cite{1974_ZK}. It is a natural multi-dimensional extension of the well-known Korteweg-de Vries  (KdV) equation. Lannes,  Linares and Saut \cite{2013_Lannes_deduce_of_zk} proved that the ZK equation is a long-wave limit of the Euler-Poisson system. Han-Kwan \cite{2013_Han_deduce_zk} derived the KdV equation  and the ZK equation from the Vlasov-Poisson equation by a long wave scaling in the cold ions limit.

The dispersive blow up for 2D ZK equation has been established in recent work of \cite{2020_Linares_blowupforZK}, where they show the existence a solution to the 2D ZK equation, which is generated by smooth initial data,  fails to be $C^{1}$ for those time $t$ equals an integer, but remains $C^{1}$ at other time. Note that their dispersive blow up, unlike ours, occur on a discrete set of time. \\
The study of dispersive blow up was initiated by \cite{1972_benjamin} for Airy equations, i.e. the linear part of KdV equations. It reflect a (re) focusing phenomena in physics, which causes strange singularity. From a mathematical view point, it means the that the regularity for the equations with smooth initial value is destroyed at some points in space-time. In some sense, this is strange because one key feature of dispersive Hamiltionian PDEs is the time translation invariance/time reversal symmetry. But it will also become clear from the construction, the (re) focusing of dispersive waves are caused by the time translational symmetry. Dispersive blow up for KdV,  gKdV, Schr\"odinger equations has been studied in \cite{1993_Bona_Dispersive_gKdV},\cite{2010_Bona_dispersive_Schroinger},\cite{2016_Hong_dispersive_NLS},\cite{1993_LinaScia}. It has been noted one can employ weighted Sobolev space to explore the nonlinear smoothing estimate which is crucial in the illustration of dispersive blow up \cite{2017_Linares}, see also \cite{2020_Linares_blowupforZK}.

There are many literatures regarding the local and global well-posedness of ZK equation, (also modified ZK equation and generalized ZK equation). For the 3D case,  we refer to \cite{2009_Linares_3dZK},\cite{2012_Ribaud_3dZK},\cite{2015_Molinet_2dZK},\cite{2016_Huo_3dZK},\cite{2014_Grunrock_3DmZK},\cite{2015_Grunrock},\cite{2017_Kato},\cite{2018_KATO} and reference therein. For the 2D case, we refer to \cite{1995_Faminskii}, \cite{2009_Linares_2dZK_mZK},  \cite{2014_Grunrock_2dZK}, \cite{2015_Molinet_2dZK}, \cite{Shan20a,Shan20b}, \cite{2021-1_KINOSHITA}, \cite{SWZ2023}, \cite{2016_Bustamante},\cite{2016_Fonseca},\cite{2012_FARAH},  \cite{2012_Ribaud_2dgZK}, \cite{2015_Grunrock},\cite{2012_Ribaud_2dgZK}, \cite{2011Linares2dgZK},\cite{2020_Bhattacharya} and reference therein.

\subsection{Notation} We give the notation that will be used throughout this paper. For $A, B \geq 0$ fixed,  $A\lesssim B$ means that $A\leq C \cdot B$ for an absolute constant $C>0$.  $A\gg B$ means that  $A>C \cdot B$ for a very large positive constant $C$.  We write $c+\equiv c+\epsilon$ and $ c-\equiv c-\epsilon$ for some  $0<\epsilon\ll 1$.

Let $1\leq p,q\leq\infty$. We define 
$$\|f\|_{L^p_xL^q_{yT}}=\left(\int_{\mathbb{R}}\Big(\int_{\mathbb{R}^2}\int^T_{-T}|f(t,x,y)|^qdt \ dy\Big)^{p/q}dx\right)^{1/p}$$
with the usual modifications if either $p=\infty$ or $q=\infty$ .  If $T=\infty$ we shall use the notation $\|f\|_{L^p_xL^q_{yt}}$. Similar definitions and considerations may be made interchanging the variables $x$, $y$ and $t$.

For $s>0$, we define $D^sf$ and $J^sf$ as
$$\widehat{D^s}f(\xi,\eta)=(\xi^2+|\eta|^2)^{s/2}\hat{f}(\xi,\eta)\hspace{4mm} and \hspace{4mm} \widehat{J^s}f(\xi,\eta)=(1+\xi^2+|\eta|^2)^{s/2}\hat{f}(\xi,\eta).$$
We denote the fractional derivatives of order $s$ with respect to $x$ and $y$ by $D^s_xf$ and $D^s_yf$
$$\widehat{D^s_x}f(\xi,\eta)=|\xi|^{s}\hat{f}(\xi,\eta)\hspace{4mm} and \hspace{4mm} \widehat{D^s_y}f(\xi,\eta)=|\eta|^{s}\hat{f}(\xi,\eta).$$
and define $J^s_xf$ and $J^s_yf$ as
$$\widehat{J^s_x}f(\xi,\eta)=(1+\xi^2)^{s/2}\hat{f}(\xi,\eta)\hspace{4mm} and \hspace{4mm} \widehat{J^s_y}f(\xi,\eta)=(1+|\eta|^2)^{s/2}\hat{f}(\xi,\eta).$$

\subsection{Organization of the paper} In Section 2, we introduce function spaces and some
estimates that will be utilized frequently later. In Section 3, we prove local well-posedness for the 3D ZK equation in weighted Sobolev spaces. Section 4 is devoted to the construction of smooth initial data such that the solution of the corresponding linear equation develops singularities at all positive rational times. In Section 5, we show that for initial data in $H^{\frac{5}{2}-}(\mathbb{R}^3)$ the Duhamel term of the 3D ZK equation is actually in $H^{\frac{5}{2}+}(\mathbb{R}^3)$. This tells us that the dispersive blow-up is from the linear part of the solution. We prove that the Duhamel term possesses higher regularity for 3D generalized ZK equations as well.

\section{Preliminaries}\label{section:linear estimate}

In this section, we introduce linear estimates for 3D ZK, including Strichartz estimate, Kato local smoothing estimate, maximal function estimate, interpolation inequality and fractional Leibniz rule.

First we state the  decay estimate  which implies  Strichartz estimate for the linear propagator of ZK equations.
\begin{lemma}[see Lemma 3.2 in \cite{2009_Linares_3dZK}]\label{lem:dispersive decay}
	Let $0<\varepsilon<1$, $\rho \in \mathbb{R}$, then
	\begin{equation*}
		I_{t}(\bar{x})=\int_{\mathbb{R}^{3}}|\xi|^{\varepsilon+i\rho} e^{i t \omega(\bar{\xi})+i \bar{x} \cdot \bar{\xi}} \mathrm{d} \bar{\xi}
	\end{equation*}
	satisfies
	\begin{equation*}
		\left\|I_{t}(\bar{x})\right\|_{L^\infty} \lesssim \frac{1}{|t|^{1+\varepsilon / 3}},
	\end{equation*}
where $\omega(\bar{\xi})=\bar{\xi}_1(\bar{\xi}_1^2+\bar{\xi}_2^2+\bar{\xi}_3^2)$.
\end{lemma}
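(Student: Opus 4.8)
The estimate is scale-invariant, so the plan is to first peel off the decay factor $|t|^{-1-\varepsilon/3}$ by a change of variables and then reduce the whole statement to the uniform boundedness of a single scale-invariant oscillatory integral.

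\textbf{Step 1 (Scaling).} Substitute $\bar\xi=|t|^{-1/3}\bar\eta$ and use that $\omega$ is homogeneous of degree $3$ while $|\bar\xi|^{\varepsilon+i\rho}$ carries homogeneity $\varepsilon+i\rho$. The Jacobian contributes $|t|^{-1}$ and the amplitude contributes $|t|^{-\varepsilon/3-i\rho/3}$, so that
\[
I_t(\bar x)=|t|^{-1-\varepsilon/3}\,|t|^{-i\rho/3}\int_{\mathbb{R}^3}|\bar\eta|^{\varepsilon+i\rho}\,e^{\,i\,\mathrm{sgn}(t)\,\omega(\bar\eta)+i(|t|^{-1/3}\bar x)\cdot\bar\eta}\,d\bar\eta .
\]
Since $|t|^{-i\rho/3}$ has modulus one and the supremum over $\bar x$ is invariant under the rescaling $\bar x\mapsto|t|^{1/3}\bar x$, the claim reduces to the uniform bound
\[
\sup_{\bar y\in\mathbb{R}^3}\Big|\int_{\mathbb{R}^3}|\bar\eta|^{\varepsilon+i\rho}\,e^{\pm i\,\omega(\bar\eta)+i\,\bar y\cdot\bar\eta}\,d\bar\eta\Big|\lesssim 1 .
\]
Thus the decay rate is exactly the scaling exponent, and the entire content lies in this oscillatory estimate at unit frequency scale.

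\textbf{Step 2 (Dyadic reduction).} Insert a Littlewood--Paley partition $\sum_j\psi(2^{-j}\bar\eta)$. The piece $|\bar\eta|\lesssim 1$ is harmless because there $|\bar\eta|^{\varepsilon}$ is bounded and integrable. On $|\bar\eta|\sim 2^j$ I rescale $\bar\eta=2^j\zeta$, so the block equals
\[
2^{j(3+\varepsilon)}\int_{|\zeta|\sim1}|\zeta|^{\varepsilon+i\rho}\psi(\zeta)\,e^{\,i\lambda[\pm\omega(\zeta)+\bar y'\cdot\zeta]}\,d\zeta,\qquad \lambda=2^{3j},\ \ \bar y'=2^{-2j}\bar y .
\]
Each block is therefore a compactly supported oscillatory integral with large parameter $\lambda=2^{3j}$ and smooth $O(1)$ amplitude. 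If one proves decay $\lambda^{-\alpha}$ uniformly in $\bar y'$, the full sum is bounded by $\sum_{j\ge0}2^{j(3+\varepsilon-3\alpha)}$, which converges precisely when $\alpha>1+\varepsilon/3$; since $\varepsilon<1$ it is enough to produce $\alpha>4/3$ in every block. The factor $|\zeta|^{i\rho}=e^{i\rho\ln|\zeta|}$ is absorbed into the phase: its gradient $\rho\,\zeta/|\zeta|^2$ is purely radial and never vanishes, so this slowly varying term creates no new stationary points and is handled within the same van der Corput analysis, yielding uniformity in $\rho$.

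\textbf{Step 3 (Stationary phase and the degenerate sets).} The phase Hessian is
\[
H_\omega(\zeta)=\begin{pmatrix}6\zeta_1 & 2\zeta_2 & 2\zeta_3\\ 2\zeta_2 & 2\zeta_1 & 0\\ 2\zeta_3 & 0 & 2\zeta_1\end{pmatrix},\qquad \det H_\omega(\zeta)=8\,\zeta_1\bigl(3\zeta_1^2-\zeta_2^2-\zeta_3^2\bigr).
\]
Away from $\{\det H_\omega=0\}$ nondegenerate three-dimensional stationary phase gives $\lambda^{-3/2}$, and $3/2>1+\varepsilon/3$ for $\varepsilon<1$, so this region is fine. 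The critical locus consists of the plane $\{\zeta_1=0\}$ and the cone $\{3\zeta_1^2=\zeta_2^2+\zeta_3^2\}$. On the cone the degeneracy is a fold with nonvanishing cubic term, giving the borderline decay $\lambda^{-1}\lambda^{-1/3}=\lambda^{-4/3}$, which is exactly why the hypothesis $\varepsilon<1$ appears. The genuine obstacle is the neighborhood of $\{\zeta_1=0\}$: there the kernel direction of $H_\omega$ is transverse, yet $\omega$ is only \emph{quadratic} in $(\zeta_2,\zeta_3)$ for fixed $\zeta_1$, so the cubic also vanishes in that direction and the fold analysis breaks down. My plan here is to localize further in $|\zeta_1|$ and in the transverse variables, carry out the $(\zeta_2,\zeta_3)$ integration first as a (shifted) Fresnel/Gaussian integral whose amplitude $|\zeta_1|^{-1}$ is then fed into a one-dimensional van der Corput estimate in $\zeta_1$ against the remaining Airy-type phase $\pm\zeta_1^3+(\text{lin})\zeta_1$, tracking cancellation uniformly in $\bar y'$. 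Combining the nondegenerate estimate, the fold estimate on the cone, and this iterated one-dimensional estimate near $\{\zeta_1=0\}$ yields $\alpha>1+\varepsilon/3$ in every block and hence the desired uniform bound.
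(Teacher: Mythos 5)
First, note that the paper itself offers no proof of this lemma: it is quoted verbatim from Lemma 3.2 of \cite{2009_Linares_3dZK}, so your attempt can only be measured against that source and against correctness. Your Steps 1--2 (peeling off $|t|^{-1-\varepsilon/3}$ by the parabolic-type scaling, dyadic decomposition, reduction to a block bound $\lambda^{-\alpha}$ with $\alpha>1+\varepsilon/3$) are correct, and your identification of the cone $\{3\zeta_1^2=\zeta_2^2+\zeta_3^2\}$ as a fold yielding $\lambda^{-4/3}$, hence the threshold $\varepsilon<1$, is a genuine and correct insight. The decisive gap sits exactly where you flag ``the genuine obstacle,'' and it is not merely technical.

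The amplitude in the lemma is $|\bar\xi_1|^{\varepsilon+i\rho}$, the frequency conjugate to $x$ only --- the notation is admittedly ambiguous, but this is why every Strichartz estimate derived from it gains only $D_x$ derivatives and why the paper remarks that no decay is available in the $y$-directions. You read it as the full norm $|\bar\xi|^{\varepsilon+i\rho}$, and under that reading the stated bound is \emph{false}. Take $\bar y'=(-1,0,0)$ in your block integral: the phase $\omega(\zeta)+\bar y'\cdot\zeta=\zeta_1(|\zeta|^2-1)$ is critical on the entire circle $\{\zeta_1=0,\ \zeta_2^2+\zeta_3^2=1\}$, which lies in the support $|\zeta|\sim 1$; the transverse Hessian there is $\bigl(\begin{smallmatrix}0&2\\ 2&0\end{smallmatrix}\bigr)$, nondegenerate and hyperbolic, so Morse--Bott stationary phase gives exactly $c\,\lambda^{-1}$ with $c\neq0$ (no cancellation: the phase vanishes on the circle and the amplitude is positive there). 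The block then contributes $2^{j(3+\varepsilon)}\cdot 2^{-3j}=2^{j\varepsilon}\to\infty$. The estimate survives only because the true weight $|\xi_1|^{\varepsilon}$ \emph{vanishes} on this critical circle: integrating the hyperbolic model phase $e^{i\lambda uw}$ against $|u|^{\varepsilon}$ upgrades $\lambda^{-1}$ to $\lambda^{-1-\varepsilon}$, which is precisely what closes the dyadic sum. Relatedly, even granting the correct amplitude, your announced plan near $\{\zeta_1=0\}$ --- Fresnel in $(\zeta_2,\zeta_3)$ giving $(\lambda|\zeta_1|)^{-1}$, then a one-dimensional van der Corput in $\zeta_1$ --- yields only $\lambda^{-1}\log\lambda$ if the $\zeta_1$-integral is estimated in absolute value, and you never exhibit the mechanism (the vanishing weight, or a quantified cancellation uniform in $\bar y'$) that produces the missing factor $\lambda^{-\varepsilon}$; uniformity in $\bar y'$ across the coalescence of stationary points with the caustic is likewise asserted rather than proved. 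The architecture is right, but the crux of the lemma is exactly the part left as a ``plan,'' and as written the argument would not close.
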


Using the above decay estimate and Hardy-Littlewood-Paley inequality, we have
\begin{equation}\label{ineq:strichartz3}
	\big\|\int D_x^{\theta\varepsilon}W(t-t^\prime)g(\cdot,t^\prime)\mathrm{d}t^\prime\big\|_{L_t^pL^q_{xy}} \lesssim
	\|g\|_{L_t^{p'}L^{q'}_{xy}}\nonumber
\end{equation}
where $0<\varepsilon<1$,  $0<\theta<(1+\varepsilon/3)^{-1}$, $\frac{2}{p}=\theta(1+\frac{\varepsilon}{3})$ and $\frac{1}{q}=\frac{1-\theta}{2}$.
Then by TT*-argument the following Strichartz estimate is obtained.
\begin{lemma}[Strichartz estimate, see Proposition 3.1 in \cite{2009_Linares_3dZK}]\label{linear estimate:strichartz}
	Let $0<\varepsilon<1$,  $0<\theta<(1+\frac{\varepsilon}{3})^{-1}$. Then
	\begin{align}
		\left\|D_x^{\frac{\theta\varepsilon}{2}}W(t)f\right\|_{L_t^pL^q_{xy}} &\lesssim
		\|f\|_{L^2_{xy}},\label{ineq:strichartz1} \\
	\left\|\int D_x^{\frac{\theta\varepsilon}{2}}W(-t)g(\cdot,t)\mathrm{d}t\right\|_{L^2_{xy}} &\lesssim
		\|g\|_{L_t^{p'}L^{q'}_{xy}}, \label{ineq:strichartz2}
	\end{align}
where $\frac{2}{p}=\theta(1+\frac{\varepsilon}{3})$, $\frac{1}{q}=\frac{1-\theta}{2}$, $\frac{1}{p}+\frac{1}{p'}=1$ and $\frac{1}{q}+\frac{1}{q'}=1$.
\end{lemma}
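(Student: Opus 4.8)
The plan is to derive the homogeneous pair \eqref{ineq:strichartz1}--\eqref{ineq:strichartz2} from the retarded bound \eqref{ineq:strichartz3} by the abstract $TT^{*}$ argument, after first producing \eqref{ineq:strichartz3} out of the decay estimate in Lemma \ref{lem:dispersive decay}. Everything rests on two endpoint facts. Since $\omega(\bar\xi)=\bar\xi_1(\bar\xi_1^2+\bar\xi_2^2+\bar\xi_3^2)$ is real, $W(t)$ is a modulus-one Fourier multiplier, hence unitary on $L^2_{xy}$, giving $\|W(t)f\|_{L^2_{xy}}=\|f\|_{L^2_{xy}}$. On the other hand, the operator $D_x^{\varepsilon+i\rho}W(t)$ is convolution (in $\bar x$) against the kernel $I_t$ of Lemma \ref{lem:dispersive decay}, so Young's inequality turns the stated $L^\infty$ decay of $I_t$ into $\|D_x^{\varepsilon+i\rho}W(t)f\|_{L^\infty_{xy}}\lesssim |t|^{-(1+\varepsilon/3)}\|f\|_{L^1_{xy}}$, uniformly in $\rho\in\mathbb{R}$.

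First I would interpolate between these two endpoints. The family $T_z:=D_x^{z}W(t)$ is analytic and uniformly bounded on the line $\Re z=0$ (by the $L^2$ isometry) and on the line $\Re z=\varepsilon$ (by the $L^1_{xy}\to L^\infty_{xy}$ decay); the imaginary shift $i\rho$ in Lemma \ref{lem:dispersive decay} is precisely what supplies the vertical-line bounds required for admissibility. Stein's complex interpolation theorem then gives, at $z=\theta\varepsilon$ with $\theta\in(0,1)$,
\begin{equation}
\|D_x^{\theta\varepsilon}W(t)f\|_{L^q_{xy}}\lesssim |t|^{-\theta(1+\varepsilon/3)}\|f\|_{L^{q'}_{xy}},\qquad \tfrac1q=\tfrac{1-\theta}{2}.\nonumber
\end{equation}

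Next I would obtain \eqref{ineq:strichartz3}. By Minkowski's integral inequality and the dispersive bound above,
\begin{equation}
\Big\|\int D_x^{\theta\varepsilon}W(t-t')g(\cdot,t')\,dt'\Big\|_{L^q_{xy}}\lesssim \int |t-t'|^{-\theta(1+\varepsilon/3)}\|g(\cdot,t')\|_{L^{q'}_{xy}}\,dt';\nonumber
\end{equation}
taking $L^p_t$ and applying the one-dimensional Hardy--Littlewood--Sobolev inequality in $t$ reproduces $\|g\|_{L^{p'}_tL^{q'}_{xy}}$ exactly when $\tfrac2p=\theta(1+\varepsilon/3)$, matching the stated relation, while $0<\theta<(1+\varepsilon/3)^{-1}$ keeps the kernel HLS-admissible ($0<\theta(1+\varepsilon/3)<1$) and forces $2<p,q<\infty$. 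Finally, setting $Tf(t):=D_x^{\theta\varepsilon/2}W(t)f$, the relations $W(t)^*=W(-t)$ and self-adjointness of $D_x^{\theta\varepsilon/2}$ give $T^*g=\int D_x^{\theta\varepsilon/2}W(-t)g(\cdot,t)\,dt$ and $TT^*g(t)=\int D_x^{\theta\varepsilon}W(t-t')g(\cdot,t')\,dt'$. As $L^p_tL^q_{xy}$ is reflexive for $1<p,q<\infty$, the $TT^*$ lemma asserts $\|T\|^2=\|T^*\|^2=\|TT^*\|$, so the boundedness of $TT^*$ from $L^{p'}_tL^{q'}_{xy}$ into $L^p_tL^q_{xy}$ furnished by \eqref{ineq:strichartz3} yields \eqref{ineq:strichartz1} and its dual \eqref{ineq:strichartz2} simultaneously.

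The step I expect to be the crux is the fractional interpolation: one must view $D_x^{\theta\varepsilon}$ as the diagonal value of an admissible analytic family controlled on both vertical lines, which is exactly the purpose of the $\rho$-uniform decay in Lemma \ref{lem:dispersive decay}; the rest is exponent bookkeeping to align the HLS condition with $\tfrac2p=\theta(1+\varepsilon/3)$ and $\tfrac1q=\tfrac{1-\theta}{2}$, together with the reflexivity check needed to invoke the abstract $TT^*$ identity.
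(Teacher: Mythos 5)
Your proposal is correct and follows essentially the same route the paper (and its cited reference, Proposition 3.1 of Linares--Saut) takes: kernel decay from Lemma \ref{lem:dispersive decay}, Stein interpolation against the $L^2$ unitarity to get the $L^{q'}_{xy}\to L^q_{xy}$ dispersive bound, Hardy--Littlewood--Sobolev in time to obtain \eqref{ineq:strichartz3}, and then the $TT^*$ argument. The paper merely sketches these steps and defers to the citation; your write-up fills in exactly the intended details, with the exponent bookkeeping ($2/p=\theta(1+\varepsilon/3)$, $1/q=(1-\theta)/2$, HLS admissibility from $\theta<(1+\varepsilon/3)^{-1}$) matching the statement.
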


\begin{corollary}\label{linear estimate: strichartz cor}
	For $0<\gamma<\beta$, we have
	\begin{equation}
		\|D^{\gamma}_x W(t)f\|_{L^2_T L^\infty_{xy}}\lesssim_T \|f\|_{H^\beta_{xy}}.\label{linear estimate: strichartz cor1}
	\end{equation}
\end{corollary}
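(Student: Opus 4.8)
The plan is to deduce \eqref{linear estimate: strichartz cor1} from the homogeneous Strichartz estimate \eqref{ineq:strichartz1} by paying for the passage to the $L^\infty_{xy}$ endpoint with a Sobolev embedding in the three spatial variables, and for the restriction to $L^2_T$ with H\"older in time on a finite interval. Concretely, I would first fix the gap $\delta:=\beta-\gamma>0$ and then select admissible parameters $\varepsilon\in(0,1)$, $\theta\in\bigl(0,(1+\tfrac{\varepsilon}{3})^{-1}\bigr)$ together with a Sobolev exponent $\sigma$. Writing $q=\tfrac{2}{1-\theta}$ and $p$ with $\tfrac2p=\theta(1+\tfrac\varepsilon3)$ as in Lemma \ref{linear estimate:strichartz}, note that the admissibility constraint on $\theta$ forces $\tfrac2p\in(0,1)$, hence $p>2$; this is exactly what makes H\"older on the finite time interval yield $\|g\|_{L^2_T}\lesssim_T\|g\|_{L^p_T}$, and also $\|g\|_{L^p_T}\le\|g\|_{L^p_t}$ by restricting the time integral.

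The chain of estimates I would carry out is as follows. Since $(x,y)\in\mathbb{R}^3$, the Sobolev embedding $\|g\|_{L^\infty_{xy}}\lesssim\|J^\sigma g\|_{L^q_{xy}}$, valid for $\sigma>3/q$, gives pointwise in $t$ the bound $\|D_x^\gamma W(t)f\|_{L^\infty_{xy}}\lesssim\|J^\sigma D_x^\gamma W(t)f\|_{L^q_{xy}}$. Applying the time-H\"older step and then commuting the Fourier multipliers past $W(t)$ (all of $J^\sigma$, $D^a_x$ and $W(t)=e^{-t\partial_x\Delta}$ are multipliers, so they commute), I would factor $J^\sigma D_x^\gamma=D_x^{\theta\varepsilon/2}\,\bigl(J^\sigma D_x^{\gamma-\theta\varepsilon/2}\bigr)$ and invoke \eqref{ineq:strichartz1} with $f$ replaced by $J^\sigma D_x^{\gamma-\theta\varepsilon/2}f$:
\begin{equation}
\|D_x^\gamma W(t)f\|_{L^2_TL^\infty_{xy}}\lesssim_T \bigl\|D_x^{\theta\varepsilon/2}W(t)\bigl(J^\sigma D_x^{\gamma-\theta\varepsilon/2}f\bigr)\bigr\|_{L^p_tL^q_{xy}}\lesssim \bigl\|J^\sigma D_x^{\gamma-\theta\varepsilon/2}f\bigr\|_{L^2_{xy}}.\nonumber
\end{equation}
Provided $\gamma-\theta\varepsilon/2\ge0$, the symbol bound $|\xi|^{\gamma-\theta\varepsilon/2}\le(1+\xi^2+|\eta|^2)^{(\gamma-\theta\varepsilon/2)/2}$ lets me dominate the last quantity by $\|f\|_{H^{\sigma+\gamma-\theta\varepsilon/2}}$.

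It therefore remains to check the parameter count, which is where the real content lies. The net derivative loss relative to $D^\gamma_x$ is $\sigma-\theta\varepsilon/2$, and taking $\sigma$ just above the Sobolev threshold $3/q=\tfrac{3(1-\theta)}{2}$ makes this as close as desired to
\begin{equation}
\frac{3(1-\theta)}{2}-\frac{\theta\varepsilon}{2}=\frac{3-\theta(3+\varepsilon)}{2}.\nonumber
\end{equation}
As $\theta\uparrow(1+\tfrac\varepsilon3)^{-1}=\tfrac{3}{3+\varepsilon}$ this quantity tends to $0$, so for the prescribed gap $\delta=\beta-\gamma>0$ I can choose $\theta$ close enough to the endpoint and $\sigma$ close enough to the threshold to guarantee $\sigma+\gamma-\theta\varepsilon/2\le\beta$, whence $\|f\|_{H^{\sigma+\gamma-\theta\varepsilon/2}}\lesssim\|f\|_{H^\beta}$. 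The one compatibility condition to watch is $\theta\varepsilon/2\le\gamma$, used to keep $D_x^{\gamma-\theta\varepsilon/2}$ a nonnegative-order multiplier and so avoid a low-frequency singularity at $\xi=0$; since at the relevant endpoint $\theta\varepsilon/2\to\tfrac{3\varepsilon}{2(3+\varepsilon)}$, fixing $\varepsilon$ small first renders this harmless while leaving the loss freely minimizable in $\theta$. Thus the main obstacle is not any single inequality but this simultaneous optimization, which amounts to verifying that the dispersive gain $D_x^{\theta\varepsilon/2}$ can absorb exactly the $3/q$ derivatives spent on the Sobolev embedding, up to the arbitrarily small surplus $\delta$.
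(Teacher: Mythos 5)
Your proposal is correct and follows essentially the same route as the paper: Sobolev embedding in $(x,y)$ costing $3/q$ derivatives, H\"older in time using $p>2$, factoring out $D_x^{\theta\varepsilon/2}$ to invoke the Strichartz estimate \eqref{ineq:strichartz1}, and then optimizing $\theta,\varepsilon$ near the admissibility endpoint where the Sobolev loss $\tfrac{3(1-\theta)}{2}$ is cancelled by the dispersive gain $\tfrac{\theta\varepsilon}{2}$, subject to $\gamma\geq\theta\varepsilon/2$. The paper phrases the limit as $\varepsilon=0+$, $\theta=1-$ while you fix $\varepsilon$ small first and send $\theta\uparrow\tfrac{3}{3+\varepsilon}$, but this is the same optimization.
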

\begin{proof}
Using Sobolev embedding theorem and Strichartz estimate \eqref{ineq:strichartz1} yields
\begin{align}
		\|D^{\gamma}_x W(t)f\|_{L^2_T L^\infty_{xy}}\lesssim_T \big\|J^{3/q}D^{\gamma-\theta\varepsilon/2}_xD^{\theta\varepsilon/2}_x W(t)f\big\|_{L^{p}_t L^{q}_{xy}}\lesssim \|f\|_{H^{3(1-\theta)/2+\gamma-\theta\varepsilon/2}_{xy}}\nonumber
	\end{align}
as long as $\gamma\geq\theta\varepsilon/2$. By taking $\varepsilon=0+$ and $\theta=1-$, one immediately gets
 \eqref{linear estimate: strichartz cor1}.
\end{proof}
\begin{remark}
	As there are no decay estimates for $y_j$ ($j=1,2$),  we can not get corresponding Strichartz estimates in $y$-direction.
\end{remark}

\begin{lemma}
	[Kato smoothing estimate]\label{Kato's smoothing effect }
	For any $f\in L^2(\mathbb{R}^3)$, $g\in L^1_x L^2_{yt}$,
\begin{align}
\big\|\nabla W(t)f \big\|_{L_x^{\infty}L^2_{yt}} &\lesssim
		\|f \|_{L^{2}_{xy}},\label{linear estimate:Kato-1} \\
		\sup \limits_{t\in[-T,T]} \big\|\nabla\int_0^t W(-t')g( t')\mathrm{d}t'\big\|_{L^2_{xy}} &\lesssim
		\|g\|_{L^1_xL^{2}_{yt}}.\label{linear estimate:Kato-2}\\
	 \big\|\nabla^2\int_0^t W(t-t')g( t')\mathrm{d}t'\big\|_{L_x^{\infty}L^2_{yT}} &\lesssim
		\|g\|_{L^1_xL^{2}_{yT}}.\label{linear estimate:Kato-3}
	\end{align}
\end{lemma}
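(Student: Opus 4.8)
The plan is to treat the three inequalities in the order stated, since \eqref{linear estimate:Kato-1} is the basic local smoothing estimate, \eqref{linear estimate:Kato-2} is its dual, and \eqref{linear estimate:Kato-3} is obtained by composing the two. For \eqref{linear estimate:Kato-1} I would fix $x\in\mathbb{R}$ and evaluate the $L^2_{yt}$ norm by Plancherel in the $(y,t)$ variables. Writing $W(t)f(x,y)=\int e^{i(x\xi+y\cdot\eta+t\phi(\xi,\eta))}\hat f(\xi,\eta)\,d\xi\,d\eta$ with phase $\phi(\xi,\eta)=\xi^3+\xi|\eta|^2$ (this is the symbol $\omega$ of Lemma \ref{lem:dispersive decay} with $\bar\xi=(\xi,\eta_1,\eta_2)$), the key observation is that for each fixed $\eta$ the map $\xi\mapsto\tau=\phi(\xi,\eta)$ is a strictly increasing bijection of $\mathbb{R}$, since $\partial_\xi\phi=3\xi^2+|\eta|^2>0$. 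Carrying out this change of variables inside the Plancherel identity (its Jacobian is exactly $\partial_\xi\phi$) turns the $(y,t)$-Fourier transform into a delta in $(\eta,\tau)$ and, after undoing the substitution, yields $\|\partial_x W(t)f(x,\cdot)\|_{L^2_{yt}}^2=\int\int \frac{\xi^2}{3\xi^2+|\eta|^2}|\hat f(\xi,\eta)|^2\,d\xi\,d\eta$, with $\xi^2$ replaced by $\eta_j^2$ for the $\partial_{y_j}$ components. The weight $3\xi^2+|\eta|^2$ coming from the Jacobian is precisely what absorbs one full derivative: since $\xi^2\le 3\xi^2+|\eta|^2$ and $\eta_j^2\le|\eta|^2\le 3\xi^2+|\eta|^2$, both multipliers are bounded by $1$, so the right-hand side is $\lesssim\|f\|_{L^2_{xy}}^2$ uniformly in $x$, and taking the supremum over $x$ gives \eqref{linear estimate:Kato-1}.

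For \eqref{linear estimate:Kato-2} I would argue by duality. The operator $f\mapsto\nabla W(\cdot)f$ is bounded from $L^2_{xy}$ into $L^\infty_x L^2_{yt}$ by \eqref{linear estimate:Kato-1}; using that $L^\infty_x L^2_{yt}$ is the dual of $L^1_x L^2_{yt}$ together with $W(t)^*=W(-t)$ and $\nabla^*=-\nabla$, the adjoint statement is exactly $\|\nabla\int_{\mathbb{R}} W(-t')g(t')\,dt'\|_{L^2_{xy}}\lesssim\|g\|_{L^1_x L^2_{yt}}$ for the integral over all of $\mathbb{R}$. To recover the truncated integral $\int_0^t$ together with the supremum over $t\in[-T,T]$, I would replace $g$ by $\mathbf{1}_{[0,t]}g$ for each fixed $t$; this does not increase the $L^1_x L^2_{yt}$ norm, so the bound is uniform in $t$ and \eqref{linear estimate:Kato-2} follows.

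For \eqref{linear estimate:Kato-3} the idea is to split $\nabla^2$ and feed one derivative to each of the previous two estimates. Using the group law $W(t-t')=W(t)W(-t')$ and the fact that $\nabla$ commutes with the Fourier multiplier $W$, one has $\nabla^2\int_{-T}^{T} W(t-t')g\,dt'=\nabla W(t)\,[\,\nabla\int_{-T}^{T} W(-t')g\,dt'\,]$. The un-truncated form of \eqref{linear estimate:Kato-2} bounds the inner bracket in $L^2_{xy}$ by $\|g\|_{L^1_x L^2_{yT}}$, and \eqref{linear estimate:Kato-1} then bounds the outer $\nabla W(t)[\cdot]$ in $L^\infty_x L^2_{yT}$, giving \eqref{linear estimate:Kato-3} for the full (non-retarded) integral over $[-T,T]$ with no derivative ever landing on $g$.

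The hard part will be passing from this non-retarded inequality to the genuinely retarded operator $\int_0^t$ in \eqref{linear estimate:Kato-3}. Here the input space $L^1_x L^2_{yT}$ and the output space $L^\infty_x L^2_{yT}$ carry the same ($L^2$) integrability in time, so the Christ--Kiselev lemma is unavailable in its usual off-diagonal form, and the mixed norms intertwine $x$ and $t$. I would handle this by representing the sharp cutoff $\mathbf{1}_{[0,t]}(t')$ through its Fourier transform, which reduces the retarded operator to the non-retarded one composed with a Hilbert-transform-type multiplier in the time frequency; since such multipliers are bounded on $L^2_t$, the diagonal exponent causes no loss. Equivalently, one checks that the kernel of $\nabla^2\int_0^t W(t-t')$ splits into a local-smoothing piece plus a time-Hilbert-transform piece, each bounded separately with respect to the mixed $L^1_x L^2_{yt}$ norm; verifying this compatibility is the delicate technical point of the argument.
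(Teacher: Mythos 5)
The paper itself does not prove this lemma: it cites Propositions 3.1 and 3.6 of Ribaud--Vento \cite{2012_Ribaud_3dZK} for \eqref{linear estimate:Kato-1} and \eqref{linear estimate:Kato-3} and obtains \eqref{linear estimate:Kato-2} as the dual of \eqref{linear estimate:Kato-1}. Your treatment of \eqref{linear estimate:Kato-1} (Plancherel in $(y,t)$ after the change of variables $\tau=\omega(\xi,\eta)$, whose Jacobian $3\xi^2+|\eta|^2$ dominates both $\xi^2$ and $\eta_j^2$) and of \eqref{linear estimate:Kato-2} (duality plus the truncation $g\mapsto\mathbf{1}_{[0,t]}g$, which does not increase the $L^1_xL^2_{yt}$ norm) is correct and is essentially the argument underlying the cited results.

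The genuine gap is in \eqref{linear estimate:Kato-3}. Composing \eqref{linear estimate:Kato-2} with \eqref{linear estimate:Kato-1} only yields the non-retarded estimate with $\int_{-T}^{T}$ in place of $\int_0^t$, and you rightly note that Christ--Kiselev is unavailable here because the time integrability is $L^2$ on both the input and output sides. But your proposed repair --- writing $\mathbf{1}_{\{t'<t\}}=\tfrac12\bigl(1+\operatorname{sgn}(t-t')\bigr)$ and treating the $\operatorname{sgn}$ part as a Hilbert-transform multiplier in the time frequency --- is only announced, not carried out, and as sketched it does not close: representing $\operatorname{sgn}(t-t')$ as $c\,\mathrm{p.v.}\!\int e^{i\lambda(t-t')}\lambda^{-1}\,d\lambda$ and applying the non-retarded bound uniformly in $\lambda$ leaves a non-absolutely-convergent $\lambda$-integral that cannot be pulled through the mixed norm $L^\infty_xL^2_{yT}$ by Minkowski, and the $L^2_t$-boundedness of the Hilbert transform does not commute with taking $L^\infty$ in $x$ on the outside. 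The retarded gain of a second derivative is precisely the content of \eqref{linear estimate:Kato-3}; the standard proof (Kenig--Ponce--Vega for the Airy group, Proposition 3.6 of \cite{2012_Ribaud_3dZK} for 3D ZK) instead takes the space-time Fourier transform, writes the retarded Duhamel operator as the multiplier $\bigl(i(\tau-\omega(\xi,\eta))\bigr)^{-1}$ plus a boundary term, and bounds the resulting oscillatory kernel $\int e^{ix\xi}\,\xi^2\,(\tau-\omega(\xi,\eta))^{-1}\,d\xi$ (and its analogues with $\xi\eta_j$, $\eta_j\eta_k$ in the numerator) pointwise in $x$, uniformly in $(\tau,\eta)$, by contour deformation near the real zeros of $\tau-\omega(\cdot,\eta)$. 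Without that explicit kernel estimate, or an equivalent substitute, your argument for \eqref{linear estimate:Kato-3} is incomplete.
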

\begin{proof}
\eqref{linear estimate:Kato-1} and \eqref{linear estimate:Kato-3} are from Proposition 3.1 and Proposition 3.6 in \cite{2012_Ribaud_3dZK}. \eqref{linear estimate:Kato-2} is the dual of \eqref{linear estimate:Kato-1}.
\end{proof}

\begin{remark}
	We can also get 
\begin{align}
\|D_x^{\frac12}D_{y_{j_1}}^{\frac12}W(t)f\|_{L^\infty_{y_{j_1}} L^2_{xy_{j_2}T}}&\lesssim \|f\|_{L^2_{xy}}\label{linear estimate:Kato-4} \\
\sup \limits_{t\in[-T,T]} \big\|\nabla\int_0^t W(-t')g(\cdot, t')\mathrm{d}t'\big\|_{L^2_{xy}} &\lesssim
		\|g\|_{L^1_{y_{j_1}} L^2_{xy_{j_2}T}}\label{linear estimate:Kato-5}
\end{align}
	where $j_1,j_2\in\{1,2\}$ and $j_1\neq j_2$.
\end{remark}

\begin{lemma}[Maximal function estimate]
	Let $0<T<1$ and $s> 1$. Assume that $f\in \mathcal{S}\left(\mathbb{R}^{3}\right)$, then 
	\begin{equation}
		\|W(t) f\|_{L_{x}^{2} L_{yT}^{\infty}} \lesssim\|f\|_{H^{s}_{xy}}. \label{linear estimate:maximal}
	\end{equation}
\end{lemma}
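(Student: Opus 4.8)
The plan is to reduce the three–dimensional maximal estimate to a one–dimensional, Airy–type maximal estimate in the $x$–variable by taking the partial Fourier transform in $y$, and to pay for the $L^\infty_y$ supremum by a cheap Sobolev embedding on $\mathbb{R}^2_y$. Writing $g(x,\eta)$ for the partial Fourier transform of $f$ in $y$, one has $W(t)f(x,y)=\int_{\mathbb{R}^2}e^{iy\cdot\eta}\,U_\eta(t)g(x,\eta)\,d\eta$, where $U_\eta(t)$ is the one–dimensional group with $x$–Fourier symbol $e^{it(\xi^3+|\eta|^2\xi)}$. Since $\xi^3+|\eta|^2\xi$ differs from the Airy symbol $\xi^3$ only by the term $|\eta|^2\xi$, which is linear in $\xi$, we have the shear identity $U_\eta(t)h(x)=[e^{-t\partial_x^3}h](x+t|\eta|^2)$; in particular the second $\xi$–derivative of the phase equals $6t\xi$, independent of $\eta$.

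Next, for fixed $x$ I would peel off the $y$– and $t$–suprema one at a time. By Sobolev embedding on $\mathbb{R}^2$, which costs just over one derivative (using that $\langle\eta\rangle^{-(1+)}\in L^2(\mathbb{R}^2)$), one gets $\sup_{y}|W(t)f(x,y)|\lesssim(\int_{\mathbb{R}^2}\langle\eta\rangle^{2+}|U_\eta(t)g(x,\eta)|^2\,d\eta)^{1/2}$, after which the remaining $\sup_{t\in[-T,T]}$ is brought inside the $\eta$–integral via $\sup\!\int\le\int\!\sup$. Squaring, integrating in $x$ and applying Fubini then reduces matters, fibre by fibre, to the one–dimensional maximal estimate $\|U_\eta(t)h\|_{L^2_xL^\infty_{[-T,T]}}\lesssim\|h\|_{H^{0+}_x}$ \emph{uniformly in} $\eta$. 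Combined with Plancherel in $x$ and $\eta$, and the bound $\langle\eta\rangle^{2+}\langle\xi\rangle^{0+}\lesssim\langle(\xi,\eta)\rangle^{2+}$, this yields $\|W(t)f\|_{L^2_xL^\infty_{yT}}\lesssim\|f\|_{H^{1+}}$, matching the threshold $s>1$.

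The heart of the matter — and the step I expect to be the main obstacle — is the one–dimensional fibre estimate. A scaling and stationary–phase heuristic (choosing a measurable $t(x)$ solving $x+t(3\xi^2+|\eta|^2)=0$ so the phase becomes stationary at frequency $\lambda$) indicates that the sharp loss is only logarithmic, so an arbitrarily small power $H^{0+}$ suffices; crucially the relevant curvature $6t\xi$ does not see $|\eta|^2$, which is precisely why the estimate is uniform in $\eta$. To make this rigorous I would linearize the supremum through a measurable $t(x)\in[-T,T]$ and analyze the resulting $TT^*$ kernel $\int e^{i[(x-x')\xi+(t(x)-t(x'))(\xi^3+|\eta|^2\xi)]}\langle\xi\rangle^{0+}\,d\xi$ by van der Corput / Airy–function asymptotics, obtaining decay in $|x-x'|$ after a dyadic decomposition that isolates the degeneracy at $\xi=0$. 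The delicate point is that this kernel is genuinely oscillatory and fails to be absolutely integrable, so a naive Schur test with absolute values cannot close the argument; one must retain the oscillation, as in the oscillatory–integral estimates of Kenig–Ponce–Vega, to recover the $L^2_x$ bound with only an $\varepsilon$–loss of $x$–regularity.
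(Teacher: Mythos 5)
The paper does not actually prove this lemma: it simply cites Proposition 3.3 of Ribaud--Vento \cite{2012_Ribaud_3dZK}, where the estimate comes from genuinely three--dimensional oscillatory--integral kernel bounds after dyadic frequency localization. Your reduction to one--dimensional fibres is structurally fine up to the point where everything hinges on the claimed fibre estimate $\|U_\eta(t)h\|_{L^2_xL^\infty_{[-T,T]}}\lesssim\|h\|_{H^{0+}_x}$ uniformly in $\eta$, and that claim is false on two counts. First, already at $\eta=0$ the local--in--time $L^2_x$ maximal estimate for the Airy group costs $3/4$ of a derivative, not $0+$: taking $\widehat{h}=\chi_{[N,N+N^{1/2}]}$, for every $x$ in an interval of length $\sim N^2$ one can choose $t(x)\in[0,1]$ with $x+3t(x)N^2=O(N^{1/2})$, so that $|e^{-t(x)\partial_x^3}h(x)|\gtrsim N^{1/2}$; hence $\big\|\sup_{|t|\le1}|e^{-t\partial_x^3}h|\big\|_{L^2_x}\gtrsim N^{1/2}$ while $\|h\|_{H^s}\sim N^{s+1/4}$, forcing $s\ge 3/4$ (the sharp Kenig--Ponce--Vega threshold). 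Your heuristic that the loss is ``only logarithmic'' is contradicted by exactly the stationary--phase wave packet you describe. Second, the fibre estimate is not uniform in $\eta$: for maximal functions the group velocity matters, not only the curvature. With $\widehat{h}=\chi_{[0,1]}$ and $|\eta|=M$, your own shear identity gives $U_\eta(t)h(x)=[e^{-t\partial_x^3}h](x+tM^2)$, so $|U_\eta(t)h(x)|\gtrsim 1$ for every $x\in[-TM^2,0]$ upon choosing $t=-x/M^2$; thus $\|U_\eta(t)h\|_{L^2_xL^\infty_T}\gtrsim MT^{1/2}$ while $\|h\|_{H^s_x}\sim1$, so the constant grows at least linearly in $|\eta|$.

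Even if you insert the correct fibre input ($H^{3/4+}_x$ with a constant of size $\langle\eta\rangle$), the bookkeeping no longer closes at $s>1$: combined with the $\langle\eta\rangle^{1+}$ already spent on the Sobolev embedding in $y$, you would need roughly $\langle\eta\rangle^{2+}\langle\xi\rangle^{3/4+}$ per fibre plus an extra factor $\langle\eta\rangle^{2}$ from the nonuniformity, which in the worst case exceeds $\langle(\xi,\eta)\rangle^{3}$ of regularity. The steps $\sup_t\int\le\int\sup_t$ and the crude $L^1_\eta$ Sobolev embedding are precisely where the fibrewise strategy discards the multidimensional dispersion; to reach $s>1$ one must estimate the full three--dimensional oscillatory kernel, treating $\xi$ and $\eta$ jointly after a Littlewood--Paley decomposition, as is done in the cited reference.
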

\begin{proof}
\eqref{linear estimate:maximal} can be found from Proposition 3.3  in \cite{2012_Ribaud_3dZK}. 
\end{proof}

We need the following interpolation inequality, fractional Leibniz rule and Weighted Kato-Ponce inequality.

\begin{lemma}[see Lemma 2.7 in \cite{2020_Linares_blowupforZK}]\label{lem:interpolaion}
	Assume that $a, b>0$, $p\in(1,\infty)$ and  $\theta \in (0,1)$. If
	 $J^a f\in L^p(\mathbb{R}^n)$  and $\left< x\right>^b f\in L^p(\mathbb{R}^n)$, then
	\begin{equation}
		\|\left< x\right>^{(1-\theta)b}J^{\theta a}f\|_{L^p}\lesssim \|\left< x\right>^b f\|_{L^p}^{1-\theta}\|J^af\|_{L^p}^{\theta}. \label{lem:interpolaion1}
	\end{equation}
The same holds for homogeneous derivatives $D^a$ in place of $J^a$. Moreover, for $p=2$,
	\begin{equation}
		\|J^{\theta a}\big(\left< x\right>^{(1-\theta)b}f\big)\|_{L^2}\lesssim \|\left< x\right>^b f\|_{L^2}^{1-\theta}\|J^af\|_{L^2}^{\theta}. \label{lem:interpolaion2}
	\end{equation}
\end{lemma}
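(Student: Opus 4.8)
The plan is to prove both inequalities by complex interpolation, i.e. by Stein's interpolation theorem (equivalently a Hadamard three-lines argument) applied to an analytic family built from complex powers of the weight $\langle x\rangle$ and of the Bessel potential $J=(1-\Delta)^{1/2}$. The only genuinely analytic input will be the $L^p$-boundedness, with at most polynomial (admissible) growth in the imaginary direction, of the imaginary powers $J^{iya}$ and of their conjugates by the weight; everything else is bookkeeping on the strip $0\le\Re z\le 1$.

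I would carry out \eqref{lem:interpolaion1} first (general $p$). Fix a Schwartz function $f$ (the general case follows by density once the a priori bound is in hand) and a $g\in L^{p'}$ with $\|g\|_{p'}=1$, and set
\[
\Phi(z)=\int_{\mathbb{R}^n}\langle x\rangle^{(1-z)b}\,\big(J^{za}f\big)(x)\,g(x)\,dx,\qquad 0\le\Re z\le 1.
\]
For Schwartz $f$ this is analytic and of admissible growth on the strip, and $\Phi(\theta)=\langle\langle x\rangle^{(1-\theta)b}J^{\theta a}f,g\rangle$. On the right edge $z=1+iy$ the weight factor $\langle x\rangle^{-iyb}$ is unimodular and $J^{(1+iy)a}=J^{iya}J^{a}$, so Hölder together with the Mikhlin--Hörmander bound $\|J^{iya}h\|_p\lesssim(1+|y|)^N\|h\|_p$ gives $|\Phi(1+iy)|\lesssim(1+|y|)^N\|J^a f\|_p$. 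On the left edge $z=iy$ one has $|\Phi(iy)|\le\|\langle x\rangle^b J^{iya}f\|_p=\|\langle x\rangle^b J^{iya}\langle x\rangle^{-b}(\langle x\rangle^b f)\|_p$, so this boundary value is controlled by $\|\langle x\rangle^b f\|_p$ \emph{provided} the conjugated operator $\langle x\rangle^b J^{iya}\langle x\rangle^{-b}$ is bounded on $L^p$ with admissible growth in $y$. Granting both boundary bounds, Stein's interpolation theorem applies — the polynomial growth keeps the relevant Poisson integrals convergent — and yields $|\Phi(\theta)|\lesssim\|\langle x\rangle^b f\|_p^{1-\theta}\|J^a f\|_p^{\theta}$; taking the supremum over $g$ finishes \eqref{lem:interpolaion1}, and the homogeneous version is identical with $|D|^{iya}$ (imaginary Riesz powers) replacing $J^{iya}$. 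For the $L^2$ statement \eqref{lem:interpolaion2} I would run the same scheme on $\Phi(z)=\langle J^{za}(\langle x\rangle^{(1-z)b}f),g\rangle$ with $\|g\|_2=1$: now the left edge is completely clean, since $J^{iya}$ is an $L^2$-isometry and $\langle x\rangle^{-iyb}$ is unimodular, so $|\Phi(iy)|=\|\langle x\rangle^b f\|_2$ with no growth, while the right edge reduces (again using that $J^{iya}$ is an isometry) to $\|J^a(\langle x\rangle^{-iyb}f)\|_2\lesssim(1+|y|)^N\|J^a f\|_2$, i.e. to boundedness of $J^a\langle x\rangle^{-iyb}J^{-a}$ on $L^2$ with admissible growth.

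The main obstacle — the one step I expect to cost real work — is exactly this conjugated-imaginary-power bound: that $\langle x\rangle^b J^{iya}\langle x\rangle^{-b}$ (respectively $J^a\langle x\rangle^{-iyb}J^{-a}$) is bounded on $L^p$ (respectively $L^2$) with operator norm growing at most polynomially in $|y|$, and crucially for \emph{every} $b>0$. The $A_p$-weight route — viewing $J^{iya}$ as a Calderón--Zygmund operator acting on $L^p(\langle x\rangle^{bp}dx)$ — only covers the small range of $b$ for which $\langle x\rangle^{bp}$ is an $A_p$ weight, so for large $b$ I would instead use pseudodifferential calculus: write the conjugated operator as a zero-order operator whose symbol is produced by composing the symbols $\langle x\rangle^{zb}$ and $(1+|\xi|^2)^{za/2}$, verify that its symbol seminorms grow only polynomially in $y$ (the $x$-derivatives of $\langle x\rangle^{\pm b}$ gain decay, so each successive commutator is genuinely lower order), and conclude $L^p$-boundedness via Calderón--Vaillancourt for $p=2$ together with the standard Calderón--Zygmund/Mikhlin machinery for general $p$, keeping track of the $y$-dependence at each step. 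Once this admissible-growth estimate is secured, Stein interpolation plus a routine density argument (Schwartz functions are dense in the relevant weighted Sobolev space) complete both inequalities.
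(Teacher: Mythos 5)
The paper offers no proof of this lemma: it is imported verbatim (with the citation) from Lemma 2.7 of \cite{2020_Linares_blowupforZK}, whose proof in turn goes back to Nahas--Ponce and \cite{FLP15}. So the comparison is with that literature proof, and your proposal reconstructs essentially the same argument: a three-lines/Stein interpolation on the analytic family $\langle x\rangle^{(1-z)b}J^{za}f$, with the Mikhlin--H\"ormander bound $\|J^{iya}h\|_{p}\lesssim (1+|y|)^{N}\|h\|_{p}$ on the smooth edge and, for $p=2$, the unimodularity of $(1+|\xi|^2)^{iya/2}$ and $\langle x\rangle^{-iyb}$ making \eqref{lem:interpolaion2} nearly immediate. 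You also correctly isolate the one genuinely nontrivial ingredient, namely that $J^{iya}$ is bounded on $L^{p}(\langle x\rangle^{bp}dx)$ (equivalently that $\langle x\rangle^{b}J^{iya}\langle x\rangle^{-b}$ is bounded on $L^p$) with norm growing polynomially in $|y|$, for \emph{every} $b>0$ and not just in the $A_p$ range. Where you diverge from the standard treatment is in how you propose to prove this: your scattering-calculus/Calder\'on--Vaillancourt route is viable but considerably heavier than necessary. The elementary way is to first take $b=m$ a nonnegative integer and expand $x^{\alpha}J^{iya}=\sum_{\beta\le\alpha}c_{\alpha\beta}\,T_{\beta,y}\,x^{\alpha-\beta}$, where $T_{\beta,y}$ has symbol $\partial_{\xi}^{\beta}(1+|\xi|^2)^{iya/2}$ and is therefore again a Mikhlin multiplier with norm polynomial in $|y|$; this gives the weighted bound for all integer exponents. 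Fractional $b$ then follows from the Stein--Weiss interpolation theorem with change of measure applied to the \emph{fixed} operator $J^{iya}$ between $L^p(\langle x\rangle^{m_0 p})$ and $L^p(\langle x\rangle^{m_1 p})$, with the polynomial $y$-dependence carried through. With that replacement your outline is complete and correct (the analyticity, admissible growth, and density verifications you call bookkeeping are indeed routine for Schwartz $f$), so the main comment is that the step you flag as ``real work'' has a short classical proof, which is what the cited references use.
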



\begin{lemma}[see Theorem 1 in \cite{KPV1993}]\label{lem:Leibniz}
	Let $s\in(0,1)$ and $p\in (1,\infty)$. Then
	\begin{equation}
		\|D^s(f g)-f D^s g-g D^s f\|_{L^p(\mathbb{R})}\lesssim \|g\|_{L^\infty(\mathbb{R})}\|D^sf\|_{L^p(\mathbb{R})}. \label{lem:Leibniz-1}
	\end{equation}
Further more, we have
	\begin{equation}
		\|D^s(f g)\|_{L^p(\mathbb{R})}\lesssim \|f D^s g\|_{L^p(\mathbb{R})} + \|g\|_{L^\infty(\mathbb{R})}\|D^s f\|_{L^p(\mathbb{R})}. \label{lem:Leibniz-12}
	\end{equation}
\end{lemma}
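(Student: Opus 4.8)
The plan is to obtain the second inequality \eqref{lem:Leibniz-12} for free from the commutator estimate \eqref{lem:Leibniz-1} and to concentrate all the effort on the latter. Indeed, decomposing
\begin{equation}
D^s(fg)=\big(D^s(fg)-fD^sg-gD^sf\big)+fD^sg+gD^sf,\nonumber
\end{equation}
applying \eqref{lem:Leibniz-1} to the parenthesis and bounding $\|gD^sf\|_{L^p}\le\|g\|_{L^\infty}\|D^sf\|_{L^p}$ yields \eqref{lem:Leibniz-12} at once. For the commutator $\mathcal C:=D^s(fg)-fD^sg-gD^sf$ I would first record the pointwise singular-integral formula valid for $0<s<1$, namely $D^sh(x)=c_s\int_{\mathbb R}\big(h(x)-h(x-y)\big)|y|^{-1-s}\,dy$. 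Substituting it into all three summands of $\mathcal C$, the resulting terms recombine into a single product of first-order increments,
\begin{equation}
\mathcal C(x)=-c_s\int_{\mathbb R}\frac{\big(f(x)-f(x-y)\big)\big(g(x)-g(x-y)\big)}{|y|^{1+s}}\,dy,\nonumber
\end{equation}
so that the cancellation taming the kernel $|y|^{-1-s}$ is carried \emph{jointly} by the two differences.

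The tempting next move --- bounding $|g(x)-g(x-y)|\le 2\|g\|_{L^\infty}$ and reducing to the scalar operator $\mathcal D^sf(x):=\int_{\mathbb R}|f(x)-f(x-y)|\,|y|^{-1-s}\,dy$ --- is exactly the step I would avoid, and understanding why is the conceptual heart of the matter. The operator $\mathcal D^s$ is \emph{not} bounded from $\dot{H}^{s,p}$ to $L^p$: a suitably localized lacunary function $f=\sum_{j=1}^{K}2^{-js}e^{i2^jx}$ has $\|D^sf\|_{L^p}\approx K^{1/2}$, while each dyadic scale contributes comparably to $\mathcal D^sf$, producing $\|\mathcal D^sf\|_{L^p}\approx K$. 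Thus discarding the oscillation of $g$ loses a power of the number of active frequencies, because a bounded $g$ interacts efficiently with $D^sf$ only through the bilinear coupling of the two increments, not through its sup norm alone. The bilinear structure must therefore be kept throughout.

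The robust route, essentially the one behind \cite{KPV1993}, is to read $\mathcal C$ as a bilinear Fourier multiplier and invoke Coifman--Meyer theory. Writing the frequency of $f$ as $\xi$ and that of $g$ as $\eta$, the commutator has symbol
\begin{equation}
m(\xi,\eta)=|\xi+\eta|^{s}-|\xi|^{s}-|\eta|^{s},\nonumber
\end{equation}
and I would factor $m(\xi,\eta)=|\xi|^{s}\,\sigma(\xi,\eta)$ so that $\mathcal C=T_{\sigma}\!\big(D^sf,g\big)$. The point is that $\sigma$ is homogeneous of degree zero and, after a Littlewood--Paley localization splitting frequency space into the regimes $|\xi|\gtrsim|\eta|$ and $|\xi|\lesssim|\eta|$, obeys the Coifman--Meyer derivative bounds on each piece; the resulting bilinear estimate $\|T_\sigma(D^sf,g)\|_{L^p}\lesssim\|D^sf\|_{L^p}\|g\|_{L^\infty}$ is precisely \eqref{lem:Leibniz-1}. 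The main obstacle I anticipate is the analysis of $\sigma$ near its singular sets $\{\xi=0\}$ and $\{\xi+\eta=0\}$, where the degree-zero normalization degenerates: on $|\xi|\gtrsim|\eta|$ one checks directly that $|\xi+\eta|^s-|\eta|^s-|\xi|^s$ is $|\xi|^s$ times a smooth symbol (no $\xi=0$ singularity occurs there), while on $|\xi|\lesssim|\eta|$ one uses $\big||\xi+\eta|^s-|\eta|^s\big|\lesssim|\eta|^{s-1}|\xi|\le|\xi|^s$ together with the remaining term $-|\xi|^s$ to recover the same factorization, handling the near-cancellation as $\xi\to0$ by Taylor expanding $|\xi+\eta|^s$. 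Verifying these symbol bounds and assembling the Littlewood--Paley pieces is the bulk of the work; once they are in place, the Coifman--Meyer multiplier theorem closes the estimate.
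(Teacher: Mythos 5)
First, a point of order: the paper does not prove this lemma at all --- it is quoted from Kenig--Ponce--Vega \cite{KPV1993} with a bare citation --- so there is no in-paper proof to compare against, and your proposal must be judged against the standard literature argument (which is indeed a bilinear-multiplier/paraproduct proof of the general kind you outline). Several of your steps are correct and well-motivated: the derivation of \eqref{lem:Leibniz-12} from \eqref{lem:Leibniz-1} is immediate and right; the pointwise identity
\begin{equation}
D^s(fg)(x)-fD^sg(x)-gD^sf(x)=-c_s\int_{\mathbb{R}}\big(f(x)-f(x-y)\big)\big(g(x)-g(x-y)\big)|y|^{-1-s}\,dy
\nonumber
\end{equation}
is a correct computation for $s\in(0,1)$; and your warning that one may not simply bound $|g(x)-g(x-y)|\le 2\|g\|_{L^\infty}$ is exactly the right conceptual point --- the operator $f\mapsto\int|f(\cdot)-f(\cdot-y)||y|^{-1-s}dy$ costs an $\ell^1$ rather than $\ell^2$ sum over dyadic scales (an $\dot F^s_{p,1}$-type quantity), so it is not controlled by $\|D^sf\|_{L^p}$, and your lacunary example exhibits the $K$ versus $K^{1/2}$ discrepancy correctly.

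The gap is in the decisive step. After factoring $m(\xi,\eta)=|\xi|^s\sigma(\xi,\eta)$, you claim that upon splitting into the two regions $|\xi|\gtrsim|\eta|$ and $|\xi|\lesssim|\eta|$ the symbol $\sigma$ ``obeys the Coifman--Meyer derivative bounds on each piece.'' It does not. The symbol $\sigma$ is bounded and homogeneous of degree zero, but its derivatives blow up along \emph{three} lines, not the two you list: besides $\{\xi=0\}$ and $\{\xi+\eta=0\}$ there is $\{\eta=0\}$, which sits inside the region $|\xi|\gtrsim|\eta|$ where you assert smoothness. Indeed, near $\eta=0$ one has $\sigma\sim-|\eta|^s/|\xi|^s$, hence $|\partial_\eta\sigma|\sim|\eta|^{s-1}|\xi|^{-s}$, violating the Coifman--Meyer requirement $|\partial_\eta\sigma|\lesssim(|\xi|+|\eta|)^{-1}$ as $\eta\to0$; likewise $|\partial_\xi\sigma|\sim|\xi|^{-s}$ near $\xi=0$, and an analogous blow-up occurs along $\xi+\eta=0$. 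Consequently a two-region splitting cannot reduce the estimate to the Coifman--Meyer theorem. What is actually required --- and what constitutes essentially the whole content of the KPV appendix proof --- is a further dyadic (paraproduct) decomposition in the ratio of the smallest to the largest of $|\xi|,|\eta|,|\xi+\eta|$: on the piece where this ratio is $\approx 2^{-m}$, the singular part of $\sigma$ has size $O(2^{-m\min(s,1-s)})$ and satisfies Mikhlin-type bounds adapted to that annulus, so one applies a multiplier theorem piecewise and sums the geometric series. In addition, you invoke Coifman--Meyer with an $L^\infty$ input; the mapping $L^p\times L^\infty\to L^p$ is the bilinear Calder\'on--Zygmund endpoint, not the textbook Coifman--Meyer statement, and needs to be cited as such. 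In short, your architecture is the standard and correct one, but the symbol estimates are asserted exactly where they fail, and the missing paraproduct-with-geometric-decay argument is not a routine verification --- it is the proof.
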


\begin{lemma}[see Theorem 1.1 in \cite{2016_Cruz-Uribe_KatoPonce}] \label{Kato-Ponce-weight}
	Let $1<p, q<\infty$, $\frac{1}{2}<r<\infty$ such that $\frac{1}{r}=\frac{1}{p}+\frac{1}{q}$. If $v \in A_{p}$, $w \in A_{q}$, and $s>\max \left\{0, n\left(\frac{1}{r}-1\right)\right\}$ or $s$ is non-negative even, then for any $f, g \in \mathcal{S}\left(\mathbb{R}^{n}\right)$, we have
	\begin{align}
		\left\|D^{s}(f g)-f D^{s}(g)\right\|_{L^{r}\left(v^{\frac{r}{p}} w^{\frac{r}{q}}\right)}
		\lesssim\left\|D^{s} f\right\|_{L^{p}(v)}\|g\|_{L^{q}(w)}+\|\nabla f\|_{L^{p}(v)}\left\|D^{s-1} g\right\|_{L^{q}(w)},	\label{Kato-Ponce-weight1}
		\\
		\left\|J^{s}(f g)-f J^{s}(g)\right\|_{L^{r}\left(v^{\frac{r}{p}} w^{\frac{r}{q}}\right)}
		\lesssim\left\|J^{s} f\right\|_{L^{p}(v)}\|g\|_{L^{q}(w)}+\|\nabla f\|_{L^{p}(v)}\left\|J^{s-1} g\right\|_{L^{q}(w)},\label{Kato-Ponce-weight2}
	\end{align}
	where the constants depend on $p, q, s,[v]_{A_{p}}$  and $[w]_{A_{q}}$.
\end{lemma}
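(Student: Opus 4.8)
The plan is to establish \eqref{Kato-Ponce-weight1} and \eqref{Kato-Ponce-weight2} by a Littlewood--Paley paraproduct decomposition together with weighted square-function estimates. The two inequalities are proved in parallel, the only structural difference being that the inhomogeneous symbol $(1+|\xi|^2)^{s/2}$ is smooth across the origin (so the low-frequency output needs no separate care), hence I concentrate on \eqref{Kato-Ponce-weight1}. Fix a dyadic partition of unity, let $\Delta_j$ denote the frequency projection onto $\{|\xi|\sim 2^j\}$ and $S_j=\sum_{k\le j}\Delta_k$, and write $\tilde\Delta_j=\sum_{|k-j|\le 2}\Delta_k$. Decomposing $fg$ into the three paraproducts $\Pi_1(f,g)=\sum_j(\Delta_j f)(S_{j-2}g)$, $\Pi_2(f,g)=\sum_j(S_{j-2}f)(\Delta_j g)$ and $\Pi_3(f,g)=\sum_j(\Delta_j f)(\tilde\Delta_j g)$, I would split the commutator $D^s(fg)-fD^sg$ accordingly and treat the high--low piece $\Pi_1$, the low--high piece $\Pi_2$, and the diagonal piece $\Pi_3$ separately. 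When $s$ is a non-negative even integer, $D^s=(-\Delta)^{s/2}$ is a differential operator with polynomial symbol, the Leibniz rule is an exact algebraic identity, and the estimate follows with no restriction relating $s$ and $r$; so the substantive case is $s>\max\{0,n(1/r-1)\}$.

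For the low--high piece $\Pi_2$, where the commutator structure is essential, I would work on the Fourier side with the symbol $|\xi+\eta|^s-|\eta|^s$ restricted to $|\xi|\ll|\eta|$. The subtraction of $fD^sg$ exactly removes the leading order of $D^s\Pi_2$; what survives is governed by the first-order Taylor expansion $|\xi+\eta|^s-|\eta|^s=s|\eta|^{s-2}\eta\cdot\xi+O(|\xi|^2|\eta|^{s-2})$, which transfers one derivative from $g$ onto $f$. The resulting object is morally a finite sum of weighted Coifman--Meyer paraproducts of $\nabla f$ (localized near $2^{j'}$, $j'\ll j$) against $D^{s-1}g$-type factors (localized near $2^j$). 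I would bound it by $\|\nabla f\|_{L^p(v)}\|D^{s-1}g\|_{L^q(w)}$ using the weighted Littlewood--Paley equivalence $\|h\|_{L^p(v)}\sim\|(\sum_j|\Delta_j h|^2)^{1/2}\|_{L^p(v)}$, valid for $v\in A_p$ and $1<p<\infty$, together with the Fefferman--Stein vector-valued maximal inequality to absorb the low-frequency factor $S_{j-2}\nabla f$ into a maximal function before summing in $j$.

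The high--low piece $\Pi_1$ is simpler: here $D^s$ falls on the high-frequency factor $\Delta_j f$, the $fD^sg$ subtraction contributes nothing at leading order, and the same weighted square-function plus maximal-function machinery yields $\|D^sf\|_{L^p(v)}\|g\|_{L^q(w)}$. The diagonal piece $\Pi_3$, where $f$ and $g$ sit at comparable frequencies $2^j$ but the output frequency $2^m$ may be far lower, is the genuinely delicate term. Writing $D^s\Pi_3=\sum_m 2^{ms}\Delta_m\big(\sum_{j\gtrsim m}(\Delta_j f)(\tilde\Delta_j g)\big)$, one redistributes the derivative from the low output frequency onto the high input frequency, gaining the factor $2^{(m-j)s}\le 1$, and then sums the low output frequencies $m$. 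For $1<r<\infty$ this is harmless and one may even close the argument by dualizing the $L^r(v^{r/p}w^{r/q})$ norm; the difficulty is confined to $r<1$.

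I expect the main obstacle to be precisely this diagonal term in the quasi-Banach range $1/2<r<1$. There $L^r$ cannot be dualized and the triangle inequality degrades, so the summation of the low-frequency output of $\Pi_3$ must be carried out by a Fefferman--Stein maximal estimate adapted to exponents below $1$; it is exactly here that the hypothesis $s>n(1/r-1)$ is consumed, the extra derivatives furnishing the summable geometric factor that the quasi-norm cannot otherwise recover. Running this argument in the weighted setting additionally requires that the product weight $v^{r/p}w^{r/q}$ belong to the appropriate (multilinear) weight class and satisfy the reverse-H\"older/$A_\infty$ properties needed for the $r<1$ summations, while tracking the dependence of all constants on $[v]_{A_p}$ and $[w]_{A_q}$; verifying these weighted structural facts is the technically demanding part of the proof.
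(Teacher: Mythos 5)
The first thing to say is that the paper contains no proof of this lemma at all: it is imported verbatim as Theorem 1.1 of Cruz-Uribe and Naibo \cite{2016_Cruz-Uribe_KatoPonce} and used as a black box, so the only meaningful comparison is with the proof in that reference. Measured against that, your outline is a faithful reconstruction of the architecture such proofs actually have: paraproduct decomposition into high--low, low--high and diagonal pieces; the remark that for even non-negative $s$ the Leibniz rule is exact; commutator cancellation on the low--high piece via the expansion $|\xi+\eta|^{s}-|\eta|^{s}=s|\eta|^{s-2}\eta\cdot\xi+O\left(|\xi|^{2}|\eta|^{s-2}\right)$, which is what produces the term $\|\nabla f\|_{L^{p}(v)}\|D^{s-1}g\|_{L^{q}(w)}$ in \eqref{Kato-Ponce-weight1}; separate (cancellation-free) bounds for the high--low piece yielding $\|D^{s}f\|_{L^{p}(v)}\|g\|_{L^{q}(w)}$; and the correct identification of the diagonal piece in the quasi-Banach range $\frac12<r<1$ as the sole consumer of the hypothesis $s>n\left(\frac1r-1\right)$. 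As a strategy this is sound and correctly organized.

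However, two central steps are named rather than executed, and one of them is named incorrectly. There is no ``Fefferman--Stein maximal estimate adapted to exponents below $1$'': the Hardy--Littlewood maximal operator is unbounded on $L^{r}$ for $r\le 1$, so it cannot be the engine of the diagonal summation. The devices that actually close that sum are (i) a Bernstein/Nikolskii-type inequality for functions $F$ with Fourier support in a ball of radius $\sim 2^{j}$, of the form $\|\Delta_{m}F\|_{L^{r}}\lesssim 2^{(j-m)n(\frac1r-1)}\|F\|_{L^{r}}$ for $m\le j$, which makes the sum $\sum_{m\le j}2^{ms}\,2^{(j-m)n(\frac1r-1)}$ converge exactly when $s>n\left(\frac1r-1\right)$, and (ii) the Peetre-type pointwise bound $|h|\lesssim\left(M(|h|^{\delta})\right)^{1/\delta}$ for $\delta<r$, valid for compactly Fourier-supported $h$, which restores access to vector-valued maximal estimates at the exponent $r/\delta>1$. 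Second, the statement that $v^{r/p}w^{r/q}$ ``belongs to the appropriate weight class'' is not a verification but precisely the claim that must be proved: showing this product weight lies in $A_{\infty}$ with constant controlled by $[v]_{A_{p}}$ and $[w]_{A_{q}}$, and that the weighted square-function and maximal estimates survive in the quasi-Banach range with that weight, is a substantial portion of \cite{2016_Cruz-Uribe_KatoPonce}. So read as a roadmap your proposal matches the known proof; read as a proof it has genuine gaps at exactly the two places where the theorem is hard --- which puts it, in fairness, no further from a complete argument than the paper itself, since the paper outsources the entire proof to the citation.
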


\section{Well-posedness in weighted sobolev spaces}

In this section,  we focus on the local well-posedness  in weighted Sobolev space $Z_{s,(r_1,r_2)}$ for the three dimensional ZK equation.

Noting that $$e^{it\omega(\xi,\eta)}\partial_{\xi}\widehat{u_0}=\partial_\xi(e^{it\omega(\xi,\eta)}\widehat{u_0})-it(3\xi^2+\eta^2)e^{it\omega(\xi,\eta)}\widehat{u_0},$$
we get 
\begin{equation*}
	\begin{aligned}
		\quad W(t)\left(x u_{0}\right)
		&=i\mathscr{F}^{-1} \partial_{\xi}\left(e^{i t \omega(\xi,\eta )} \widehat{u}_{0}\right)+ t\mathscr{F}^{-1}\left(3 \xi^{2}+\eta^{2}\right) e^{i t \omega(\xi,\eta)} \widehat{u_{0}} \\
		&= xW(t)u_0-t W(t)\left(3 \partial_{x}^{2}+\partial_{y_1}^{2}+\partial_{y_2}^{2}\right) u_{0}.
	\end{aligned}
\end{equation*}

The above identity suggests that the regularity of the solution to ZK equation is twice the decay rate  of the solution.

However, what we used in Sobolev spaces is a fractional weight. In order to utilize all those estimates list in the previous section, we need to controll  the difference between $|x|^{r_1}W(t)u_0$ and $W(t)(|x|^{r_1}u_0)$ by $\|u_0\|_{H^s}$.

\begin{lemma}\label{lem:weightZK}
	Let $r_1,r_2\in (0, 1) $ and  $s\geqslant 2\max \{r_1,r_2\}$. Assume that  $u_0\in Z_{s,(r_1,r_2)} (\mathbb{R}^3)$, then for all $ t\in \mathbb{R}$ and almost every $(x,y_1,y_2)\in\mathbb{R}^3$, it holds that	
	\begin{equation}\label{eq:weight_x}
		|x|^{r_1}W(t)u_0=W(t)(|x|^{r_1}u_0)+W(t)\left(\left\{\Phi_{\xi,t,r_1}(\widehat{u_0})\right\}^{\vee}\right),
	\end{equation}
	\begin{equation}\label{eq:weight_y}
		|y_j|^{r_2}W(t)u_0=W(t)(|y_j|^{r_2}u_0)+W(t)\left(\left\{\Phi_{\eta_j,t,r_2}(\widehat{u_0})\right\}^{\vee}\right),
	\end{equation}
	with
\begin{align}
		\|\Phi_{\xi,t,r_1}(\widehat{u_0})\|_2&\lesssim (1+|t|)\|u_0\|_{H^s(\mathbb{R}^3)},\label{eq:error_x}\\
		\|\Phi_{\eta_j,t,r_2}(\widehat{u_0})\|_2&\lesssim (1+|t|)\|u_0\|_{H^s(\mathbb{R}^3)}, \label{eq:error_y}
	\end{align}
for $j=1,2$.
	Moreover, let $0<\beta \leq 1+\varepsilon$ for $\varepsilon\ll1$,  assume that $D^{\beta}(|x|^{r_1}u_0)$, $D^{\beta}(|y_j|^{r_2}u_0)\in L^2(\mathbb{R}^3)$ and  $u_0\in H^{s+\beta}(\mathbb{R}^3)$, then one has
	\begin{equation}\label{eq:weight_derive_x}
		D^{\beta}\left(|x|^{r_1}W(t)u_0\right)=W(t)(D^{\beta}|x|^{r_1}u_0)+W(t)\left(D^{\beta}\left\{\Phi_{\xi,t,r_1}(\widehat{u_0})\right\}^{\vee}\right),
	\end{equation}
	\begin{equation}\label{eq:weight_derive_y}
		D^{\beta}\left(|y_j|^{r_2}W(t)u_0\right)=W(t)(D^{\beta}|y_j|^{r_2}u_0)+W(t)\left(D^{\beta}\left\{\Phi_{\eta_j,t,r_2}(\widehat{u_0})\right\}^{\vee}\right),
	\end{equation}
	with
\begin{align}
		\|D^{\beta}\left\{\Phi_{\xi,t,r_1}(\widehat{u_0})\right\}^{\vee}\|_2&\lesssim (1+|t|)\|u_0\|_{H^{s+\beta}(\mathbb{R}^3)}, \label{eq:error_derive_x} \\
		\|D^{\beta}\left\{\Phi_{\eta_j,t,r_2}(\widehat{u_0})\right\}^{\vee}\|_2&\lesssim (1+|t|)\|u_0\|_{H^{s+\beta}(\mathbb{R}^3)}, 	\label{eq:error_derive_y}
	\end{align}
for $j=1,2$.
\end{lemma}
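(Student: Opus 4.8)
The plan is to carry out everything on the Fourier side, where multiplication by the fractional weight $|x|^{r_1}$ is conjugated by $\mathscr{F}$ into the one-dimensional fractional derivative $|D_\xi|^{r_1}$ acting in the $\xi$ variable alone (with $\eta$ a parameter), which admits the Stein pointwise representation $|D_\xi|^{r_1}g(\xi,\eta)=c_{r_1}\,\mathrm{p.v.}\int_{\mathbb{R}}\frac{g(\xi,\eta)-g(\zeta,\eta)}{|\xi-\zeta|^{1+r_1}}\,d\zeta$. Writing $g=\widehat{u_0}$ and recalling that $\mathscr{F}(W(t)f)=e^{it\omega}\widehat{f}$ with $\omega(\xi,\eta)=\xi(\xi^2+|\eta|^2)$, the error in \eqref{eq:weight_x} is, by definition, the commutator $\Phi_{\xi,t,r_1}(\widehat{u_0})=e^{-it\omega}|D_\xi|^{r_1}\big(e^{it\omega}\widehat{u_0}\big)-|D_\xi|^{r_1}\widehat{u_0}$. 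Substituting the Stein formula and factoring $e^{it\omega(\xi,\eta)}$ out of the numerator $e^{it\omega(\xi,\eta)}-e^{it\omega(\zeta,\eta)}$ yields the explicit kernel
\begin{equation}
\Phi_{\xi,t,r_1}(\widehat{u_0})(\xi,\eta)=c_{r_1}\int_{\mathbb{R}}\frac{\big(1-e^{it(\omega(\zeta,\eta)-\omega(\xi,\eta))}\big)\,\widehat{u_0}(\zeta,\eta)}{|\xi-\zeta|^{1+r_1}}\,d\zeta,\nonumber
\end{equation}
which simultaneously proves the identity \eqref{eq:weight_x}; formula \eqref{eq:weight_y} is identical upon replacing $|D_\xi|^{r_1}$ by the fractional derivative $|D_{\eta_j}|^{r_2}$ in the $\eta_j$ variable.

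The estimate \eqref{eq:error_x} rests on the factorization $\omega(\zeta,\eta)-\omega(\xi,\eta)=(\zeta-\xi)\,P$ with $P:=\zeta^2+\zeta\xi+\xi^2+|\eta|^2$: the phase difference contributes a full power of $\zeta-\xi$ that cancels the non-integrable part of the kernel, at the price of the quadratic factor $P\lesssim\langle(\xi,\eta)\rangle^{2}$. I would split the $\zeta$-integral at $|\xi-\zeta|=1$. On the far region $|\xi-\zeta|>1$ the crude bound $|1-e^{i\theta}|\le2$ leaves the $L^1$ kernel $\mathbf 1_{|\cdot|>1}|\cdot|^{-1-r_1}$, so Young's inequality in $\xi$ followed by integration in $\eta$ gives $\lesssim\|u_0\|_{L^2}$ with no growth in $t$. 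On the near region $|\xi-\zeta|\le1$, where $\langle(\zeta,\eta)\rangle\simeq\langle(\xi,\eta)\rangle$, I would absorb the weight by writing $\widehat{u_0}=\langle(\zeta,\eta)\rangle^{-2r_1}h$ and apply the Schur test to the modified kernel, using $|1-e^{it(\omega(\zeta,\eta)-\omega(\xi,\eta))}|\le\min\big(2,|t|\,|\zeta-\xi|\,\langle(\xi,\eta)\rangle^{2}\big)$ and splitting the radial integral $\int_0^1\rho^{-1-r_1}\min(2,|t|\rho\langle(\xi,\eta)\rangle^2)\,d\rho$ at the threshold $\rho_*\simeq(|t|\langle(\xi,\eta)\rangle^{2})^{-1}$ where the minimum switches. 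Both resulting pieces are $\lesssim|t|^{r_1}\langle(\xi,\eta)\rangle^{2r_1}$, so after cancelling $\langle(\xi,\eta)\rangle^{-2r_1}$ the row and column sums are $\lesssim|t|^{r_1}$, giving $\lesssim|t|^{r_1}\|h\|_{L^2}=|t|^{r_1}\|u_0\|_{H^{2r_1}}$. Combining the two regions establishes \eqref{eq:error_x} under $s\ge2r_1$. The factor $2$ is dictated by the degree of $\nabla\omega$; the same scheme with $\omega(\xi,\eta)-\omega(\xi,\widetilde\eta)=(\eta_j-\widetilde\eta_j)\,\xi(\eta_j+\widetilde\eta_j)$ gives \eqref{eq:error_y} under $s\ge2r_2$, whence the hypothesis $s\ge2\max\{r_1,r_2\}$.

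For the derivative statements I would apply the Fourier multiplier $D^{\beta}$, with symbol $(\xi^2+|\eta|^2)^{\beta/2}=|(\xi,\eta)|^{\beta}$, to the already-proven identity \eqref{eq:weight_x}; since $D^{\beta}$ commutes with $W(t)$, \eqref{eq:weight_derive_x} (and likewise \eqref{eq:weight_derive_y}) is immediate. For the bound \eqref{eq:error_derive_x} I re-run the previous estimate with the extra factor $|(\xi,\eta)|^{\beta}$ inside the $\Phi$-integral and use the elementary inequality $|(\xi,\eta)|^{\beta}\lesssim|(\zeta,\eta)|^{\beta}+|\xi-\zeta|^{\beta}$ (valid up to a constant for all $\beta\le1+\varepsilon$): the $|(\zeta,\eta)|^{\beta}$ term simply raises the required regularity from $H^{s}$ to $H^{s+\beta}$, while the $|\xi-\zeta|^{\beta}$ term only improves the integrability of the kernel, and one obtains \eqref{eq:error_derive_x}, \eqref{eq:error_derive_y}.

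I expect the main difficulty to be the near-region estimate and, within it, extracting the sharp exponent $2\max\{r_1,r_2\}$ rather than the wasteful $H^{2}$ produced by the crude bound $|1-e^{i\theta}|\le|\theta|$: because the quadratic factor $P$ comes from a first derivative of the cubic symbol $\omega$, one must balance the oscillation against the kernel singularity through the $\rho_*$-splitting above, and it is precisely this balance (and the resulting $|t|^{r_1}\lesssim1+|t|$) that yields the stated regularity. A secondary, more technical point is the rigorous justification of the Stein-derivative manipulations — that $\widehat{u_0}$ is regular enough for the pointwise formula and that the principal-value integrals converge — which is handled inside the weighted-Sobolev / Stein-derivative framework underlying Lemma \ref{lem:interpolaion}.
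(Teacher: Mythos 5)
Your derivation of the identities \eqref{eq:weight_x}--\eqref{eq:weight_y} and your proof of the base estimate \eqref{eq:error_x} follow essentially the same route as the paper: pass to the Fourier side, write the commutator through the Stein derivative with the explicit kernel $\bigl(1-e^{it(\omega(\zeta,\eta)-\omega(\xi,\eta))}\bigr)|\xi-\zeta|^{-1-r_1}$, and balance the oscillation bound $\min\bigl(2,|t|\,|\xi-\zeta|\,\langle(\xi,\eta)\rangle^{2}\bigr)$ against the singularity of the kernel. The paper does this in Lemma \ref{lem:weight-error} by Minkowski's inequality after the substitution $\tau=s(1+x_1^2+x_2^2+x_3^2)$, splitting at the $t$-independent threshold $|s|\simeq(1+|\textbf{x}|^2)^{-1}$, whereas you use a Schur test with the $t$-dependent threshold $\rho_*\simeq(|t|\langle(\xi,\eta)\rangle^{2})^{-1}$; both give $\lesssim(1+|t|)\|u_0\|_{H^{2r_1}}$ and the difference is cosmetic.

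The genuine gap is in the derivative estimates \eqref{eq:error_derive_x}--\eqref{eq:error_derive_y}, exactly in the regime $\beta\geq r_1$ that this lemma is meant to add (recall $\beta$ runs up to $1+\varepsilon$ while $r_1<1$; the case $\beta<\min\{r_1,r_2\}$ was already in \cite{FLP15,2016_Fonseca}, as the paper's Remark points out). After your splitting $|(\xi,\eta)|^{\beta}\lesssim|(\zeta,\eta)|^{\beta}+|\xi-\zeta|^{\beta}$, the claim that the $|\xi-\zeta|^{\beta}$ term ``only improves the integrability of the kernel'' is true near the diagonal but false on the far region $|\xi-\zeta|>1$: there the kernel becomes $|\xi-\zeta|^{\beta-1-r_1}\mathbf{1}_{|\xi-\zeta|>1}$, which is not in $L^{1}(\mathbb{R})$ once $\beta\geq r_1$, so your far-region argument (crude bound $|1-e^{i\theta}|\leq 2$ plus Young with an $L^1$ kernel) no longer closes; the corresponding convolution operator is in fact unbounded on $L^2_\xi$, and for $r_1\leq\frac12$ and $\beta$ near $1$ its output need not even be square-integrable for Schwartz data. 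This is precisely what the extra hypothesis and the extra term $\|f\|_{L^p_{x_kx_l}L^q_{x_j}}$ in \eqref{lemlem2b} are for: in Case 1 of the paper's proof (estimate \eqref{lemlem2b1}) the factor $|s|^{1+}$ is paired with Young's inequality for the kernel $|s|^{-\alpha+}\mathbf{1}_{|s|\geq 1/100}\in L^{\sigma}$, $\sigma>1/\alpha$, tested against a sub-$L^2$ norm with $\frac1q>\frac1p+1-\alpha$, which on the Fourier side means $\|\widehat{u_0}\|_{L^2_\eta L^q_\xi}$ with $q<2$ and must then be absorbed into a Sobolev norm of $u_0$ via the decay of $\widehat{u_0}$. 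You need to supply this ingredient (or an equivalent substitute) for the far region; as written, that step of your argument fails.
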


\begin{remark}
 Analogous results were established  for the $k$-generalized KdV equation \cite{FLP15} and  the 2D ZK equation  \cite{2016_Fonseca}. But, in \cite{FLP15, 2016_Fonseca}, \eqref{eq:weight_derive_x}-\eqref{eq:error_derive_y} were proved  under the condition $0<\beta< \min \{r_1,r_2\}<1$. We extend it to the case $0<\beta\leq 1+\varepsilon$ by using the following lemma,  especially \eqref{lemlem2b}.
\end{remark}

\begin{lemma}\label{lem:weight-error}
Let $\alpha \in(0,1)$ and  $p \in(1, \infty)$. Assume that $f \in L^{p}\left(\mathbb{R}^{3}\right) $ and $f \in L^{p}\left((1+x_1^{2}+x_2^{2}+x_3^2)^{\alpha p} \mathrm{d} x_1 \mathrm{d}x_2\mathrm{d}x_3 \right)$.
	For $t \in \mathbb{R}$, $\textbf{x}=\left(x_1,x_2,x_3\right) \in \mathbb{R}^{3}$,  let us denote
	\begin{equation*}
		\phi_{j, t, \alpha}(f)\left(x_{1}, x_{2}, x_3\right)=\lim _{\varepsilon \to 0} C_\alpha \int_{|s| \geqslant \varepsilon} \left(e^{i t\left(\omega(\textbf{x}+\textbf{e}_j s)-\omega(\textbf{x})\right)}-1\right)|s|^{-(1+\alpha)} f\left(\textbf{x}+\textbf{e}_j s\right) \mathrm{d} s
	\end{equation*}
 where $\omega(\textbf{x})=x_1(x_1^2+x_2^2+x_3^2)$ and $\textbf{e}_j$ denotes the unit vector in the $x_j$ direction 	for $j=1,2,3$. Then
	\begin{equation}
		\left\|\phi_{j, t, \alpha}(f)\right\|_{L^p} \leqslant c_{\alpha}(1+|t|)\left(\|f\|_{L^p}+\left\|\left(1+x_{1}^{2}+x_2^2+x_3^2\right)^{\alpha} f\right\|_{L^p}\right) \label{lemlem2a}
	\end{equation}
for $j=1,2,3$.  Moreover, if $f \in L^{p}\left((1+x_1^{2}+x_2^{2}+x_3^2)^{(\alpha+1/2) p+} \mathrm{d} x_1 \mathrm{d}x_2\mathrm{d}x_3 \right)$ and $f \in L^p_{x_kx_l}L^q_{x_j}$ where $(j,k,l)$ is a permutation of $(1,2,3)$, then we have 
\begin{align}
		&\left\|(1+x_1^{2}+x_2^{2}+x_3^2)^{1/2+}\phi_{j, t, \alpha}(f)\right\|_{L^p}  \notag \\
\leqslant &c_{\alpha}(1+|t|)\left(\left\|(1+x_{1}^{2}+x_2^2+x_3^2)^{\alpha+1/2+} f\right\|_{L^p}+\|f\|_{L^p_{x_kx_l}L^q_{x_j}}\right) \label{lemlem2b}
	\end{align}
for $j=1,2,3$, where $\frac{1}{q}>\frac{1}{p}+1-\alpha$. 
\end{lemma}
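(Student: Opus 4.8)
The plan is to treat $\phi_{j,t,\alpha}$ as a (principal value, but in fact absolutely convergent) singular integral and to exploit the oscillation of the phase to beat down the weight exponent. The one place where the specific three–dimensional dispersion relation enters is the elementary estimate for the phase difference $\Delta_j\omega(\textbf{x},s):=\omega(\textbf{x}+\textbf{e}_js)-\omega(\textbf{x})$. Expanding $\omega(\textbf{x})=x_1(x_1^2+x_2^2+x_3^2)$ coordinatewise, one finds $\Delta_1\omega=s(3x_1^2+x_2^2+x_3^2)+3x_1s^2+s^3$ and, for $j=2,3$, $\Delta_j\omega=x_1(2x_js+s^2)$; in every case $|\Delta_j\omega(\textbf{x},s)|\lesssim|s|(1+|\textbf{x}|^2)$ whenever $|s|\le1$. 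After this, the argument runs as in the Nahas--Ponce/FLP scheme of \cite{FLP15,2016_Fonseca}, adapted to the cubic phase.

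For \eqref{lemlem2a} I split $\phi_{j,t,\alpha}=\phi^{\mathrm{near}}+\phi^{\mathrm{far}}$ according to $|s|\le1$ and $|s|>1$. In the far region $|e^{i\theta}-1|\le2$ and $|s|^{-(1+\alpha)}$ is integrable, so Minkowski's integral inequality gives $\|\phi^{\mathrm{far}}\|_{L^p}\lesssim\|f\|_{L^p}$. The near region is the heart of the matter. Since $|e^{it\Delta_j\omega}-1|\lesssim|t||\Delta_j\omega|\lesssim|t||s|(1+|\textbf{x}|^2)$, the integrand is absolutely integrable near $s=0$, but this crude bound only yields the weight $(1+|\textbf{x}|^2)$, whereas we want the smaller power $(1+|\textbf{x}|^2)^\alpha$. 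To gain it I split once more at the phase scale $\lambda:=(|t|(1+|\textbf{x}|^2))^{-1}$: for $|s|\le\lambda$ I keep the linear bound $|e^{it\Delta_j\omega}-1|\lesssim|t||s|(1+|\textbf{x}|^2)$, while for $\lambda<|s|\le1$ I use the oscillation through $|e^{it\Delta_j\omega}-1|\le2$. A direct computation shows both annuli contribute the factor $(|t|(1+|\textbf{x}|^2))^\alpha\lesssim(1+|t|)(1+|\textbf{x}|^2)^\alpha$ against an essentially $L^1$-normalised kernel in $s$. Using $(1+|\textbf{x}|^2)\approx(1+|\textbf{x}+\textbf{e}_js|^2)$ for $|s|\le1$ to move the weight onto $f$, Minkowski then gives the weighted $L^p$ bound.

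For \eqref{lemlem2b} I rerun the decomposition after multiplying by $(1+|\textbf{x}|^2)^{1/2+}$. In the near region $|s|\le1$ the extra weight again transfers directly onto $f$ (by $(1+|\textbf{x}|^2)\approx(1+|\textbf{x}+\textbf{e}_js|^2)$), so the near–region analysis above upgrades the weight to $(1+|\textbf{x}|^2)^{\alpha+1/2+}$ and produces the first term on the right of \eqref{lemlem2b}. The far region is where the mixed norm is forced: I use $(1+|\textbf{x}|^2)^{1/2+}\lesssim(1+|\textbf{x}+\textbf{e}_js|^2)^{1/2+}+\langle s\rangle^{1+}$. The first summand keeps the integrable kernel $|s|^{-(1+\alpha)}$ and gives a weighted $L^p$ contribution absorbed into the first term, whereas the second summand leaves the non-integrable kernel $|s|^{-(1+\alpha)}\langle s\rangle^{1+}\sim|s|^{-\alpha+}$. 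This last convolution in the $x_j$ variable is handled by Young's inequality in $x_j$ followed by $L^p$ in the transverse variables $(x_k,x_l)$: with $\tfrac1r=1+\tfrac1p-\tfrac1q$ one needs $|s|^{-\alpha+}\mathbf 1_{|s|>1}\in L^r$, i.e. $(\alpha-)r>1$, which is exactly the hypothesis $\tfrac1q>\tfrac1p+1-\alpha$ and delivers the term $\|f\|_{L^p_{x_kx_l}L^q_{x_j}}$.

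The main obstacle is the near–region estimate for \eqref{lemlem2a}: because the kernel is only of order $|s|^{-1}$ after the linear-in-$s$ approximation of the phase, a plain interpolation fails, and reducing the weight exponent from the trivial value $2$ down to the sharp $2\alpha$ genuinely relies on combining the oscillation $|e^{it\Delta_j\omega}-1|\le2$ with the $\textbf{x}$-dependent splitting scale $\lambda=(|t|(1+|\textbf{x}|^2))^{-1}$. The secondary difficulty is the far–region bookkeeping in \eqref{lemlem2b}: splitting the weight between $f$ and the kernel is precisely what turns the non-integrable tail into a Young-type convolution and pins down the admissible range of $q$.
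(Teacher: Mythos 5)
Your proposal is correct and follows essentially the same route as the paper: a near/far split in $s$, the linear phase bound $|e^{it\Delta_j\omega}-1|\lesssim|t||s|(1+|\textbf{x}|^2)$ versus the trivial bound $2$ on either side of a threshold comparable to $(1+|\textbf{x}|^2)^{-1}$ (the paper factors out $|t|$ rather than putting it in the splitting scale, which is immaterial), and in the far region the weight decomposition $(1+|\textbf{x}|^2)^{1/2+}\lesssim(1+|\textbf{x}+\textbf{e}_js|^2)^{1/2+}+|s|^{1+}$ followed by a Young/Hardy--Littlewood--Sobolev bound in $x_j$ yielding the mixed norm under exactly the condition $\frac1q>\frac1p+1-\alpha$. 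The only cosmetic difference is that you prove \eqref{lemlem2a} in full, whereas the paper defers it to \cite{FLP15,2016_Fonseca} and writes out only \eqref{lemlem2b}.
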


\begin{proof}
\eqref{lemlem2a} can be obtained by using a similar method provided in  \cite{FLP15}, see also Lemma 1 in \cite{2016_Fonseca}. The proof for \eqref{lemlem2b} is slightly different from that  in  \cite{FLP15}, see  \eqref{lemlem2b1}. 

Let us divide the integral in the expression of $\phi_{j, t, \alpha}(f)$ into three cases depending on whether $s$ is close or far away from the origin.

\vspace{2mm}
\noindent {\bf{Case 1. }} $|s|\geq\frac{1}{100}$

\vspace{1mm}
In this case, there is no singularity for $|s|^{-(1+\alpha)}$. Hence, we can bound
\begin{align}\left|e^{i t\left(\omega(\textbf{x}+\textbf{e}_j s)-\omega(\textbf{x})\right)}-1\right|\leq2.\label{lemlem2bbb0}\end{align}
By Minkowski's  inequality and Hardy-Littlewood-Sobolev inequality, one gets
\begin{align}
&\left\|\int_{|s| \geq 1/100} \left(e^{i t\left(\omega(\textbf{x}+\textbf{e}_j s)-\omega(\textbf{x})\right)}-1\right)|s|^{-(1+\alpha)}(1+x_1^{2}+x_2^{2}+x_3^2)^{1/2+} f(\textbf{x}+\textbf{e}_j s) \mathrm{d} s\right\|_{L^p} \notag \\
\leq& 2\left\|\int_{|s| \geq 1/100} |s|^{-(1+\alpha)}(1+x_1^{2}+x_2^{2}+x_3^2)^{1/2+} |f(\textbf{x}+\textbf{e}_j s)| \mathrm{d} s\right\|_{L^p}\notag \\
\leq& 4\int_{|s| \geq 1/100}\frac{\left\|(1+|\textbf{x}+\textbf{e}_j s|^2)^{1/2+} f(\textbf{x}+\textbf{e}_j s) \right\|_{L^p}}{|s|^{(1+\alpha)}} \mathrm{d} s+4\left\|\int\frac{|s|^{1+}|f(\textbf{x}+\textbf{e}_j s)| }{|s|^{(1+\alpha)}} \mathrm{d} s\right\|_{L^p} \notag \\
\leq& c_{\alpha} \left(\|(1+x_1^{2}+x_2^{2}+x_3^2)^{1/2+}f\|_{L^p}+\|f\|_{L^p_{x_2x_3}L^q_{x_1}}\right) \label{lemlem2b1}
\end{align}
with $\frac{1}{q}>\frac{1}{p}+1-\alpha$.

\vspace{2mm}
\noindent {\bf{Case 2. }} $|s|\leq\frac{1}{100}$ and $x_1^2+x_2^2+x_3^2<100$

\vspace{1mm}
In this case, we have the bound
\begin{align}\left|e^{i t\left(\omega(\textbf{x}+\textbf{e}_j s)-\omega(\textbf{x})\right)}-1\right|\leq |t||\omega(\textbf{x}+\textbf{e}_j s)-\omega(\textbf{x})|\leq 4(1+x_1^2+x_2^2+x_3^2)|ts|.\label{lemlem2bb0}\end{align}
Then, it follows from Minkowski’s inequality that
\begin{align}
&\left\|\int_{|s| \leq 1/100} \left(e^{i t\left(\omega(\textbf{x}+\textbf{e}_j s)-\omega(\textbf{x})\right)}-1\right)|s|^{-(1+\alpha)}(1+x_1^{2}+x_2^{2}+x_3^2)^{1/2+} f(\textbf{x}+\textbf{e}_j s) \mathrm{d} s\right\|_{L^p} \notag \\
\leq& 404|t|\left\|\int_{|s| \leq 1/100} |s|^{-\alpha}(1+x_1^{2}+x_2^{2}+x_3^2)^{1/2+} |f(\textbf{x}+\textbf{e}_j s)| \mathrm{d} s\right\|_{L^p}\notag \\
\leq& 800|t|\int_{|s| \leq 1/100}\frac{\left\|(1+|\textbf{x}+\textbf{e}_j s|^2)^{1/2+} f(\textbf{x}+\textbf{e}_j s) \right\|_{L^p}}{|s|^{\alpha}} \mathrm{d} s \notag \\
\leq& c_{\alpha} |t|\|(1+x_1^{2}+x_2^{2}+x_3^2)^{1/2+}f\|_{L^p}. \label{lemlem2b2}
\end{align}

\vspace{2mm}
\noindent {\bf{Case 3. }} $|s|\leq\frac{1}{100}$ and $x_1^2+x_2^2+x_3^2\geq100$

\vspace{1mm}
We further sub-divide this case into two sub-cases:
$$(a)  \hspace{2mm}|s|\leq \frac{1}{1+x_1^2+x_2^2+x_3^2}, \hspace{10mm}(b) \hspace{2mm} |s|\geq \frac{1}{1+x_1^2+x_2^2+x_3^2}.$$

\noindent {\bf{Case 3 (a). }} $x_1^2+x_2^2+x_3^2\geq100$  and $|s|\leq \frac{1}{1+x_1^2+x_2^2+x_3^2}<\frac{1}{100}$

\vspace{1mm}
Using \eqref{lemlem2bb0} and variable substitution $\tau=s(1+x_1^2+x_2^2+x_3^2)$, we obtain 
\begin{align}
&\left\|\int_{|s| \leq 1/100} \left(e^{i t\left(\omega(\textbf{x}+\textbf{e}_j s)-\omega(\textbf{x})\right)}-1\right)|s|^{-(1+\alpha)}(1+x_1^{2}+x_2^{2}+x_3^2)^{1/2+} f(\textbf{x}+\textbf{e}_j s) \mathrm{d} s\right\|_{L^p} \notag \\
\leq& 4|t|\left\|\int_{|\tau| \leq 1 }(1+x_1^{2}+x_2^{2}+x_3^2)^{\alpha+1/2+}|\tau|^{-\alpha}|f(\textbf{x}+\textbf{e}_j \tau(1+x_1^2+x_2^2+x_3^2)^{-1})| \mathrm{d} \tau\right\|_{L^p}\notag \\
\leq& 8|t|\int_{|\tau| \leq 1}\frac{\left\|(1+|\textbf{x}+\textbf{e}_j s|^2)^{\alpha+1/2+} f(\textbf{x}+\textbf{e}_j s) \right\|_{L^p}}{|\tau|^{\alpha}} \mathrm{d} \tau \notag \\
\leq& c_{\alpha} |t|\|(1+x_1^{2}+x_2^{2}+x_3^2)^{\alpha+1/2+}f\|_{L^p}. \label{lemlem2b3}
\end{align}

\noindent {\bf{Case 3 (b). }} $ \frac{1}{1+x_1^2+x_2^2+x_3^2}\leq |s|\leq\frac{1}{100}$

\vspace{1mm}
Using \eqref{lemlem2bbb0} and the same variable substitution $\tau=s(1+x_1^2+x_2^2+x_3^2)$ as above, one gets 
\begin{align}
&\left\|\int_{|s| \leq 1/100} \left(e^{i t\left(\omega(\textbf{x}+\textbf{e}_j s)-\omega(\textbf{x})\right)}-1\right)|s|^{-(1+\alpha)}(1+x_1^{2}+x_2^{2}+x_3^2)^{1/2+} f(\textbf{x}+\textbf{e}_j s) \mathrm{d} s\right\|_{L^p} \notag \\
\leq& 4 \left\|\int_{1\leq|\tau| \leq\frac{1+x_1^{2}+x_2^{2}+x_3^2}{100} }(1+x_1^{2}+x_2^{2}+x_3^2)^{\alpha+1/2+}|\tau|^{-1-\alpha}|f(\textbf{x}+\textbf{e}_js)| \mathrm{d} \tau\right\|_{L^p}\notag \\
\leq& 8 \int_{1\leq|\tau|}\frac{\left\|(1+|\textbf{x}+\textbf{e}_j s|^2)^{\alpha+1/2+} f(\textbf{x}+\textbf{e}_j s) \right\|_{L^p}}{|\tau|^{1+\alpha}} \mathrm{d} \tau \notag \\
\leq& c_{\alpha} \|(1+x_1^{2}+x_2^{2}+x_3^2)^{\alpha+1/2+}f\|_{L^p}. \label{lemlem2b4}
\end{align}

Then, by collecting \eqref{lemlem2b1}-\eqref{lemlem2b4}, we get the desired estimate \eqref{lemlem2b}. 
\end{proof}

Now we apply this lemma to show Lemma \ref{lem:weightZK}.

\vspace{1mm}
{\bf{Proof of Lemma \ref{lem:weightZK}.}} By the definition of Stein derivation \cite{Stein_HarmonicAnalysis}, one has
	\begin{align}
		& |x|^{r_1}W(t)u_0-W(t)(|x|^{r_1}u_0) \notag \\
		=&\mathscr{F}^{-1}\left(D_\xi^{r_1}(e^{i t\omega(\xi,\eta)} \widehat{u_0})-e^{i t\omega(\xi,\eta)} D_\xi^{r_1} \widehat{u_0}\right)\notag  \\
		=&\mathscr{F}^{-1}\lim_{\varepsilon\to 0}C_{r_1}\int_{|s|\geq\varepsilon}
		\left(e^{it\omega(\xi+s,\eta)}\widehat{u_0}(\xi+s,\eta)-e^{it\omega(\xi,\eta)}\widehat{u_0}(\xi+s,\eta)\right)|s|^{-(1+r_1)}\mathrm{d}s\notag  \\
		=&C_{r_1}\mathscr{F}^{-1}e^{it\omega(\xi,\eta)}\lim_{\varepsilon\to 0}\int_{|s|\geq\varepsilon}
		\left(e^{it\left(\omega(\xi+s,\eta)-\omega(\xi,\eta)\right)}-1\right)|s|^{-(1+r_1)}\widehat{u_0}(\xi+s,\eta)\mathrm{d}s\notag  \\
=&W(t)\left(\left\{\Phi_{\xi,t,r_1}(\widehat{u_0})\right\}^{\vee}\right),\label{WPhi}
	\end{align}
where
$$\Phi_{\xi, t, r_{1}}\left(\widehat{u_0}\right)=C_{r_1}\lim_{\varepsilon\to 0}\int_{|s|\geq\varepsilon}
		\left(e^{it\left(\omega(\xi+s,\eta)-\omega(\xi,\eta)\right)}-1\right)|s|^{-(1+r_1)}\widehat{u_0}(\xi+s,\eta)\mathrm{d}s.$$

Moreover, for $\beta \in\left(0, r_{1}\right)$, it is easy to verify that
	\begin{align}
		&\quad D_{x}^{\beta}\left(|x|^{r_{1}} W(t) u_{0}\right)- W(t)\left(D_{x}^{\beta}|x|^{r_{1}} u_{0}\right) \notag \\
		&=\mathscr{F}^{-1}|\xi|^{\beta}D_{\xi}^{r_1}e^{it\omega(\xi,\eta)}\widehat{u_0}-\mathscr{F}^{-1}e^{it\omega(\xi,\eta)}|\xi|^{\beta}D_{\xi}^{r_1}\widehat{u_0}  \notag\\
		&=\mathscr{F}^{-1}|\xi|^{\beta}e^{it\omega(\xi,\eta)}\Phi_{\xi,t,r_1}  \notag\\
		&=W(t)\left(D^{\beta}\left\{\Phi_{\xi,t,r_1}(\widehat{u_0})\right\}^{\vee}\right). \label{DWPhi}
	\end{align}
From \eqref{WPhi} and \eqref{DWPhi}, we see that \eqref{eq:weight_x} and \eqref{eq:weight_derive_x} hold true. 

Next, we show \eqref{eq:error_x} and \eqref{eq:error_derive_x} respectively. It follows from Lemma \ref{lem:weight-error} that  
	\begin{equation}
		\left\|\Phi_{\xi, t, r_{1}}\left(\widehat{u}_{0}\right)\right\|_{2}  \lesssim (1+|t|)\left(\left\|\widehat{u}_{0}\right\|_{2}+\left\|\left(1+\xi^{2}+\eta^{2}\right)^{r_{1}} \widehat{u}_{0}\right\|_{2}\right) \lesssim (1+|t|)\|u_0\|_{H^s(\mathbb{R}^3)} \nonumber
	\end{equation}
	provided $2r_1<s$.
	
By Parseval's identity and the proof of \eqref{lemlem2b},  we may then deduce that
	\begin{align}	
\|D^{\beta}\left\{\Phi_{\xi,t,r_1}(\widehat{u_0})\right\}^{\vee}\|_2&=\left\||\xi|^{\beta}\Phi_{\xi,t,r_1}(\widehat{u_0})\right\|_2 \notag \\
&\lesssim (1+|t|)\left(\|(1+\xi^2+\eta^2)^{r_1+\beta/2}\widehat{u_0}\|_{2}+\|\widehat{u_0}\|_{2}\right)\notag \\
&\lesssim (1+|t|)\|u_0\|_{H^{s+\beta}}\nonumber
	\end{align}
for $0<\beta \leq 1+\varepsilon$ and $s\geq2r_1$. 

Other estimates can be verified parallelly. Thus we finish the proof. \hspace{19mm}$\square$

Let us turn to the well-posedness for 3D ZK in weighted spaces. We define the work space as
$$X_T=\Big\{u\in C([0,T];Z_{s,(r_1,r_2)}):\|u\|_{X_T} <\infty \Big\}$$
where
	\begin{align}
		\|u\|_{X_T}= &
		\|u\|_{L^{\infty}_TH^s_{xy}}+\big\||x|^{r_1}u\big\|_{L^{\infty}_TL^2_{xy}}+\big\||y|^{r_2}u\big\|_{L^{\infty}_TL^2_{xy}}+\|u_x\|_{L^{2}_{T}L^{\infty}_{xy}}+\|u_{xx}\|_{L^{2}_{T}L^{\infty}_{xy}}\notag \\
&+\|D^{s-1}_{x}u\|_{L_T^{2}L^{\infty}_{xy}}+\sum_{j=1}^{2}\|D^{s-2}_{y_j}u_x\|_{L_T^{2}L^{\infty}_{xy}}+\sum_{j=1}^{2}\|u_{xy_j}\|_{L^{2}_{T}L^{\infty}_{xy}}+\|u\|_{L^{2}_xL^{\infty}_{yT}}\notag \\
		&+\|u_x\|_{L^{2}_xL^{\infty}_{yT}}+\sum_{j=1}^{2}\|u_{y_j}\|_{L^{2}_xL^{\infty}_{yT}}+\|D^{s-2}_{x}u\|_{L^{2}_xL^{\infty}_{yT}}+\sum_{j=1}^{2}\|D^{s-2}_{y_j}u\|_{L^{2}_xL^{\infty}_{yT}}\notag \\
&+\|D_x^su_x\|_{L^{\infty}_xL^{2}_{yT}}
		+\sum_{j=1}^{2}\|D_{y_j}^su_x\|_{L^{\infty}_xL^{2}_{yT}}+\|u_{xxx}\|_{L^{\infty}_xL^{2}_{yT}}+\sum_{j=1}^{2}\|D_{y_j}^2u_x\|_{L^{\infty}_xL^{2}_{yT}}\nonumber
	\end{align}
with $2<s<3$.

Local well-posedness for 3D ZK equation in weighted Sobolev spaces $Z_{s,(r_1,r_2)}$ can be accomplished by using the contraction
principle. In fact, we will show that the operator $$\mathscr{T} :u \mapsto W(t)u_0-\int^t_0W(t-t')(u\partial_xu)(t')\mathrm{d}t'.$$
is a contraction mapping on $X_T$ for some $T>0$.

\begin{lemma}\label{lwplem}
Let $2<s<3$. For $u\in X_T$, let us denote
$$ z_1(t)=\int^t_0 W(t-t')(u\partial_x u)(t')\mathrm{d}t'.$$
Then, we have
\begin{align}
\|z_1(t)\|_{L^\infty_T H^s_{xy}}\lesssim T^{1/2}\|u\|_{X_T}^2 .\label{lwplem0}
	\end{align}
\end{lemma}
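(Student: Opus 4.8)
The plan is to estimate $\|z_1(t)\|_{L^\infty_T H^s_{xy}}$ directly, where $z_1(t) = \int_0^t W(t-t')(u\partial_x u)(t')dt'$. Since $W(t)$ is a unitary group on $H^s$, the spatial Sobolev norm is preserved under the propagator, and the natural first move is to pass the $H^s$ norm inside the time integral via Minkowski's integral inequality:
\begin{align}
\|z_1(t)\|_{L^\infty_T H^s_{xy}} &\lesssim \sup_{t\in[0,T]}\int_0^t \|W(t-t')(u\partial_x u)(t')\|_{H^s_{xy}}\,dt' \notag \\
&= \sup_{t\in[0,T]}\int_0^t \|(u\partial_x u)(t')\|_{H^s_{xy}}\,dt' \leq \int_0^T \|(u\partial_x u)(t')\|_{H^s_{xy}}\,dt'. \nonumber
\end{align}
Then Cauchy--Schwarz in $t'$ produces the gain $T^{1/2}$, reducing matters to bounding $\|u\partial_x u\|_{L^2_T H^s_{xy}}$ by $\|u\|_{X_T}^2$.

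\medskip

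Next I would control $\|u\partial_x u\|_{H^s_{xy}}$ using the product/fractional-Leibniz structure, writing $J^s(u\partial_x u)$ and distributing the $s$ derivatives. The heuristic is that the worst terms place the full $s$ derivatives on one factor while measuring the other in $L^\infty_{xy}$. Concretely, after applying the fractional Leibniz rule (Lemma \ref{lem:Leibniz}) together with the usual Kato--Ponce product estimate, the dangerous contributions take the form $\|J^s u\|_{L^2_{xy}}\|\partial_x u\|_{L^\infty_{xy}}$ and $\|u\|_{L^\infty_{xy}}\|J^s \partial_x u\|_{L^2_{xy}}$, plus lower-order terms. The first type is paired, after integrating in time, as $\|u\|_{L^\infty_T H^s_{xy}}$ (controlled by $\|u\|_{X_T}$) times $\|\partial_x u\|_{L^2_T L^\infty_{xy}}$ (the fourth term in the $X_T$ norm). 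The main care is needed for the term carrying $s+1$ derivatives in $x$, namely $J^s\partial_x u$: since $s\in(2,3)$, this has order up to $s+1 < 4$, and I would split it using the identity $J^s\partial_x u \sim J^{s-1}\partial_x^2 u$ together with the $x$-derivative norms $\|u_{xx}\|_{L^2_T L^\infty_{xy}}$, $\|D^{s-1}_x u\|_{L^2_T L^\infty_{xy}}$ and the mixed terms $\|D^{s-2}_{y_j}u_x\|_{L^2_T L^\infty_{xy}}$, $\|u_{xy_j}\|_{L^2_T L^\infty_{xy}}$ already built into $\|u\|_{X_T}$, pairing each against the relevant $L^\infty_T L^2$ or $L^\infty_T H^s$ factor.

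\medskip

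The main obstacle will be organizing the Leibniz expansion so that every resulting term is genuinely controlled by a product of two quantities appearing in $\|u\|_{X_T}$, and in particular verifying that the top-order piece $\|J^s(u_x)\|$-type contribution, which formally needs $s+1$ derivatives, can be absorbed. The point is that the extra derivative always falls on $\partial_x$, so it pairs with the $x$-smoothing norms $\|u_x\|_{L^2_T L^\infty_{xy}}$, $\|u_{xx}\|_{L^2_T L^\infty_{xy}}$ and $\|D^{s-1}_x u\|_{L^2_T L^\infty_{xy}}$ rather than requiring regularity beyond $H^s$; this is precisely why the $X_T$ norm includes these $L^2_T L^\infty_{xy}$ ingredients. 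One subtlety is that $H^s = H^s_{xy}$ mixes $x$ and $y$ frequencies, so after reducing to $J^s_x$ and $J^s_{y_j}$ pieces via $J^s \lesssim J^s_x + \sum_j J^s_{y_j}$ (up to lower order), I must ensure the $y$-derivative terms land on the mixed-derivative norms $\|D^{s-2}_{y_j}u_x\|_{L^2_T L^\infty_{xy}}$; here the derivative in $\partial_x u$ supplies the single $x$-derivative needed, matching the structure of those $X_T$ terms exactly.

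\medskip

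Once all terms are bounded by $\|u\|_{X_T}^2$ in the $L^2_T$-in-time norm, restoring the $T^{1/2}$ from the Cauchy--Schwarz step yields \eqref{lwplem0}. I would not expect any genuinely new estimate to be required beyond the product rules of Lemmas \ref{lem:Leibniz} and the interpolation Lemma \ref{lem:interpolaion}, with the accounting being the only delicate part.
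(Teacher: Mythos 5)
Your overall skeleton (Minkowski plus unitarity of $W$, Cauchy--Schwarz in $t'$ for the $T^{1/2}$, then a fractional Leibniz expansion of $D^s(uu_x)$) is exactly the paper's, but your treatment of the top-order term is where the argument breaks. In the product $u\,D^s\partial_x u$ (equivalently $u\,D^s_x u_x$ or $u\,D^s_{y_j}u_x$), one factor carries $s+1$ derivatives while $u$ only lies in $H^s$. You propose to absorb this by ``pairing with the $x$-smoothing norms $\|u_x\|_{L^2_TL^\infty_{xy}}$, $\|u_{xx}\|_{L^2_TL^\infty_{xy}}$, $\|D^{s-1}_xu\|_{L^2_TL^\infty_{xy}}$.'' Those are Strichartz-type norms: by Corollary \ref{linear estimate: strichartz cor} they require strictly \emph{more} than $\gamma$ derivatives in $H^\beta$ to place $D^\gamma_x u$ in $L^2_TL^\infty_{xy}$, and the versions built into $X_T$ hold at most order $2$ or $s-1$ derivatives --- nowhere near $s+1$. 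Putting $D^s\partial_x u$ in $L^2_{xy}$ against $u\in L^\infty_{xy}$ fails for the same reason. So every pairing you list leaves a two-derivative deficit on the worst term.

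The missing idea is the Kato local smoothing / maximal function duality, which is precisely why the $X_T$ norm contains $\|D_x^su_x\|_{L^{\infty}_xL^{2}_{yT}}$, $\|D_{y_j}^su_x\|_{L^{\infty}_xL^{2}_{yT}}$ and $\|u\|_{L^{2}_xL^{\infty}_{yT}}$, $\|D^{s-2}_{y_j}u\|_{L^{2}_xL^{\infty}_{yT}}$, etc. The smoothing estimate \eqref{linear estimate:Kato-1} gains one full derivative, so $\|D^s_{y_j}u_x\|_{L^\infty_xL^2_{yT}}$ is controlled with only $H^s$ data, and the paper estimates the dangerous term by H\"older in the form
\begin{equation}
\int_0^T\big\|u\,D^{s}_{y_1}u_x\big\|_{L^2_{xy}}\,dt\;\lesssim\;T^{1/2}\,\big\|D^{s}_{y_1}u_x\big\|_{L^{\infty}_xL^2_{yT}}\,\big\|u\big\|_{L^{2}_xL^{\infty}_{yT}},\nonumber
\end{equation}
i.e.\ $L^\infty_xL^2_{yT}\times L^2_xL^\infty_{yT}\to L^2_{xyT}$, and similarly for $u_{xy_1y_1}\,D^{s-2}_{y_1}u$. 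Your proposal never invokes either family of mixed norms, so as written it does not close; once you replace your pairing for the top-order term by this smoothing/maximal pairing, the rest of your accounting (the $J^su\cdot\partial_xu$ term against $\|u\|_{L^\infty_TH^s}\|u_x\|_{L^2_TL^\infty_{xy}}$, and the intermediate terms against $\|D^{s-2}_{y_j}u_x\|_{L^2_TL^\infty_{xy}}$, $\|u_{xy_j}\|_{L^2_TL^\infty_{xy}}$) matches the paper.
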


\begin{proof}
	We only estimate  $\|D^s_{y_1}z_1\|_{L^\infty_T L^2_{xy}}$, as other terms can be handled in a similar way.
	
By Leibniz's rule for fractional derivatives, it follows that
\begin{align}
&\|D^s_{y_1}z_1(t)\|_{L^\infty_T L^2_{xy}}
\leq \int^T_0  \|D^s_{y_1}(uu_x)\|_{L^2_{xy}} \mathrm{d}t \notag \\
\lesssim \ & \int^T_0  \left(\|D^{s-2}_{y_1}(u_{y_1y_1}u_x)\|_{L^2_{xy}}+\|D^{s-2}_{y_1}(u_{y_1}u_{xy_1})\|_{L^2_{xy}}+\|D^{s-2}_{y_1}(u_{xy_1y_1}u)\|_{L^2_{xy}} \right)\mathrm{d}t \notag \\
\lesssim \ &  T^{1/2}\Big(\|D^{s}_{y_1}u\|_{L_T^{\infty}L^2_{xy}}\|u_x\|_{L_T^{2}L^{\infty}_{xy}}+\|u_{y_1y_1}\|_{L_T^{\infty}L^2_{xy}}\|D^{s-2}_{y_1}u_x\|_{L_T^{2}L^{\infty}_{xy}}\notag \\
&  +  \|D^{s-1}_{y_1}u_x\|_{L_x^{\infty}L^2_{yT}}\|u_{y_1}\|_{L_x^{2}L^{\infty}_{yT}}+\|D^{s-1}_{y_1}u\|_{L_T^{\infty}L^2_{xy}}\|u_{xy_1}\|_{L_T^{2}L^{\infty}_{xy}}\notag \\
&  + \|D^{s}_{y_1}u_x\|_{L_x^{\infty}L^2_{yT}}\|u\|_{L_x^{2}L^{\infty}_{yT}}+\|u_{xy_1y_1}\|_{L_x^{\infty}L^2_{yT}}\|D^{s-2}_{y_1}u\|_{L_x^{2}L^{\infty}_{yT}}\Big)\notag \\
\lesssim&  T^{1/2}\Big(\|u\|_{L_T^{\infty}H^s_{xy}}\|u_x\|_{L_T^{2}L^{\infty}_{xy}}+\|u\|_{L_T^{\infty}H^s_{xy}}\|D^{s-2}_{y_1}u_x\|_{L_T^{2}L^{\infty}_{xy}}\notag \\
&  +  \|u\|_{L_T^{\infty}H^s_{xy}}\|u_{y_1}\|_{L_x^{2}L^{\infty}_{yT}}+\|u\|_{L_T^{\infty}H^s_{xy}}\|u_{xy_1}\|_{L_T^{2}L^{\infty}_{xy}}\notag \\
&  + \|D^{s}_{y_1}u_x\|_{L_x^{\infty}L^2_{yT}}\|u\|_{L_x^{2}L^{\infty}_{yT}}+\|D^{2}_{y_1}u_{x}\|_{L_x^{\infty}L^2_{yT}}\|D^{s-2}_{y_1}u\|_{L_x^{2}L^{\infty}_{yT}}\Big)\notag \\
\lesssim \ &  T^{1/2}\|u\|_{X_T}^2 \nonumber
	\end{align}
which completes the proof of \eqref{lwplem0}. 
\end{proof}

Now we concentrate on the local well-posedness for the 3D ZK equation.

\textbf{Proof of Theorem \ref{lwp}. } The original equation \eqref{gZK} can be rewritten as an integral equation
$$u= \chi_T(t)W(t)u_0-\chi_T(t)\int^t_0W(t-t')(u\partial_xu)(t')\mathrm{d}t':=\mathscr{T} u ,$$
where $\chi_T$ is a smooth cut-off function such that $\chi_T$ is nonnegative and $\chi_T=1$ on $[0,T]$.

\vspace{1mm}

Next, we estimate  respectively $\mathscr{T} u$ in every norm involved in $X_T$.

\vspace{1mm}
\noindent {\bf{(i)  Estimate for $ \|\mathscr{T} u\|_{H^s_{xy}}$.}}

It follows from  Lemma \ref{lwplem} that
\begin{align}
	\|\mathscr{T} u\|_{L^{\infty}_{T}H^s_{xy}}\leq  \|u_0\|_{H^s(\mathbb{R}^3)}+\|z(t)\|_{L^{\infty}_{T}H^s_{xy}}\lesssim  \|u_0\|_{H^s(\mathbb{R}^3)}+T^{1/2}\|u\|^2_{X^T}.\label{Hs0}
\end{align}

\noindent {\bf{(ii)  Estimate for $\||x|^{r_1}\mathscr{T} u\|_{L^2_{x y}}$.}}

Applying  Minkovski's inequality, Lemma \ref{lem:weightZK} and Lemma \ref{lwplem}, we have
\begin{align}
	&\quad \big\||x|^{r_1}\mathscr{T} u\big\|_{L^2_{xy}}
	\lesssim  \big\||x|^{r_1}W(t)u_0\big\|_{L^2_{xy}}+\int_0^T\big\||x|^{r_1}W(t-t')(uu_x)\big\|_{L^2_{xy}}\mathrm{d}t'\notag\\
	&\lesssim  \big\||x|^{r_1}u_0\big\|_{L^2_{xy}}+(1+T)\|u_0\|_{H^s}+\int_0^T\big\||x|^{r_1}uu_x\big\|_{L^2_{xy}}\mathrm{d}t'+(1+T)\int_0^T\|uu_x\|_{H^s}\mathrm{d}t' ,\notag\\
&\lesssim  \big\||x|^{r_1}u_0\big\|_{L^2_{xy}}+(1+T)\|u_0\|_{H^s}+T^{\frac{1}{2}}\big\||x|^{r_1}u\big\|_{L_T^{\infty}L^2_{xy}}\|u_x\|_{L_T^{2}L^{\infty}_{xy}}+(1+T)T^{\frac{1}{2}}\|u\|_{X_T}^2 ,\notag\\
&\lesssim (1+T)\|u_0\|_{Z_{s, (r_1, r_2)}}+(1+T)T^{\frac{1}{2}}\|u\|_{X_T}^2. \label{xHs1}
\end{align}

\noindent {\bf{(iii)  Estimate for $\|D^{s-2}_{y_j}\partial_x\mathscr{T} u\|_{L_T^{2}L^{\infty}_{xy}}$.}}

\vspace{1mm}
We take $\|D^{s-2}_{y_j}\partial_x\mathscr{T} u\|_{L_T^{2}L^{\infty}_{xy}}$ for instance, as other terms 
$$\|(\mathscr{T}u)_x\|_{L^{2}_{T}L^{\infty}_{xy}},\hspace{2mm}\|(\mathscr{T}u)_{xx}\|_{L^{2}_{T}L^{\infty}_{xy}}
,\hspace{2mm}\|D^{s-1}_{x}\mathscr{T}\|_{L_T^{2}L^{\infty}_{xy}},\hspace{2mm}\sum_{j=1}^{2}\|(\mathscr{T}u)_{xy_j}\|_{L^{2}_{T}L^{\infty}_{xy}}
$$
can be controlled in a similar way.

From Corollary \ref{linear estimate: strichartz cor} and Lemma \ref{lwplem}, we obtain 
\begin{align}
		&\quad \|D^{s-2}_{y_j}\partial_x\mathscr{T} u\|_{L_T^{2}L^{\infty}_{xy}} \notag\\
		&\lesssim_{T} \|u_0\|_{H^{s}(\mathbb{R}^3)}+\int_0^T\|uu_x\|_{H^{s}(\mathbb{R}^3)}\mathrm{d}t' \notag\\
		&\lesssim _{T}\|u_0\|_{H^s(\mathbb{R}^3)}+T^{1/2}\| u\|^2_{X_T}.\label{xHs2}
\end{align}

\noindent {\bf{(iv)  Estimate for $\|D^{s-2}_{y_j}\mathscr{T} u\|_{L^{2}_xL^{\infty}_{yT}}$.}}

Using Minkovski's inequality, the maximal function estimate  \eqref{linear estimate:maximal} and Lemma \ref{lwplem}, we see that
\begin{align}
	\|D^{s-2}_{y_j}\mathscr{T} u\|_{L^{2}_xL^{\infty}_{yT}}&\lesssim   \|u_0\|_{H^{s}(\mathbb{R}^3)}+\int_0^T\|uu_x\|_{H^{s}(\mathbb{R}^3)}\mathrm{d}t\notag\\
	&\lesssim \|u_0\|_{H^s(\mathbb{R}^3)}+T^{1/2}\|u\|_{X_T}^2.\label{xHs3}
\end{align}
Other terms 
$$\|\mathscr{T}u\|_{L^{2}_xL^{\infty}_{yT}},\hspace{2mm}\|(\mathscr{T}u)_x\|_{L^{2}_xL^{\infty}_{yT}},\hspace{2mm}\sum_{j=1}^{2}\|(\mathscr{T}u)_{y_j}\|_{L^{2}_xL^{\infty}_{yT}},\hspace{2mm}\|D^{s-2}_{x}\mathscr{T}u\|_{L^{2}_xL^{\infty}_{yT}}$$
can be controlled in the same manner.

\noindent {\bf{(v)  Estimate for $\|D_{y_j}^s(\mathscr{T} u)_x\|_{L^{\infty}_xL^{2}_{yT}}$.}}

Kato smoothing effect \eqref{linear estimate:Kato-1} and Lemma \ref{lwplem} yield that
\begin{align}
	\|D_{y_j}^s(\mathscr{T} u)_x\|_{L^{\infty}_{x}L^2_{yT}}&\lesssim  \|u_0\|_{H^s(\mathbb{R}^3)}+\int_0^T\|D_{y_j}^s(uu_x)\|_{L^{2}_{xy}}\mathrm{d}t \notag\\
	&\lesssim \|u_0\|_{H^s(\mathbb{R}^3)}+T^{1/2}\|u\|_{X_T}^2.\label{xHs4}
\end{align}
One can similarly control
$$ \|D_x^s(\mathscr{T} u)_x\|_{L^{\infty}_xL^{2}_{yT}},
		\hspace{2mm}\|(\mathscr{T} u)_{xxx}\|_{L^{\infty}_xL^{2}_{yT}},\hspace{2mm}
\|D_{y_j}^2(\mathscr{T} u)_x\|_{L^{\infty}_xL^{2}_{yT}}.$$

Collecting estimates \eqref{Hs0}-\eqref{xHs4}, we obtain
\begin{align}
	\|\mathscr{T} u\|_{X_T}&< C_1(1+T)^{\alpha} \left(\|u_0\|_{Z_{s,(r_1, r_2)}}+T^{1/2}\|u\|_{X_T}^2\right).
\end{align}
where positive constants $C_1$ and $\alpha$. 

Taking $r=4C_1\|u_0\|_{Z_{s,(r_1, r_2)}}$, it is easy to see that  $\mathscr{T}$ is a contraction mapping on 
$$ B_r=\left\{u\in X_T\ \big| \hspace{2mm} \|u\|_{X_T} <r\right\}$$  for $T=\min\left\{1,  (4C_1r)^{-2}\right\}$. Thus, there exists a unique solution $u$ to \eqref{gZK} satisfying
\begin{align}
	\| u\|_{X_T} \leqslant 4C_1\|u_0\|_{Z_{s,(r_1, r_2)}}.\nonumber
\end{align}
This completes the proof of Theorem \ref{lwp}.\hspace{77mm}$\square$

\section{Linear singularities}\label{Linearsingu}

We study dispersive singularities for solution  to 3D ZK in this section. It will be shown that  the solution to 3D ZK displays dispersive blow up for some smooth initial data and the blow up appears due to the linear component of the solution.

Specifically, we will construct an initial data  $u_0\in C^\infty(\mathbb{R}^3)\cap Z_{s,(r_1,r_2)}(\mathbb{R}^3) $ for $s=5/2-$ such that the free solution part $W(t)u_0$ fails to be in $C^1(\mathbb{R}^3)$ at all positive rational time  $t\in \mathbb{Q}^+$. However, the Duhamel term $z_1(t)=\int^t_0 W(t-t')(u\partial_x u)(t')\mathrm{d}t'$ holds good regularity at any time $t\in \mathbb{R}$, as to be showed in the next section that $z_1(t)$ lies in $H^{\frac{5}{2}+}(\mathbb{R}^3)$ and therefore embedded in $C^1	(\mathbb{R}^3)$.

 Denote 
$$\varphi(x, y_1, y_2)=e^{-2\sqrt{x^2+y_1^2+y_2^2}}$$
which is in $Z_{s,(r_1,r_2)}(\mathbb{R}^3)$ for any $s\in\left[2, 5/2\right)$. Besides, it is easy to verify that 
$$e^{x+y_1+y_2}\varphi \in L^2(\mathbb{R}^3) \hspace{2mm} \text{and} \hspace{2mm} \varphi \in C^{\infty}(\mathbb{R}^3\backslash(0,0,0))\backslash C^1(\mathbb{R}^3).$$
Considering that the regularity of $\varphi $ to be $C^1$ is broken at the origin, we will construct the initial value $u_0$ based on $\varphi$ such that $W(t)u_0$ will display dispersive singularities at positive rational time.

\begin{lemma}  \label{linbuplema}
Let $\varphi=e^{-2\sqrt{x^2+y_1^2+y_2^2}}$. Then,  
$W(t)\varphi\in C^{\infty}(\mathbb{R}^3) $ for $t\neq0$.
Moreover, for $t>0$ we have
\begin{align}
\left\|\partial^{\alpha}_{x,y} e^{x+y_1+y_2}W(t)\varphi\right\|_{L^2_{xy}}
		\lesssim   t^{-\frac{|\alpha|}{2}} e^{3t} \left\|e^{x+y_1+y_2}\varphi\right\|_{L^2}  
		<  \infty, \label{linbuplema1}
	\end{align}
and for $t<0$ we have 
\begin{align}
\left\|\partial^{\alpha}_{x,y} e^{-x-y_1-y_2}W(t)\varphi\right\|_{L^2_{xy}}
		\lesssim   |t|^{-\frac{|\alpha|}{2}} e^{-3t} \left\|e^{-x-y_1-y_2}\varphi\right\|_{L^2}  
		<  \infty,  \label{linbuplema2}
	\end{align}
where $\alpha=(\alpha_0, \alpha_1,\alpha_2)$, $\partial^{\alpha}_{x,y}=\partial^{\alpha_0}_{x}\partial^{\alpha_1}_{y_1}\partial^{\alpha_2}_{y_2}$ and  $|\alpha|=\alpha_0+\alpha_1+\alpha_2$.
\end{lemma}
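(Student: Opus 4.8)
The plan is to prove both assertions at once by a Paley--Wiener (contour--shift) argument in the frequency variables, exploiting that $\varphi$ decays like $e^{-2r}$, $r=\sqrt{x^2+y_1^2+y_2^2}$, and that $x+y_1+y_2\le\sqrt3\,r<2r$ for $r>0$, so that $e^{x+y_1+y_2}\varphi$ (and likewise $e^{-x-y_1-y_2}\varphi$) lies in $L^2$. This exponential decay guarantees that $\widehat{\varphi}$ extends holomorphically to the tube $\{\zeta=\xi+i\mu:\ \mu\in\mathbb R^3,\ |\mu|<2\}$, and since the shift vector $(1,1,1)$ has length $\sqrt3<2$, the shifted function $\widehat{\varphi}(\,\cdot+i\mathbf 1\,)$ with $\mathbf 1=(1,1,1)$ is well defined and, by Plancherel together with a contour deformation, satisfies $\|\widehat{\varphi}(\,\cdot+i\mathbf 1\,)\|_{L^2}=\|e^{x+y_1+y_2}\varphi\|_{L^2}$. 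Recalling $\widehat{W(t)\varphi}=e^{it\omega}\widehat\varphi$ with $\omega(\xi,\eta)=\xi(\xi^2+\eta_1^2+\eta_2^2)$, multiplication by $e^{x+y_1+y_2}$ becomes the contour shift $\xi\mapsto\xi+i$, $\eta_j\mapsto\eta_j+i$, so for $t>0$ I would write
\[
e^{x+y_1+y_2}\partial^{\alpha}_{x,y}W(t)\varphi=\mathscr F^{-1}\Big[(i(\xi+i))^{\alpha_0}(i(\eta_1+i))^{\alpha_1}(i(\eta_2+i))^{\alpha_2}\,e^{it\omega(\xi+i,\eta_1+i,\eta_2+i)}\,\widehat\varphi(\xi+i,\eta_1+i,\eta_2+i)\Big],
\]
and estimate its $L^2$ norm by Plancherel.

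The crucial computation is the imaginary part of $\omega$ on the shifted contour. A direct expansion gives
\[
\operatorname{Im}\omega(\xi+i,\eta_1+i,\eta_2+i)=3\xi^2+\eta_1^2+\eta_2^2+2\xi\eta_1+2\xi\eta_2-3=Q(\xi,\eta)-3,\qquad Q:=\xi^2+(\xi+\eta_1)^2+(\xi+\eta_2)^2.
\]
Since $Q$ is a sum of squares vanishing only at the origin, it is positive definite, so $Q\gtrsim\xi^2+\eta_1^2+\eta_2^2$. Consequently $|e^{it\omega(\xi+i,\ldots)}|=e^{-t(Q-3)}=e^{3t}e^{-tQ}$ for $t>0$: the constant $-3$ produces exactly the prefactor $e^{3t}$, while the positive-definite part yields genuine Gaussian decay $e^{-tQ}$ in every frequency direction. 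This last point is the heart of the matter — it is what forces the dispersive smoothing and is unavailable if the quadratic form were merely semidefinite.

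To finish \eqref{linbuplema1} I would pull the symbol out in sup-norm: using $|\xi+i|^{\alpha_0}|\eta_1+i|^{\alpha_1}|\eta_2+i|^{\alpha_2}\lesssim(1+|\xi|)^{\alpha_0}(1+|\eta_1|)^{\alpha_1}(1+|\eta_2|)^{\alpha_2}\lesssim 1+(\xi^2+\eta_1^2+\eta_2^2)^{|\alpha|/2}$ together with the positive definiteness of $Q$, the elementary scaling bound $\sup_{R\ge0}R^{|\alpha|}e^{-ctR^2}\lesssim t^{-|\alpha|/2}$ (as $t\to0^+$) gives $\sup_{\xi,\eta}|\xi+i|^{\alpha_0}|\eta_1+i|^{\alpha_1}|\eta_2+i|^{\alpha_2}e^{-tQ}\lesssim t^{-|\alpha|/2}$. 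Factoring this out of the $L^2$ integral leaves precisely $\|\widehat\varphi(\,\cdot+i\mathbf 1\,)\|_{L^2}=\|e^{x+y_1+y_2}\varphi\|_{L^2}$, which yields \eqref{linbuplema1}. For $t<0$ the same computation with the shift $\xi\mapsto\xi-i$, $\eta_j\mapsto\eta_j-i$ (which corresponds to the weight $e^{-x-y_1-y_2}$) gives $\operatorname{Im}\omega(\xi-i,\ldots)=3-Q$, hence $|e^{it\omega}|=e^{-3t}e^{tQ}=e^{-3t}e^{-|t|Q}$ for $t<0$, and the identical estimate produces \eqref{linbuplema2}. Smoothness then follows for free: for fixed $t\neq0$ the weight $e^{\pm(x+y_1+y_2)}$ is bounded below by a positive constant on every ball, so the weighted bounds give $\partial^{\alpha}_{x,y}W(t)\varphi\in L^2_{\mathrm{loc}}(\mathbb R^3)$ for all multi-indices $\alpha$; thus $W(t)\varphi\in H^k_{\mathrm{loc}}$ for every $k$, whence $W(t)\varphi\in C^\infty(\mathbb R^3)$ by Sobolev embedding.

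The main obstacle is the rigorous justification of the contour-shift identity — the holomorphic continuation of $\widehat\varphi$ to the tube and the $L^2$-validity of the Fourier-multiplier representation on the shifted contour (a Paley--Wiener/Fourier--Laplace argument). Everything else is either the clean algebraic identity $\operatorname{Im}\omega(\xi\pm i,\eta_j\pm i)=\pm(Q-3)$, whose positive-definiteness drives the smoothing, or routine scaling estimates; I would only need to be careful that the admissibility of the shift (the inequality $\sqrt3<2$) is respected throughout, and that the $t^{-|\alpha|/2}$ factor is understood as governing the relevant behavior as $t\to0^+$.
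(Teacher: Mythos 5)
Your proposal is correct and is essentially the paper's own argument in a different dress: the paper conjugates the equation by the weight, setting $w=e^{x+y_1+y_2}W(t)\varphi$ and solving the resulting PDE explicitly in Fourier, which produces exactly your multiplier $e^{3t}e^{-t(\xi^2+(\eta_1+\xi)^2+(\eta_2+\xi)^2)}$, followed by the same Plancherel/sup-norm scaling bound $\sup R^{|\alpha|}e^{-tR^2}\lesssim t^{-|\alpha|/2}$ and the same local Sobolev embedding for smoothness. Your contour-shift phrasing and the paper's conjugated-equation phrasing are two presentations of the identical computation, so no further comparison is needed.
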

\begin{proof} In fact, from Sobolev embedding theorem it follows that 
\begin{align}
W(t)\varphi\in C^{\infty}(\mathbb{R}^3) 
\Longleftrightarrow& \hspace{1mm} e^{x+y_1+y_2}W(t)\varphi\in C^{\infty}(\mathbb{R}^3)	\notag \\
\Longleftrightarrow&  \hspace{1mm} e^{x+y_1+y_2}W(t)\varphi\in H^{m}(\mathbb{R}^3) \hspace{3mm} \text{for all}  \hspace{2mm} m\in \mathbb{N}.	\nonumber	
\end{align}
Hence, we only need to show \eqref{linbuplema1}.

 Set $v(t)=W(t)\varphi$. Then $v(t)$ is the solution to the following linear equation
\begin{align} 
	\partial_t v+\partial_x\Delta v=0 \label{linearZK}
\end{align}
with initial data $v|_{t=0}=\varphi$. 

Let us denote  $w(t)=e^{x+y_1+y_2}v(t)$. Putting $v(t)=e^{-x-y_1-y_2}w(t)$  into  \eqref{linearZK}
gives the equation that $w(t)$ satisfies
\begin{equation*}
	\partial_t w+ \partial_x\Delta w-\Delta w-2\partial_x(\partial_x+\partial_{u_1}+\partial_{y_2})w +5\partial_xw +2\partial_{y_1}w +2\partial_{y_2}w-3w=0
\end{equation*}
with initial data $w|_{t=0}=w_0=e^{x+y_1+y_2}\varphi$.
Hence,
\begin{equation*}
	\widehat{w}=e^{t\big(i\xi(\xi^2+\eta^2)-(\xi^2+\eta^2)-2\xi(\xi+\eta_1+\eta_2)-i(5\xi_1+2\xi_2+2\xi_3)+3\big)}\widehat{w_0}.
\end{equation*}
By Plancheral's identity, one gets
\begin{equation*}
	\begin{aligned}
		&\left\|\partial^{\alpha}_{x,y} w\right\|_{L^2}
		= \big\||\xi|^{\alpha_0}|\eta_1|^{\alpha_1}|\eta_2|^{\alpha_2}\widehat{w}\big\|_{L^2}\\
		= & \left\||\xi|^{\alpha_0}|\eta_1|^{\alpha_1}|\eta_2|^{\alpha_2}e^{-t\big(\xi^2+(\eta_1+\xi)^2+(\eta_2+\xi)^2\big)}e^{3t}\widehat{w_0}\right\|_{L^2}\\
		\lesssim &e^{3t}\big\||\xi|^{\alpha_0}|\eta_1|^{\alpha_1}|\eta_2|^{\alpha_2}e^{-t(\xi^2+(\eta_1+\xi)^2+(\eta_2+\xi)^2)}\big\|_{L^{\infty}}
	\left\|e^{x+y_1+y_2}\varphi\right\|_{L^2}\\
		\lesssim  & |t|^{-\frac{|\alpha|}{2}} 	e^{3t}\left\|e^{x+y_1+y_2}\varphi\right\|_{L^2}
		< \infty.
	\end{aligned} 	  	
\end{equation*}

We finish the proof of the lemma.
\end{proof}
Next we constructe a smooth initial data $u_0$ such that it inherits the regularity of $W(t)\varphi$ for $t\neq0$, but as time $t$ translates forward
 to $\mathbb{Q}^+$ the linear solution to \eqref{gZK} with  initial data $u_0$ will display singularity of $\varphi$ at origin. 
\begin{theorem}  \label{linbupthm}
Assume that
\begin{align}
	u_0=\sum_{k\in\mathbb{Z}^+} \sum_{\substack{j\in\mathbb{Z}^+,\\ gcd(j,k)=1}}e^{-e^{k}}e^{-j^2} W\big(-\frac{j}{k}\big)\varphi  \label{initval000}
\end{align}
where $\varphi=e^{-2\sqrt{x^2+y_1^2+y_2^2}}$ , then  we have 
\begin{equation*}
	\left\{\begin{array}{ll}
		W(t)u_0\in C^{\infty}	(\mathbb{R}^3),    &t>0, \ t\in \mathbb{R}\setminus X,\\
		W(t)u_0\in C^{\infty}	(\mathbb{R}^3\backslash(0,0,0))\backslash C^1(\mathbb{R}^3),  \ \  &t>0, \ t\in \mathbb{Q}\subset X.
	\end{array}\right.	
\end{equation*}
\end{theorem}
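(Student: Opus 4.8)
The plan is to exploit the explicit summation structure of $u_{0}$ in \eqref{initval000}, which is designed so that applying $W(t)$ redistributes the singular building blocks $W(-j/k)\varphi$ to the time-shifted profiles $W(t-j/k)\varphi$. Since $W$ is a one-parameter group, linearity gives
\begin{equation*}
W(t)u_{0}=\sum_{k\in\mathbb{Z}^{+}}\sum_{\substack{j\in\mathbb{Z}^{+},\\ \gcd(j,k)=1}}e^{-e^{k}}e^{-j^{2}}\,W\!\left(t-\tfrac{j}{k}\right)\varphi.
\end{equation*}
The double-exponential weight $e^{-e^{k}}$ together with the Gaussian weight $e^{-j^{2}}$ makes the series converge extremely fast, so the first task is to justify convergence in $C^{\infty}$ (equivalently in every $H^{m}$) of all terms \emph{away} from the resonant times. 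The key smoothing input is Lemma \ref{linbuplema}: each summand $W(t-j/k)\varphi$ is smooth as long as $t\neq j/k$, and the quantitative bound \eqref{linbuplema1}--\eqref{linbuplema2} controls $\|\partial^{\alpha}_{x,y}W(t-j/k)\varphi\|$ by a factor $|t-j/k|^{-|\alpha|/2}e^{3|t-j/k|}$ (up to the fixed constant $\|e^{\pm(x+y_{1}+y_{2})}\varphi\|_{L^{2}}$). The weights must be shown to beat both this inverse-power singularity and the exponential growth, uniformly for $(x,y)$ in compact sets.

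First I would fix an irrational $t\in\mathbb{R}\setminus X$ and a multi-index $\alpha$, and split the double sum according to whether the denominator $k$ is small or large. For each term one estimates the distance $|t-j/k|$ from below using the genericity hypothesis \eqref{generic0}: writing $t-j/k=(kt-j)/k$ and applying the Diophantine bound $|kt-j|\gtrsim 1/(k\ln k)$ (the two-integer form with $k_{1}=-j$, $k_{2}=k$), one gets $|t-j/k|^{-|\alpha|/2}\lesssim (k^{2}\ln k)^{|\alpha|/2}$. Summing over the allowed $j$ (for which the Gaussian $e^{-j^{2}}$ forces rapid decay once $j\gtrsim k|t|+1$) and then over $k$, the polynomial-in-$k$ blow-up is annihilated by $e^{-e^{k}}$, so the $H^{m}$ norm of the tail is finite for every $m$. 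This yields $W(t)u_{0}\in\bigcap_{m}H^{m}(\mathbb{R}^{3})=C^{\infty}(\mathbb{R}^{3})$ for generic $t$, proving the first line. Care is needed to choose the sign of the exponential weight ($e^{+(x+y_{1}+y_{2})}$ versus $e^{-(x+y_{1}+y_{2})}$) according to the sign of $t-j/k$, so that the correct estimate \eqref{linbuplema1} or \eqref{linbuplema2} applies; since both weighted norms of $\varphi$ are finite and one then undoes the weight on compact sets, the smoothness conclusion is local and unaffected by the sign bookkeeping.

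For the second line I would fix a positive rational $t=p/q$ in lowest terms and isolate the single resonant summand, namely the term with $j/k=p/q$, i.e. $j=p$, $k=q$. That term equals $e^{-e^{q}}e^{-p^{2}}W(0)\varphi=e^{-e^{q}}e^{-p^{2}}\varphi$, which carries exactly the Lipschitz-but-not-$C^{1}$ singularity of $\varphi$ at the origin. All remaining summands have $j/k\neq p/q$, hence $t-j/k\neq0$, and the same Diophantine-type lower bound (now for the rational $t=p/q$, where $|qt-j\cdot q/k|$ stays away from zero because $j/k\neq p/q$ with bounded-height competitors dominated by $e^{-e^{k}}$) shows the complementary series is smooth near the origin by the argument above. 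Therefore $W(t)u_{0}$ inherits the singularity of $\varphi$ at $(0,0,0)$ and is $C^{\infty}$ elsewhere, giving $W(t)u_{0}\in C^{\infty}(\mathbb{R}^{3}\setminus(0,0,0))\setminus C^{1}(\mathbb{R}^{3})$.

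The main obstacle is the uniform control of the nonresonant tail: one must verify that the combined weight $e^{-e^{k}}e^{-j^{2}}$ dominates the product of the inverse-distance factor $|t-j/k|^{-|\alpha|/2}$ and the exponential factor $e^{3|t-j/k|}$ \emph{simultaneously for all multi-indices} $\alpha$, uniformly on compact spatial sets, and that near a resonant rational time the \emph{other} rationals $j/k$ do not accumulate too fast. This is exactly where \eqref{generic0} is indispensable for irrational $t$ (to forbid near-resonances), while for rational $t$ the separation is automatic but the $C^{0}$/$C^{1}$ regularity extraction at the origin must be done carefully so that the smooth tail does not cancel the isolated singular term — which it cannot, since adding a $C^{\infty}$ function to a non-$C^{1}$ function stays non-$C^{1}$.
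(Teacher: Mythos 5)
Your proposal is correct and follows essentially the same route as the paper: expand $W(t)u_0$ by the group property, control the nonresonant tail via the weighted bounds \eqref{linbuplema1}--\eqref{linbuplema2} of Lemma \ref{linbuplema} (splitting according to the sign of $t-j/k$ to pick the weight $e^{\pm(x+y_1+y_2)}$, exactly the paper's $W_1/W_2$ decomposition), use \eqref{generic0} to get the polynomial lower bound on $|t-j/k|$ which $e^{-e^k}e^{-j^2}$ then dominates, and at rational $t=p/q$ isolate the unique resonant summand $e^{-e^q}e^{-p^2}\varphi$ whose non-$C^1$ singularity at the origin survives addition of the smooth remainder. The only cosmetic imprecision is that \eqref{generic0} gives $|kt-j|\gtrsim 1/((j+k)\ln(j+k))$ rather than $1/(k\ln k)$, but the resulting polynomial growth in $j$ is still absorbed by $e^{-j^2}$, so the argument is unaffected.
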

\begin{proof}
It follows from  \eqref{linbuplema2} and \eqref{initval000} that 
	\begin{align}
		&\left\|\partial^{\alpha}_{x,y} e^{-(x+y_1+y_2)}u_0\right\|_{L^2}\notag\\
		\lesssim & \sum_{k\in\mathbb{Z}^+} \sum_{\substack{j\in\mathbb{Z}^+,\\ gcd(j,k)=1}}e^{-e^{k}}e^{-j^2} \left\|\partial^{\alpha}_{x,y} e^{-(x+y_1+y_2)}W\big(-\frac{j}{k}\big)\varphi\right\|_{L^2} \notag\\
		\lesssim & \sum_{k\in\mathbb{Z}^+} \sum_{\substack{j\in\mathbb{Z}^+,\\ gcd(j,k)=1}}e^{-e^{k}}e^{-j^2} j^{-\frac{|\alpha|}{2}}k^{\frac{|\alpha|}{2}} e^{3j/k} \left\|e^{-x-y_1-y_2}\varphi\right\|_{L^2}  <\infty \nonumber
	\end{align}
which deduces that
$u_0\in C^{\infty}	(\mathbb{R}^3)$
by Sobolev embedding theorem.

Note that
\begin{align}
W(t)	u_0&=\sum_{k\in\mathbb{Z}^+} \sum_{\substack{j\in\mathbb{Z}^+,\\ gcd(j,k)=1}}e^{-e^{k}}e^{-j^2} W\big(t-\frac{j}{k}\big)\varphi,\notag\\
&=\sum_{j/k<t}e^{-e^{k}}e^{-j^2} W\big(t-\frac{j}{k}\big)\varphi+\sum_{j/k>t}e^{-e^{k}}e^{-j^2} W\big(t-\frac{j}{k}\big)\varphi,\notag\\
&:=W_1(t)	u_0+W_2(t)	u_0.\nonumber
	\end{align}
For any $t\in \mathbb{R}\setminus X$ and $t>0$ given, from \eqref{generic0} we have
$$|t-j/k|\gtrsim (|j|+|k|)^{-3},$$
so
	\begin{align}
		&\left\|\partial^{\alpha}_{x,y} e^{x+y_1+y_2}W_1(t)	u_0\right\|_{L^2}\notag\\
		\lesssim &\mathop{\sum\sum}_{\substack{k\in\mathbb{Z}^+, \ j\in\mathbb{Z}^+, \\ gcd(j,k)=1, \ j/k<t}}e^{-e^{k}}e^{-j^2} \left\|\partial^{\alpha}_{x,y} e^{x+y_1+y_2}W\big(t-\frac{j}{k}\big)\varphi\right\|_{L^2} \notag\\
		\lesssim&\mathop{\sum\sum}_{\substack{k\in\mathbb{Z}^+, \ j\in\mathbb{Z}^+, \\ gcd(j,k)=1, \ j/k<t}}e^{-e^{k}}e^{-j^2} (t-j/k)^{-\frac{|\alpha|}{2}} e^{3(t-j/k)} \left\|e^{x+y_1+y_2}\varphi\right\|_{L^2}  \notag\\
\lesssim& e^{3t} \left\|e^{x+y_1+y_2}\varphi\right\|_{L^2} \mathop{\sum\sum}_{\substack{k\in\mathbb{Z}^+, \ j\in\mathbb{Z}^+, \\ gcd(j,k)=1, \ j/k<t}}e^{-e^{k}}e^{-j^2} (|j|+|k|)^{\frac{3|\alpha|}{2}}  <\infty \nonumber
	\end{align}
and 
	\begin{align}
		&\left\|\partial^{\alpha}_{x,y} e^{-x-y_1-y_2}W_2(t)	u_0\right\|_{L^2}\notag\\
		\lesssim &\mathop{\sum\sum}_{\substack{k\in\mathbb{Z}^+, \ j\in\mathbb{Z}^+, \\ gcd(j,k)=1, \ j/k>t}}e^{-e^{k}}e^{-j^2} \left\|\partial^{\alpha}_{x,y} e^{-x-y_1-y_2}W\big(t-\frac{j}{k}\big)\varphi\right\|_{L^2} \notag\\
		\lesssim&\mathop{\sum\sum}_{\substack{k\in\mathbb{Z}^+, \ j\in\mathbb{Z}^+, \\ gcd(j,k)=1, \ j/k>t}}e^{-e^{k}}e^{-j^2} (j/k-t)^{-\frac{|\alpha|}{2}} e^{3(j/k-t)} \left\|e^{-x-y_1-y_2}\varphi\right\|_{L^2}  <\infty \nonumber
	\end{align}
by using Lemma  \ref{linbuplema}.
Thus  $W(t)u_0\in C^{\infty}(\mathbb{R}^3)$ for $t\in \mathbb{R}\setminus X$ and $t>0$.

However,  for $t=j_0/k_0\in\mathbb{Q}^+$,
	\begin{align}
	W\big(\frac{j_0}{k_0}\big)u_0=\mathop{\sum\sum}_{(j,k)\neq(j_0,k_0)} e^{-e^{k}}e^{-j^2} W\big(\frac{j_0}{k_0}-\frac{j}{k}\big)\varphi+ e^{-e^{k_0}}e^{-j_0^2} \varphi.	 \label{rationaltime} 	
\end{align}
It is easy to see that the last term of \eqref{rationaltime} fails to be $C^1(\mathbb{R}^3)$ while the other terms are in $C^\infty(\mathbb{R}^3)$. This completes the proof of the theorem.
\end{proof}


\section{Nonlinear smoothing}
This section devotes to show Theorem \ref{ZKnonlinSmooth} and Theorem \ref{gZKDuhamelkg2}.

\subsection{Nonlinear smoothing for $k=1$}
\textbf{Proof of Theorem \ref{ZKnonlinSmooth}. }
We will show that the Duhamel term defined as
 $$ z_1(t)=\int^t_0 W(t-t')(u\partial_x u)(t')\mathrm{d}t'$$ 
belongs to $H^{5/2+}(\mathbb{R}^3)$
for initial value 
\begin{align}
u_0\in \bigcap\limits_{s\in \left[2,\frac52\right)} Z_{s,(r_1,r_2)}\nonumber
\end{align}
where $0<r_1,r_2<1$. This  implies in particular $z_1(t)\in C^1(\mathbb{R}^3)$ by Sobolev embedding theorem. Combining with the conclusion in Section \ref{Linearsingu}, we may deduce that dispersive blow-up inherits from the linear component part of \eqref{gZK} for the initial data constructed in \eqref{initval000}.

Let us start by considering $\big\|D_x^{\frac{5}{2}+}z_1\big\|_{L^2_{xy}}$. Applying the dual version of the smoothing effect \eqref{linear estimate:Kato-2} and Cauchy-Schwarz inequality, we have
\begin{align}
	&\left\|D_x^{\frac{5}{2}+}\int^t_0 W(t-t')(u\partial_x u)(t')\mathrm{d}t'\right\|_{L^2_{xy}}\notag \\
	\lesssim&  \big\|D_x^{\frac{3}{2}+}(u\partial_x u)\big\|_{L^1_xL^2_{yT}}
\lesssim  \big\|\left<x\right>^{\frac{1}{2}+}D_x^{\frac{3}{2}+}(u\partial_x u)\big\|_{L^2_{xyT}}\notag \\
	\lesssim& \big\|\left<x\right>^{\frac{1}{2}+}[D_x^{\frac{3}{2}+},u]\partial_x u\big\|_{L^2_{xyT}}+\big\|\left<x\right>^{\frac{1}{2}+}uD_x^{\frac{3}{2}+}u_x\big\|_{L^2_{xyT}}
:= NL_1+NL_2.	\nonumber
\end{align}

\noindent
$\bullet$ {\bf Estimate for  $NL_1$.}  
\vspace{1mm}

It follows from the weighted Kato-Ponce inequality \eqref{Kato-Ponce-weight1} that
\begin{equation*}
  \begin{aligned}
	&\big\|\langle x\rangle^{\frac{1}{2}+}[D_x^{\frac32+},u]u_x\big\|_{L^2_{x}}\\
	  \lesssim & \big\| \langle x\rangle^{\frac{1}{4}+} D_x^{\frac32+}u\big\|_{L^4_x} \big\|\langle x\rangle^{\frac{1}{4}+} u_x\big\|_{L^4_{x}}
+\big\|\langle x\rangle^{\frac{1}{4}+}D_x^{\frac12+}u_x\big\|_{L^4_x}\big\|\langle x\rangle^{\frac{1}{4}+} u_x\big\|_{L^4_{x}}
   \end{aligned}
\end{equation*}
which yields  by H{\"o}lder's inequality 
  \begin{align}
	NL_1&=\big\|\langle x\rangle^{\frac{1}{2}+}[D_x^{\frac32+},u]u_x\big\|_{L^2_{xyT}}\notag \\
	&\lesssim  \left\| \big\|\langle x\rangle^{\frac{1}{4}+} D_x^{\frac32+}u\big\|_{L^4_x}\big\|\langle x\rangle^{\frac{1}{4}+} u_x\big\|_{L^4_{x}}\right\|_{L^2_{yT}} \notag \\
	&\lesssim \|\langle x\rangle^{\frac{1}{4}+} D_x^{\frac32+}u\|_{L^{p_0}_{T}L^4_{xy}} \|\langle x\rangle^{\frac{1}{4}+} u_x\|_{L^{\frac{2p_0}{p_0-2}}_{T}L^4_{xy}} \label{NL1a}
   \end{align}
where $2<p_0<4$ (to be determined later) such that $(4,p_0)$ is a Strichartz pair.

On one hand, using interpolation inequality  \eqref{lem:interpolaion1}, we have
\begin{equation*}
	\begin{aligned}
		\big\|\langle x\rangle^{\frac{1}{4}+} D_x^{\frac32+}u\big\|_{L^4_{x}}
		\lesssim  \big\|\langle x\rangle^{\frac{1}{4\beta}+} u\big\|_{L^4_{x}}^\beta 
\big\|D_x^{3/2(1-\beta)+}u\big\|_{L^4_{x}}^{1-\beta}
		\lesssim  \big\|\langle x\rangle^{\frac{1}{4\beta}+} u\big\|_{L^4_{x}}+
\big\|D_x^{3/2(1-\beta)+}u\big\|_{L^4_{x}}
	\end{aligned}
\end{equation*}
which gives
	\begin{align}
		\big\|\langle x\rangle^{\frac{1}{4}+} D_x^{\frac32+}u\big\|_{L^{p_0}_{T}L^4_{xy}}
		\lesssim \big\|\langle x\rangle^{\frac{1}{4\beta}+} u\big\|_{L^{p_0}_{T}L^4_{xy}} + \big\|D_x^{3/2(1-\beta)+}u\big\|_{L^{p_0}_{T}L^4_{xy}}.\label{NL1a1aa}
	\end{align}

For $\|\langle x\rangle^{\frac{1}{4\beta}+} u\|_{L^{p_0}_{T}L^4_{xy}}$, using Sobolev embedding theorem and interpolation inequality, 
	\begin{align}
		\big\|\langle x\rangle^{\frac{1}{4\beta}+} u\big\|_{L^4_{xy}}
		&\lesssim \big\|J^{\frac34}\langle (x,y)\rangle^{\frac{1}{4\beta}+} u\big\|_{L^2_{xy}} \notag\\
		&\lesssim \big\|J^{\frac{3}{4(1-\sigma)}} u\big\|_{L^2_{xy}}^{1-\sigma} \big\|\langle (x,y)\rangle^{\frac{1}{4\beta\sigma}+} u\big\|_{L^2_{xy}}^{\sigma}\notag\\
		&\lesssim \big\|J^{\frac{3}{4(1-\sigma)}} u\big\|_{L^2_{xy}} +\big\|\langle (x,y)\rangle^{\frac{1}{4\beta\sigma}+} u\big\|_{L^2_{xy}}.\label{NL1a1a}
	\end{align}
Taking $\beta=\frac{5} {14}+$, $\sigma=\frac{7} {10}-$ such that  $\frac{3}{4(1-\sigma)}<\frac52$  and  $\frac{1}{4\beta\sigma}<1-$, then from \eqref{NL1a1a} we have
	\begin{align}
		\big\|\langle x\rangle^{\frac{1}{4\beta}+} u\big\|_{L^{p_0}_{T}L^4_{xy}}
		\lesssim  T^{\frac{1}{p_0}}\left(\big\|J^{\frac{3}{4(1-\sigma)}} u\big\|_{L^\infty_T L^2_{xy}} +\big\|\langle (x,y)\rangle^{\frac{1}{4\beta\sigma}+} u\big\|_{L^\infty_T L^2_{xy}}\right)\lesssim \|u\|_{X_T}. \label{NL1a1b}
	\end{align}

For $\|D_x^{3/2(1-\beta)+}u\|_{L^{p_0}_{T}L^4_{xy}}$, by using Strichartz estimates \eqref{ineq:strichartz1}  with $\theta=\frac12$,  $\varepsilon=1-$ and  $p_0=\frac{4}{1+\varepsilon/3}$ and Lemma \ref{lwplem},  we get
\begin{align}
	&\big\|D_x^{3/2(1-\beta)+}u\big\|_{L^{p_0}_{T}L^4_{xy}}=\big\|D_x^{\frac{7}{3}+}u\big\|_{L^{p_0}_{T}L^4_{xy}} \notag\\
	\lesssim& 	\big\|D_x^{\frac{7}{3}+}W(t)u_0\big\|_{L^{p_0}_{T}L^4_{xy}}+\left\|D_x^{\frac{7}{3}+}\int_0^T W(t-\tau)(uu_x)\mathrm{d}\tau\right\|_{L^{p_0}_{T}L^4_{xy}}\notag\\
	\lesssim & \big\|D_x^{\frac{25}{12}+} u_0\big\|_{L^2_{xy}}+\int_0^T\big\|D_x^{\frac{25}{12}+} (uu_x)\big\|_{L^2_{xy}}dt\notag\\
 	\lesssim  &\| u_0\|_{H^{5/2-}}+\|u\|_{X_T}^2.\label{NL1a1c}
\end{align}

Thus, \eqref{NL1a1b}, \eqref{NL1a1c} together with \eqref{NL1a1aa} yield 
	\begin{align}
		\big\|\langle x\rangle^{\frac{1}{4}+} D_x^{\frac32+}u\big\|_{L^{p_0}_{T}L^4_{xy}}
		\lesssim \| u_0\|_{H^{5/2-}}+\|u\|_{X_T}+\|u\|_{X_T}^2<\infty.\label{NL1a1A}
	\end{align}

On the other hand, using interpolation inequality  \eqref{lem:interpolaion1} again, we have
	\begin{align}
	\|\langle x\rangle^{\frac{1}{4}+} u_x\|_{L^{\frac{2p_0}{p_0-2}}_{T}L^4_{xy}} 
		\lesssim \big\|\langle x\rangle^{\frac{1}{4\beta}+} u\big\|_{L^{\frac{2p_0}{p_0-2}}_{T}L^4_{xy}} + \big\|D_x^{1/(1-\beta)}u\big\|_{L^{\frac{2p_0}{p_0-2}}_{T}L^4_{xy}}.\label{NL1bbbb1bb}
	\end{align}
It is easy to see that the first term on the right-hand side of \eqref{NL1bbbb1bb} can be controlled by $\|u\|_{X_T}$ as processed in \eqref{NL1a1b}. Moreover,
	\begin{align}
\big\|D_x^{1/(1-\beta)}u\big\|_{L^{\frac{2p_0}{p_0-2}}_{T}L^4_{xy}}=\big\|D_x^{14/9+}u\big\|_{L^{6-}_{T}L^4_{xy}}
\lesssim T^{\frac{1}{6}+}\big\|J^{3/4}D_x^{14/9+}u\big\|_{L^{\infty}_{T}L^2_{xy}}\lesssim \|u\|_{X_T}\nonumber
	\end{align}
which implies immediately that 
	\begin{align}
	\|\langle x\rangle^{\frac{1}{4}+} u_x\|_{L^{\frac{2p_0}{p_0-2}}_{T}L^4_{xy}} 
		\lesssim \|u\|_{X_T}.\label{NL1bbbb1bb0}
	\end{align}

Combining  \eqref{NL1a1A}, \eqref{NL1bbbb1bb0} and  \eqref{NL1a}, we obtain
$$NL_1\lesssim \|u\|_{X_T}\left(\| u_0\|_{H^{5/2-}}+\|u\|_{X_T}+\|u\|_{X_T}^2\right)<\infty.$$

\noindent
$\bullet$ {\bf Estimate for  $NL_2$.}  
\vspace{1mm}

By H{\"o}lder's inequality, we see that
	\begin{align}
		NL_2\leqslant \big\|\langle x\rangle^{\frac{1}{2}+}u\big\|_{L^{2}_{x}L^{\infty}_{yT}}\big\|D_x^{\frac32+}u_x\big\|_{L^{\infty}_{x}L^2_{yT}}. \label{NL2a}
	\end{align}

Let us  consider the first term on the right-hand side of \eqref{NL2a}. By applying Duhamel principle, one gets
$$\left<x\right>^{\frac{1}{2}+}u=\left<x\right>^{\frac{1}{2}+}W(t)u_0-\left<x\right>^{\frac{1}{2}+}\int^t_0 W(t-t')(u\partial_x u)(t')\mathrm{d}t'.$$
Then, using the identity \eqref{eq:weight_x}, \eqref{lemlem2b} and the maximal function estimate \eqref{linear estimate:maximal}, we deduce that
	\begin{align}
	\big\|\langle x\rangle^{\frac{1}{2}+}u\big\|_{L^{2}_{x}L^{\infty}_{yT}}&\lesssim 
	\big\|\langle x\rangle^{\frac{1}{2}+}W(t)u_0\big\|_{L^{2}_{x}L^{\infty}_{yT}}+\left\|\left<x\right>^{\frac{1}{2}+}\int^t_0 W(t-t')(u\partial_x u)(t')\mathrm{d}t'\right\|_{L^{2}_{x}L^{\infty}_{yT}} \notag\\
&\lesssim \big\|J^{1+}\langle x\rangle^{\frac{1}{2}+}u_0\big\|_{L^{\infty}_{T}L^{2}_{xy}}+\int^T_0\big\|J^{1+}\langle x\rangle^{\frac{1}{2}+}uu_x\big\|_{L^{2}_{xy}}dt+\|u_0\|_{H^{2+}_{xy}}\notag\\
&\hspace{5mm} +\|u_0\|_{L^{2}_{y}L^{1+}_x}+\int^T_0\|uu_x\|_{H^{2+}_{xy}}dt+\int^T_0\|uu_x\|_{L^{2}_{y}L^{1+}_x}dt. \label{NL2a1}
	\end{align}

From interpolation inequality  \eqref{lem:interpolaion2}, it follows that
\begin{align}
\big\|J^{1+}\langle x\rangle^{\frac{1}{2}+}u_0\big\|_{L^{2}_{xy}}\lesssim  \big\|\langle x\rangle^{\frac{3}{2}+}u_0\big\|^{1/3}_{L^{2}_{xy}}\big\|J^{3/2+}u_0\big\|^{2/3}_{L^{2}_{xy}}\lesssim \|u_0\|_{Z_{s,(r_1, r_2)}}\label{NL2a1a1}
	\end{align}
and 
\begin{align}
\int^T_0\big\|J^{1+}\langle x\rangle^{\frac{1}{2}+}uu_x\big\|_{L^{2}_{xy}}dt&\lesssim  \int^T_0\big\|\langle x\rangle^{\frac{3}{2}+}uu_x\big\|^{1/3}_{L^{2}_{xy}}\big\|J^{3/2+}uu_x\big\|^{2/3}_{L^{2}_{xy}}dt \notag\\
&\lesssim  \int^T_0\big\|\langle x\rangle^{\frac{3}{2}+}uu_x\big\|_{L^{2}_{xy}}dt+ \int^T_0\big\|J^{3/2+}uu_x\big\|_{L^{2}_{xy}}dt \notag\\ 
&\lesssim  T^{1/2}\big\|\langle x\rangle^{\frac{3}{2}+}u\big\|_{L^{\infty}_{T}L^{2}_{xy}} \|u_x\|_{L^{2}_{T}L^{\infty}_{xy}}+\|u\|^2_{X_T}  \notag\\ 
&\lesssim\|u\|^2_{X_T} \label{NL2a1a2}
	\end{align}
for $s=5/2-$ and $r_1=r_2=1-$.
Moreover,
\begin{align}
\|u_0\|_{L^{2}_{y}L^{1+}_x}\lesssim \big\|\langle x\rangle^{\frac{1}{2}+}u_0\big\|_{L^{2}_{xy}}\lesssim 
\|u_0\|_{Z_{s,(r_1, r_2)}}
\label{NL2a1a3}
	\end{align}
and 
\begin{align}
\int^T_0\|uu_x\|_{L^{2}_{y}L^{1+}_x}dt\lesssim \int^T_0\big\|\langle x\rangle^{\frac{1}{2}+}uu_x\big\|_{L^{2}_{xy}}dt\lesssim\|u\|^2_{X_T}.
\label{NL2a1a4}
	\end{align}

Then, from \eqref{NL2a1}-\eqref{NL2a1a4}, we get
	\begin{align}
	\big\|\langle x\rangle^{\frac{1}{2}+}u\big\|_{L^{2}_{x}L^{\infty}_{yT}}\lesssim 
\|u_0\|_{Z_{s,(r_1, r_2)}}+	\|u\|^2_{X_T}. \label{NL2A1}
	\end{align}

In the next place, we consider the second term on the right-hand side of \eqref{NL2a}. From Duhamel's principle and local smoothing estimate \eqref{linear estimate:Kato-3}, one easily sees that
\begin{align}
\big\|D_x^{\frac32+}u_x\big\|_{L^{\infty}_{x}L^2_{yT}}\lesssim& \big\|D_x^{\frac52+}W(t)u_0\big\|_{L^{\infty}_{x}L^2_{yT}}
+\left\|D_x^{\frac52+}\int^t_0 W(t-t')(u\partial_x u)(t')\mathrm{d}t'\right\|_{L^{\infty}_{x}L^2_{yT}} \notag\\ 
\lesssim&\|u_0\|_{H^{\frac{3}{2}+}}+\big\|D_x^{\frac{1}{2}+}(uu_x)\big\|_{L^{1}_{x}L^2_{yT}}.
 \nonumber
	\end{align}
Note that
\begin{align}
	& \big\|D_x^{\frac{1}{2}+}(uu_x)\big\|_{L^1_xL^2_{yT}}
\lesssim  \big\|\left<x\right>^{\frac{1}{2}+}D_x^{\frac{1}{2}+}(uu_x)\big\|_{L^2_{xyT}}\notag \\
	\lesssim& \big\|\left<x\right>^{\frac{1}{2}+}[D_x^{\frac{1}{2}+},u]u_x \big\|_{L^2_{xyT}}+\big\|\left<x\right>^{\frac{1}{2}+}uD_x^{\frac{1}{2}+}u_x\big\|_{L^2_{xyT}}
:= \widetilde{NL}_1+\widetilde{NL}_2.	\nonumber
\end{align}
Proceeding in the same manner as above, we obtain
$$\widetilde{NL}_1\lesssim \|u\|_{X_T}\left(\| u_0\|_{H^{5/2-}}+\|u\|_{X_T}+\|u\|_{X_T}^2\right)<\infty$$
and 
\begin{align}
		\widetilde{NL}_2\lesssim \big\|\langle x\rangle^{\frac{1}{2}+}u\big\|_{L^{\infty}_{T}L^{2}_{xy}}\big\|D_x^{\frac32+}u\big\|_{L^{2}_{T}L^{\infty}_{xy}}\lesssim\|u\|_{X_T}^2. \nonumber
	\end{align}
These two estimates  further yield
\begin{align}
	\big\|D_x^{\frac32+}u_x\big\|_{L^{\infty}_{x}L^2_{yT}}\lesssim\| u_0\|_{H^{5/2-}}+
\|u\|_{X_T}\left(\| u_0\|_{H^{5/2-}}+\|u\|_{X_T}+\|u\|_{X_T}^2\right)<\infty. \label{NL2aNL2a}
\end{align}
 By \eqref{NL2A1} and  \eqref{NL2aNL2a}, we deduce that $NL_2$ can be controlled. Consequently, we obtain the required bound for $\big\|D^{\frac52+}_x z_1\big\|_{L^2_{xy}}$.

Now we consider $\big\|D_{y_j}^{\frac{5}{2}+}z_1\big\|_{L^2_{xy}}$.  In view of  the dual version of the smoothing effect \eqref{linear estimate:Kato-2} and the  fractional Leibniz’s rule \eqref{lem:Leibniz-12}, one gets
\begin{align}
	&\left\|D_{y_j}^{\frac{5}{2}+}\int^t_0 W(t-t')(u\partial_x u)(t')\mathrm{d}t'\right\|_{L^2_{xy}}\notag \\
	\lesssim&  \big\|D_{y_j}^{\frac{3}{2}+}(u\partial_x u)\big\|_{L^1_xL^2_{yT}}\notag \\
\lesssim&  \big\|\left<x\right>^{\frac{1}{2}+}D_{y_j}^{\frac{3}{2}+}(u\partial_x u)\big\|_{L^2_{xyT}}\notag \\
\lesssim & \big\|\left<x\right>^{\frac{1}{2}+}D_{y_j}^{\frac{1}{2}+}(u_{y_j}u_x )\big\|_{L^2_{xyT}}
+\big\|\left<x\right>^{\frac{1}{2}+}D_{y_j}^{\frac{1}{2}+}(u_{xy_j}u )\big\|_{L^2_{xyT}}
\notag \\
	\lesssim& \big\|D_{y_j}^{\frac{3}{2}+}u \big\|_{L_T^{\infty}L^2_{xy}}\big\|\left<x\right>^{\frac{1}{2}+}u_x \big\|_{L_T^2L^{\infty}_{xy}}+\big\|D_{y_j}^{\frac{1}{2}+}u_x \big\|_{L_T^2L^{\infty}_{xy}}\big\|\left<x\right>^{\frac{1}{2}+}u_{y_j} \big\|_{L^{\infty}_{T}L_{xy}^2}\notag \\
&\hspace{1mm}+\big\|\left<x\right>^{\frac{1}{2}+}D_{y_j}^{\frac{1}{2}+}u \big\|_{L_T^{\infty}L^2_{xy}}\|u_{xy_j} \|_{L_T^2L^{\infty}_{xy}}+\big\|\left<x\right>^{\frac{1}{2}+}uD_{y_j}^{\frac{3}{2}+}u_x\big\|_{L^2_{xyT}}.\label{nonlinddyyy}
\end{align}

It is easy to see that 
$$\big\|D_{y_j}^{\frac{3}{2}+}u \big\|_{L_T^{\infty}L^2_{xy}}, \hspace{2mm} \big\|D_{y_j}^{\frac{1}{2}+}u_x \big\|_{L_T^2L^{\infty}_{xy}}, \hspace{2mm}\text{and} \hspace{2mm}\|u_{xy_j} \|_{L_T^2L^{\infty}_{xy}}$$
 can be controlled by $\|u\|_{X_T}$ with $5/2-$ regularity from the local theory. In addition,
$$\big\|\left<x\right>^{\frac{1}{2}+}D_{y_j}^{\frac{1}{2}+}u \big\|_{L_T^{\infty}L^2_{xy}}  \hspace{4mm}\text{and} \hspace{4mm} \big\|\left<x\right>^{\frac{1}{2}+}u_{y_j} \big\|_{L^{\infty}_{T}L_{xy}^2}$$
are bounded via using interpolation inequality  \eqref{lem:interpolaion1} (one can refer to  \eqref{NL2a1a1} for more details). And, we  can  control 
$$\big\|\left<x\right>^{\frac{1}{2}+}uD_{y_j}^{\frac{3}{2}+}u_x\big\|_{L^2_{xyT}}$$
in the same manner as \eqref{NL2a}.

All that is left is to control $\big\|\left<x\right>^{\frac{1}{2}+}u_x \big\|_{L_T^2L^{\infty}_{xy}}$.  To this end, noting that 
\begin{align}\big\|\left<x\right>^{\frac{1}{2}+}u_x \big\|_{L_T^2L^{\infty}_{xy}}&\leq \big\|\partial_x\big(\left<x\right>^{\frac{1}{2}+}u\big)\big\|_{L_T^2L^{\infty}_{xy}}+\big\|\left<x\right>^{-\frac{1}{2}+}u \big\|_{L_T^2L^{\infty}_{xy}}\notag \\
&\leq \big\|\partial_x\big(\left<x\right>^{\frac{1}{2}+}u\big)\big\|_{L_T^2L^{\infty}_{xy}}+\big\|u \big\|_{L_T^2L^{\infty}_{xy}}\notag \\
&\leq \big\|\partial_x\big(\left<x\right>^{\frac{1}{2}+}u\big)\big\|_{L_T^2L^{\infty}_{xy}}+T^{1/2}\|u \|_{L_T^{\infty}H^{5/2-}_{xy}},\nonumber
\end{align}
we only need to estimate $ \big\|\partial_x\big(\left<x\right>^{\frac{1}{2}+}u\big)\big\|_{L_T^2L^{\infty}_{xy}}$.
By applying Duhamel's principle, Lemma \ref{lem:weightZK}, Lemma \ref{lem:weight-error} and Corollary \ref{linear estimate: strichartz cor}, we get
\begin{align}
&\big\|\partial_x\big(\left<x\right>^{\frac{1}{2}+}u\big)\big\|_{L_T^2L^{\infty}_{xy}}\notag \\
\lesssim&
\big\|\partial_x\big(\left<x\right>^{\frac{1}{2}+}W(t)u_0\big)\big\|_{L_T^2L^{\infty}_{xy}}+\int_0^T\big\|\partial_x\left<x\right>^{\frac{1}{2}+}W(t-t')(uu_x) \big\|_{L_T^2L^{\infty}_{xy}}dt'\notag \\
\lesssim& \big\|J^{1+}\left<x\right>^{\frac{1}{2}+}u_{0} \big\|_{L^{2}_{xy}}+\int_0^T\big\|J^{0+}\left<x\right>^{\frac{1}{2}+}\partial_x(uu_x) \big\|_{L^{2}_{xy}}dt+ \|u_0\|_{H^{2+}_{xy}} \notag \\
&\hspace{2mm} +\|u_0\|_{L^{2}_{y}L^{1+}_x}+\int^T_0\|uu_x\|_{H^{2+}_{xy}}dt+\int^T_0\|uu_x\|_{L^{2}_{y}L^{1+}_x}dt\notag \\
\lesssim& \|u_0\|_{Z_{s,(r_1, r_2)}}+	\|u\|^2_{X_T}.\label{dy52last}
\end{align}
The veracity of the last inequality follows readily from the same strategy as \eqref{NL2a1}. Collecting \eqref{nonlinddyyy}-\eqref{dy52last}, we deduce that $\big\|D_{y_j}^{\frac{5}{2}+}z_1\big\|_{L^2_{xy}}$ is bounded. 

Hence,  Theorem \ref{ZKnonlinSmooth} is proved.\hspace{89mm}$\square$

\subsection{Nonlinear smoothing for $k\geq2$}

\textbf{Proof of Theorem \ref{gZKDuhamelkg2}. } The arguments utilized to show this theorem  are
the local smoothing effect and maximal function estimates. We only deal with the case $s=2$, because  techniques we used here are applicable to larger $s$. Observe that $z_k\in L^2(\mathbb{R}^2)$, it suffices to control the $L^2$ norms of 
$$\partial_x^3\int_0^t W(t-t')u^k\partial_xu(t')dt' \hspace{5mm} \text{and}  \hspace{5mm} \partial_{y_j}^3\int_0^t W(t-t')u^k\partial_xu(t')dt'.$$ 

In view of ﻿the dual version of the smoothing effect \eqref{linear estimate:Kato-2} and H\"older's inequality, we get
\begin{align}
 &\left\|\partial_x^3\int_0^t W(t-t')u^k\partial_xu(t')dt' \right\|_{L^2_{xy}} 
\lesssim   \|\partial_x^2(u^k\partial_xu)\|_{L^1_xL^2_{yT}} \notag \\
\lesssim & \|u^ku_{xxx}\|_{L^1_xL^2_{yT}} + \|u^{k-1}u_{x}u_{xx}\|_{L^1_xL^2_{yT}}+ \|u^{k-2}u^3_{x}\|_{L^1_xL^2_{yT}}\notag \\
\lesssim & \|u\|^k_{L^{k}_xL^{\infty}_{yT}}\|u_{xxx}\|_{L^{\infty}_xL^2_{yT}} +  \|u\|^{k-1}_{L^{2(k-1)}_xL^{\infty}_{yT}}\|u_x\|_{L^{2}_xL^{\infty}_{yT}}\|u_{xx}\|_{L^{\infty}_xL^2_{yT}}+ \|u^{k-2}u^3_{x}\|_{L^1_xL^2_{yT}}\notag \\
=&T_1+T_2+T_3.\nonumber
\end{align}
 Firstly, let us consider $T_1$ and $T_2$. By using Sobolev embedding theorem, one has
$$\|u\|_{L^{k}_xL^{\infty}_{yT}}\lesssim \|J^{1/2-1/k}u\|_{L^{2}_xL^{\infty}_{yT}}$$
and 
$$\|u\|_{L^{2(k-1)}_xL^{\infty}_{yT}}\lesssim \|J^{1/2-1/2(k-1)}u\|_{L^{2}_xL^{\infty}_{yT}}.$$
It follows from the local well-posedness theory that 
$$\|u\|_{L^{k}_xL^{\infty}_{yT}}, \hspace{3mm} \|u\|_{L^{2(k-1)}_xL^{\infty}_{yT}}, \hspace{3mm}\|u_x\|_{L^{2}_xL^{\infty}_{yT}}, \hspace{3mm} \|u_{xx}\|_{L^{\infty}_xL^2_{yT}}, \hspace{3mm}\text{and}  \hspace{3mm}\|u_{xxx}\|_{L^{\infty}_xL^2_{yT}}$$
are bounded. Hence, we control $T_1$ and $T_2$.

Secondly, we consider $T_3$. If $k=2$, by H\"older's inequality, one obtains
$$T_3=\|u^3_{x}\|_{L^1_xL^2_{yT}}\lesssim \|u_x\|^2_{L^{2}_xL^{\infty}_{yT}}\|u_{x}\|_{L^{\infty}_xL^2_{yT}}$$
which is bounded via the local well-posedness theory. If $k\geq3$, we have
$$T_3=\|u^{k-2}u^3_{x}\|_{L^1_xL^2_{yT}}\lesssim \|u\|^{k-2}_{L^{2(k-2)}_xL^{\infty}_{yT}}\|u^3_{x}\|_{L^2_{xyT}}\lesssim \|u\|^{k-2}_{L^{2(k-2)}_xL^{\infty}_{yT}}\|u_{x}\|_{L_T^{2}L^{\infty}_{xy}}\|u_{x}\|^2_{L^{\infty}_TL^4_{xy}}$$
that is bounded by \eqref{linear estimate: strichartz cor1}, Sobolev embedding theorem and the local well-posedness theory.

Now we turn to estimate 
$$\partial_{y_j}^3\int_0^t W(t-t')u^k\partial_xu(t')dt'.$$ 
Using ﻿the dual version of the smoothing effect \eqref{linear estimate:Kato-2} again, we see that
\begin{align}
 &\left\|\partial_{y_j}^3\int_0^t W(t-t')u^k\partial_xu(t')dt' \right\|_{L^2_{xy}}
\lesssim  \|\partial_{y_j}^2(u^k\partial_xu)\|_{L^1_xL^2_{yT}} \notag \\
\lesssim & \|u^ku_{xy_jy_j}\|_{L^1_xL^2_{yT}} + \|u^{k-1}u_{xy_j}u_{y_j}\|_{L^1_xL^2_{yT}}+\|u^{k-1}u_{y_jy_j}u_{x}\|_{L^1_xL^2_{yT}}+ \|u^{k-2}u^2_{y_j}u_{x}\|_{L^1_xL^2_{yT}}\notag \\
\lesssim & \|u\|^k_{L^{k}_xL^{\infty}_{yT}}\|u_{xy_jy_j}\|_{L^{\infty}_xL^2_{yT}} +  \|u\|^{k-1}_{L^{2(k-1)}_xL^{\infty}_{yT}}\|u_{y_j}\|_{L^{2}_xL^{\infty}_{yT}}\|u_{xy_j}\|_{L^{\infty}_xL^2_{yT}}\notag \\
&\hspace{3mm}+\|u\|^{k-1}_{L^{2(k-1)}_xL^{\infty}_{yT}}\|u_{x}\|_{L^{2}_xL^{\infty}_{yT}}\|u_{y_jy_j}\|_{L^{\infty}_xL^2_{yT}}
+ \|u^{k-2}u^2_{y_j}u_{x}\|_{L^1_xL^2_{yT}}.\nonumber
\end{align}
By analogy with what has gone before, one can deduce that 
$$\partial_{y_j}^3\int_0^t W(t-t')u^k\partial_xu(t')dt'$$ 
is bounded. This completes the proof of the theorem. \hspace{56mm}$\square$

\section*{Acknowledgments}
 M.S is partially supported by the NSFC, Grant No. 12101629. The authors  would like to express their deep gratitude  to Professor Chenjie Fan from Academy of Mathematics and Systems Science, Chinese Academy of Sciences, for useful discussion on the construction of linear dispersive blow up solution.  Moreover, his helpful comments have improved the presentation of this paper. 

\begin{thebibliography}{99}	


  \bibitem{1972_benjamin}   T. B.  Benjamin, J. L. Bona and J. J. Mahony.
		\newblock    Model equations for long waves in nonlinear, dispersive media.
		\newblock \emph{ Philos. Trans. R. Soc. Lond. Ser. A} \textbf{272} (1972) 47--78

  \bibitem{2020_Bhattacharya} D. Bhattacharya, L. G. Farah and S. Roudenko.
		\newblock  Global well-posedness for low regularity data in the 2d modified Zakharov-Kuznetsov equation.
		\newblock \emph{Journal of Differential Equations} \textbf{268}(12) (2020) 7962--7997

  \bibitem{1993_Bona_Dispersive_gKdV} J. L. Bona   and J.-C. Saut.
		\newblock  Dispersive Blowup of Solutions of Generalized Korteweg-de Vries Equations.
		\newblock \emph{Journal of Differential Equations} \textbf{103}(1993) 3--57


  \bibitem{2010_Bona_dispersive_Schroinger}   J. L. Bona   and J.-C. Saut.
		\newblock  Dispersive blow-up II. Schr\"odinger-type equations, optical and oceanic rogue waves.
		\newblock \emph{Chin. Ann. Math., Ser. B} \textbf{31}(2010) 793--810



  \bibitem{2016_Bustamante} E. Bustamante, J. Jimenez and J. Mejia.
		\newblock    The Zakharov-Kuznetsov equation in weighted Sobolev spaces.
		\newblock \emph{Journal of Mathematical Analysis and Applications} \textbf{433}(1) (2016) 149--175

  \bibitem{Cassels57} Cassels.
		\newblock    An introduction to Diophantine approximation.
		\newblock \emph{ Cambridge Tracts in Mathematics and Mathematical Physics} \textbf{45}  Cambridge University Press, New York, 1957



  \bibitem{2016_Cruz-Uribe_KatoPonce}  D. Cruz-Uribe and V. Naibo.
		\newblock   Kato-Ponce inequalities on weighted and variable Lebesgue spaces.
		\newblock \emph{Differential and Integral Equations} \textbf{29}(1) (2016) 801--836


  \bibitem{DGG17}  Y. Deng, P. Germain and L. Guth.
		\newblock   Strichartz estimates for the Schr\"odinger equation on irrational tori.
		\newblock \emph{J. of Func. Anal.} \textbf{273}(9) (2017) 2846--2869


 \bibitem{1995_Faminskii} A. Faminskii.
		\newblock    The Cauchy problem for the Zakharov-Kuznetsov equation.
		\newblock \emph{Differential Equations} \textbf{ 31}(1995) 1002--1012

  \bibitem{2012_FARAH}   L. G. Farah, F. Linares and A. Pastor.
		\newblock   A note on the 2D generalized Zakharov-Kuznetsov equation: Local, global, and scattering results.
		\newblock \emph{Journal of Differential Equations} \textbf{253}(8) (2012) 2558--2571

 \bibitem{FLP15} G. Fonseca, F. Linares and  G. Ponce.
		\newblock On persistence properties in fractional weighted spaces.
		\newblock \emph{Proc. Amer. Math. Soc.} \textbf{143}(12)  (2015) 5353--5367

 \bibitem{2016_Fonseca} G. Fonseca and M. Pachon.
		\newblock Well-posedness for the two dimensional generalized Zakharov-Kuznetsov equation in anisotropic weighted Sobolev spaces.
		\newblock \emph{Journal of Mathematical Analysis and Applications} \textbf{433}(1) (2016) 566--584

 

 \bibitem{2014_Grunrock_3DmZK} A. Gr\"unrock.
		\newblock  A remark on the modified Zakharov-Kuznetsov equation in three space dimensions.
		\newblock \emph{Mathematical Research Letters} \textbf{21}(1) (2014) 127--131

\bibitem{2015_Grunrock} A. Gr\"unrock.
		\newblock  On the generalized Zakharov-Kuznetsov equation at critical regularity.
		\newblock \emph{arXiv}: 1509.09146v1, 2015


 \bibitem{2014_Grunrock_2dZK}  A. Gr\"unrock and S. Herr.
		\newblock  The Fourier restriction norm method for the Zakharov-Kuznetsov equation.
		\newblock \emph{Discrete and Continuous Dynamical Systems } \textbf{34}(5) (2014) 2061--2068

        \bibitem{2013_Han_deduce_zk}  D. Han-Kwan.
		\newblock  From Vlasov-Poisson to Korteweg-de Vries and Zakharov-Kuznetsov.
		\newblock \emph{Communications in Mathematical Physics} \textbf{324}(2013)  961--993


  \bibitem{2016_Hong_dispersive_NLS}  Y. Hong and M. Taskovic.
		\newblock    On dispersive blow-ups for the nonlinear Schr\"odinger equation.
		\newblock \emph{Differ. Integral Equ.} \textbf{29}(2016) 875--888

  \bibitem{2016_Huo_3dZK} Z.  Huo, Y. Jia and Q.  Li.
		\newblock    Global well-posedness for the 3D Zakharov-Kuznetsov equation in energy space $H^1$.
		\newblock \emph{Discrete and Continuous Dynamical Systems-S} \textbf{9}(6) (2016) 1797-1851


 \bibitem{2017_Kato} T.  Kato.
		\newblock   Well-Posedness for the generalized Zakharov-Kuznetsov equation on modulation spaces.
		\newblock \emph{Journal of Fourier Analysis and Applications} \textbf{23}(3)  (2017) 612--655


  \bibitem{2018_KATO} T.  Kato.
		\newblock  The Cauchy problem for the generalized Zakharov-Kuznetsov equation on modulation spaces.
		\newblock \emph{Journal of Differential Equations} \textbf{264}(5) (2018) 3402--3444


  \bibitem{KPV1993} C.E. Kenig, G. Ponce and L. Vega.
		\newblock  Well-posedness and scattering results for the generalized Korteweg–de Vries equation via the contraction principle
		\newblock \emph{Commun. Pure Appl. Math.} \textbf{46}  (1993) 527--620


  \bibitem{2021-1_KINOSHITA} S. Kinoshita.
		\newblock  Global well-posedness for the Cauchy problem of the Zakharov-Kuznetsov equation in 2D.
		\newblock \emph{Annales de l'Institut Henri Poincar\'{e} C Analyse nonlin\'{e}aire} \textbf{38}(2) (2021) 451--505


  \bibitem{KoTa05} H. Koch and D. Tataru.
		\newblock  Dispersive estimates for principally normal pseudodifferential operators.
		\newblock \emph{Comm. Pure Appl. Math.} \textbf{58}(2) (2005) 217--284 



        \bibitem{2013_Lannes_deduce_of_zk} D. Lannes, F. Linares and J.-C. Saut.
		\newblock  The Cauchy problem for the Euler-Poisson system and derivation of the Zakharov-Kuznetsov equation. In: M. Cicognani, F. Colombini and D. Del Santo (eds)
		\newblock \emph{Studies in Phase Space Analysis with Applications to PDEs.} Progress
in Nonlinear Differential Equations and Their Applications (\textbf{84}). New York: Birkh\"auser/Springer, p. 181--213, 2013



  \bibitem{2019_LinaPala} F. Linares and J. Palacios.
		\newblock    Dispersive blow-up and persistence properties for the Schr\"odinger–Korteweg–de Vries system.
		\newblock \emph{Nonlinearity} \textbf{32} (2019) 4996--5016




  \bibitem{2009_Linares_2dZK_mZK} F. Linares and A. Pastor.
		\newblock    Well-posedness for the two-dimensional modified Zakharov-Kuznetsov equation.
		\newblock \emph{SIAM Journal on Mathematical Analysis} \textbf{41}(4) (2009) 1323--1339


  \bibitem{2011Linares2dgZK} F. Linares and A. Pastor.
		\newblock    Local and global well-posedness for the 2D generalized Zakharov-Kuznetsov equation.
		\newblock \emph{J. Funct. Anal.} \textbf{ 260} (2011) 1060--1085


  \bibitem{2020_Linares_blowupforZK} F. Linares, A. Pastor, and J. D. Silva.
		\newblock Dispersive blow-up for solutions of the Zakharov-Kuznetsov equation.
		\newblock \emph{Annales de l'Institut Henri Poincar\'{e} C Analyse nonlin\'{e}aire} \textbf{38}(2) (2021) 281--300

\bibitem{2017_Linares} F. Linares, G. Ponce and D. Smith.
		\newblock   On the regularity of solutions to a class of nonlinear dispersive equations.
		\newblock \emph{Mathematische Annalen} \textbf{369}(1) (2017) 797--837

  \bibitem{2009_Linares_3dZK} F. Linares  and J.-C. Saut.
		\newblock  The Cauchy problem for the 3D Zakharov-Kuznetsov equation.
		\newblock \emph{Discrete and Continuous Dynamical Systems} \textbf{24}(2009) 547--565


\bibitem{1993_LinaScia} F. Linares and M. Scialom.
		\newblock  On the smoothing properties of solutions to the modified Korteweg-de Vries equation.
		\newblock \emph{J. Differ. Equ.} \textbf{106}(1993) 141--154



\bibitem{2015_Molinet_2dZK}  L. Molinet and D. Pilod.
		\newblock  Bilinear Strichartz estimates for the Zakharov-Kuznetsov equation and applications.
		\newblock \emph{Annales de l'Institut Henri Poincar\'{e} C Analyse nonlin\'{e}aire} \textbf{32} (2015)  347--371


  \bibitem{2012_Ribaud_2dgZK} F. Ribaud and S. Vento.
		\newblock   A note on the Cauchy problem for the 2D generalized Zakharov-Kuznetsov equations.
		\newblock \emph{C. R. Acad. Sci. Paris} \textbf{350}(9) (2012) 499--503

 \bibitem{2012_Ribaud_3dZK} F. Ribaud and  S. Vento.
		\newblock  Well-posedness results for the three-dimensional Zakharov-Kuznetsov equation.
		\newblock \emph{SIAM Journal on Mathematical Analysis} \textbf{44}(4) (2012) 2289--2304


\bibitem{Shan20a} M. Shan.
\newblock Well-posedness for the two-dimensional Zakharov-Kuznetsov equation,
	\newblock \emph{Funkc. Ekvacioj } \textbf{63}(1) (2020) 67--95.

\bibitem{Shan20b} M. Shan.
\newblock Global well-posedness and global attractor for two-dimensional Zakharov-Kuznetsov equation,
	\newblock \emph{Acta Math. Sin. Engl. Ser. } \textbf{36}(9) (2020) 969--1000.


\bibitem{SWZ2023} M. Shan, B. Wang and L. Zhang.
\newblock Resonant decompositions and global well-posedness for 2D Zakharov-Kuznetsov equation in Sobolev spaces of negative indices,
	\newblock \emph{Journal of Differential Equations} \textbf{355} (2023) 386--436

  \bibitem{Stein_HarmonicAnalysis}  E. M. Stein.
		\newblock  Harmonic analysis: real-variable methods, orthogonality, and oscillatory integrals.
		\newblock \emph{Princeton University Press}, 2006



 \bibitem{1974_ZK}V. E. Zakharov and  E. A. Kuznetsov.
		\newblock On three-dimensional solitons.
		\newblock \emph{Sov. Phys. JETP} \textbf{39} (1974), 285--286
\end{thebibliography}

\footnotesize

\end{document}